\newcommand{\p}{\partial}
\newcommand{\R}{{\mathbb{R}}}
\newcommand{\N}{{\mathbb{N}}}
\newcommand{\supp}{{\mathrm{supp}\,}}
\renewcommand{\doteq}{{\mathrm{:=}}}
\renewcommand{\log}{\ln}
\newcommand{\<}{\langle}
\renewcommand{\>}{\rangle}
\newcommand{\lin}{{\mathrm{lin}}}
\newcommand{\n}[1]{\left\|#1\right\|_{L^\infty_tL^\infty_r}}
\newtheorem{theorem}{Theorem}
\newtheorem{prop}{Proposition}
\newtheorem{lemma}{Lemma}
\newtheorem{definition}{Definition}
\theoremstyle{remark}
\newtheorem{remark}{Remark}
\title{From~$p_0(n)$ to~$p_0(n+2)$}
\begin{document}
\baselineskip13pt

\author{M. D'Abbicco, S. Lucente, M. Reissig}

\address{Marcello D'Abbicco, Departamento de Computa\c{c}\~{a}o e Matem\'atica, Universidade de S\~{a}o Paulo (USP), FFCLRP, Av. dos Bandeirantes, 3900, CEP 14040-901, Ribeir\~{a}o Preto - SP - Brasil}

\address{Sandra Lucente, Department of Mathematics, University of Bari, Via E. Orabona 4 - 70125 BARI - ITALY}

\address{Michael Reissig, Faculty for Mathematics and Computer Science, Technical University Bergakademie Freiberg, Pr\"uferstr.9 - 09596 FREIBERG - GERMANY}

\thanks{The first author is supported by Funda\c{c}\~{a}o de Amparo \`{a} Pesquisa do Estado de S\~{a}o Paulo, grants
2013/15140-2 and 2014/02713-7, JP - Programa Jovens Pesquisadores em Centros Emergentes, research project \emph{Decay estimates for semilinear hyperbolic equations}. The first two authors are member of the Gruppo Nazionale per l'Analisi Matematica, la Probabilit\`a e le loro Applicazioni (GNAMPA) of the Istituto Nazionale di Alta Matematica (INdAM). The second author has been supported by GNAMPA}

\begin{abstract}
In this note we study the global existence of small data solutions to the Cauchy problem for the semi-linear wave equation with a not \emph{effective
scale-invariant} damping term, namely
\[ v_{tt}-\triangle v + \frac2{1+t}\,v_t = |v|^p, \qquad v(0,x)=v_0(x),\quad v_t(0,x)=v_1(x), \]
where $p>1$, $n\ge 2$. We prove blow-up in finite time in the subcritical range~$p\in(1,p_2(n)]$ and an existence result for $p>p_2(n)$, $n=2,3$. In this
way we find the critical exponent for small data solutions to this problem. All these considerations lead to the conjecture $p_2(n)=p_0(n+2)$ for $n\ge2$,
where $p_0(n)$ is the Strauss exponent for the classical wave equation.
\end{abstract}

\maketitle

\section{Introduction}

In this paper we study the global existence (in time) of small data solutions to
\begin{equation}\label{eq:CP2}
\begin{cases}
v_{tt}-\triangle v + \frac2{1+t}\,v_t = |v|^p, & t\geq0, \quad x\in\R^n,\\
v(0,x)=v_0(x), & x\in\R^n,\\
v_t(0,x)=v_1(x), & x\in\R^n,
\end{cases}
\end{equation}
in space dimensions~$n=2,3$. The damping term of this model is not effective (see \cite{Wirth}). Nevertheless, there should be an improving influence on
the critical exponent $p_{crit}$ in comparison with Strauss exponent $p_0(n)$ for
\begin{equation}\label{eq:CP2000}
\begin{cases}
w_{tt}-\triangle w = |w|^p, & t\geq0, \quad x\in\R^n,\\
w(0,x)=w_0(x), & x\in\R^n,\\
w_t(0,x)=w_1(x), & x\in\R^n,
\end{cases}
\end{equation}
here $p_0(n)$ is the positive solution to \[  (n-1)\,p^2 - (n+1)\,p -2 = 0.\]
By \emph{critical exponent} $p_{crit}$ for ~\eqref{eq:CP2} or ~\eqref{eq:CP2000}
we mean, that for small initial data in a suitable space, there exist global (in time) solutions if~$p>p_{crit}$, and there exist suitable (even
small) data such that there exist no global (in time) solutions if~$p\in(1,p_{crit}]$.

It has been recently shown that the \emph{critical exponent} for models with effective dissipation, this means, $\mu$ is sufficiently large (see
\cite{Wirth})
\begin{equation}\label{eq:CP}
\begin{cases}
v_{tt}-\triangle v + \frac\mu{1+t}\,v_t = |v|^p, & t\geq0, \quad x\in\R^n,\\
v(0,x)=v_0(x), & x\in\R^n,\\
v_t(0,x)=v_1(x), & x\in\R^n,
\end{cases}
\end{equation}
is~$p_{crit}=1+2/n$ (see Section \ref{Sec.overview} for details). The exponent~$1+2/n$ is the same as for the semi-linear heat equation, and it is related
to the \emph{effectiveness} of the damping, i.e., the property of the damping term to make suitable linear estimates for the wave equation similar to the
ones for the corresponding heat equation~$\mu v_t-(1+t)\triangle v=0$ (in particular, the $L^1-L^p$ low frequencies estimates). We set
\begin{equation*}
p_\infty(n)=1+2/n,
\end{equation*}
where the index~$\infty$ means that~$\mu$ is sufficiently large.

On the contrary, it seems to be difficult to show that for small positive values of~$\mu$, for example, $\mu=2$ the \emph{critical exponent}~$p_\mu(n)$ is
strictly larger than $p_\infty$. Summarizing all these explanations one would expect
\[ p_\infty(n) \leq p_\mu(n) \leq p_0(n).\]
In this paper, we reach this aim by setting~$\mu=2$ in~\eqref{eq:CP}, and showing that
\begin{equation}\label{eq:p2}
p_2(n) \doteq \max \{ p_0(n+2), p_\infty(n)\} = \begin{cases}
3 & \text{if~$n=1$,}\\
p_0(n+2) & \text{if~$n=2,3$.}
\end{cases}
\end{equation}
We notice that~$p_\infty(2)=p_0(4)=2$ and $p_\infty(3)<p_0(5)$. Hence, for $n= 3$ and $\mu=2$ in ~\eqref{eq:CP}, we really feel the influence of the
non-effective dissipation term.

\smallskip
We prove the following results:
\begin{theorem}\label{th:mu2}
Assume that~$v\in\mathcal{C}^2([0,T)\times \R^n)$ is a solution to~\eqref{eq:CP2} with (even small) initial data $(v_0,v_1)\in \mathcal C_c^2(\R^n)\times
\mathcal C_c^1(\R^n)$ such that $v_1, v_0\geq0$, and $(v_0,v_1)\not \equiv (0,0)$. If~$p\in(1, p_2(n)]$, then~$T<\infty$.
\end{theorem}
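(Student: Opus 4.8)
The starting point is the \emph{Liouville transformation} $w\doteq(1+t)\,v$, which turns \eqref{eq:CP2} into the undamped equation
\[ w_{tt}-\triangle w=(1+t)^{1-p}\,|w|^p, \]
with data $w(0,\cdot)=v_0\ge0$ and $w_t(0,\cdot)=v_0+v_1\ge0$. This is the conceptual content of the title: the weight $(1+t)^{1-p}$ acts as an extra decay which, as the computation below shows, promotes the effective spatial dimension from $n$ to $n+2$. By finite propagation speed, if the data are supported in $\{|x|\le R\}$ then $\supp w(t,\cdot)\subseteq\{|x|\le R+t\}$, so $\meas\supp w(t,\cdot)\lesssim(1+t)^n$. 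The plan is to derive a differential inequality for a suitable average of $w$ and then to invoke a Kato-type lemma forcing a finite life-span.

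Following Yordanov--Zhang, I would use the eigenfunction $\phi(x)\doteq\int_{S^{n-1}}e^{x\cdot\omega}\,d\omega$, which satisfies $\triangle\phi=\phi$, $\phi\ge|S^{n-1}|$, and $\phi(x)\sim c_n\,|x|^{-(n-1)/2}e^{|x|}$ as $|x|\to\infty$, together with $\psi(t,x)\doteq e^{-t}\phi(x)$. Setting
\[ F_0(t)\doteq\int_{\R^n}w\,dx, \qquad F_1(t)\doteq\int_{\R^n}w\,\psi\,dx, \]
and integrating the equation (the Laplacian drops out by compact support, while for $F_1$ one uses $\triangle\psi=\psi$ and $\psi_t=-\psi$), I obtain
\[ F_0''=(1+t)^{1-p}\!\int_{\R^n}|w|^p\,dx, \qquad F_1''+2F_1'=(1+t)^{1-p}\!\int_{\R^n}|w|^p\,\psi\,dx. \]
Both right-hand sides are nonnegative, so the nonnegativity of the data propagates to $F_0,F_1>0$, and convexity gives $F_0(t)\gtrsim 1+t$. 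The weight has size $\int_{\supp w}\psi\,dx\lesssim(1+t)^{(n-1)/2}$ by the asymptotics of $\phi$; it is the interplay of this factor with the extra $(1+t)^{1-p}$ that raises the effective dimension by two.

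Hölder's inequality converts these into closed inequalities. Using $\meas\supp w\lesssim(1+t)^n$ and the weight bound,
\[ F_0''\gtrsim(1+t)^{-(n+1)(p-1)}F_0^{\,p}, \qquad F_1''+2F_1'\gtrsim(1+t)^{-\frac{(n+1)(p-1)}{2}}F_1^{\,p}. \]
The first inequality, with the seed $F_0\gtrsim 1+t$, yields blow-up through a Kato-type lemma for $1<p\le p_\infty(n)=1+2/n$ (the critical power handled by the logarithmic refinement mentioned below); this already covers the lower range, and in particular the one-dimensional endpoint $p_2(1)=3$. The exponent $\tfrac12(n+1)(p-1)$ in the second inequality is precisely the one produced by the \emph{free} wave equation in dimension $n+2$, which is why its threshold is the Strauss exponent $p_0(n+2)$: seeding $F_1$ with a polynomial lower bound and iterating the Kato-type lemma gives blow-up for every $p$ with $1<p<p_0(n+2)$.

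Two steps require care, and the second is the main obstacle. First, to reach the upper part of the range one cannot seed $F_1$ with a constant: the mass of $\int|w|^p\psi\,dx$ is carried by the region $|x|\sim t$, where $\phi\sim e^{|x|}$ cancels the $e^{-t}$, so one must produce a polynomial lower bound for $F_1$ by inserting the lower bound for the \emph{linear} flow concentrated near the light cone (the Yordanov--Zhang lower-bound lemma). Second, and hardest, is the endpoint $p=p_0(n+2)$ itself, where the iteration only diverges logarithmically: here I would replace the single test function by a \emph{slicing} argument over a nested family of truncated cones, equivalently the sharp Kato lemma carrying a logarithmic weight, which upgrades the borderline estimate to an actual contradiction. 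The two functionals together then give $T<\infty$ for all $p\in(1,p_2(n)]$.
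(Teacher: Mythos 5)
Your setup coincides with the paper's: the same transformation $w=(1+t)v$, the same two functionals $F_0=\int w\,dx$ and $F_1=\int w\,e^{-t}\phi\,dx$, and your differential identities are correct (including $F_1''+2F_1'=(1+t)^{1-p}\int|w|^p\psi\,dx$, which the paper never writes but which is true). The gap is the engine you propose for the Strauss range $p_\infty(n)<p\le p_0(n+2)$, which is the whole content of the theorem when $n\ge3$: you iterate on the \emph{damped} inequality $F_1''+2F_1'\gtrsim(1+t)^{-q_1}F_1^p$, $q_1\doteq\tfrac{(n+1)(p-1)}{2}$, and this inequality structurally cannot reach $p_0(n+2)$. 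Multiplying by $e^{2t}$ and integrating, $e^{-2t}\int_0^te^{2s}(\cdot)\,ds$ behaves like the integrand at $s=t$, so the inequality is effectively \emph{first order}: each iteration step gains one power of $t$, not two, and the critical seed exponent is $a^*=\tfrac{q_1-1}{p-1}=\tfrac{n+1}{2}-\tfrac1{p-1}$. The only seeds available are the constant one from the Yordanov--Zhang positivity lemma ($a=0$) and the light-cone lower bound of the linear flow, which (since $\psi\simeq(1+t)^{-(n-1)/2}$ on the shell $|x|\simeq t$) gives $\int|w|^p\psi\gtrsim(1+t)^{\frac{n-1}{2}(1-p)}$ and hence $a=1-q_1$; either one exceeds $a^*$ only when $q_1<1$, i.e. $p<1+\tfrac{2}{n+1}$, which is strictly below $p_0(n+2)$ for every $n$ (for $n=3$: $1.5$ versus $p_0(5)\approx1.78$, even below $p_\infty(3)=5/3$). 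Indeed, for $q_1>1$ the function $y(t)=M\bigl(2-(1+t)^{-\delta}\bigr)$ with $\delta<\min\{q_1-1,1\}$ and $M$ small satisfies $y''+2y'\ge c(1+t)^{-q_1}y^p$ globally, so the differential inequality plus a constant lower bound can never, by itself, force $T<\infty$ in your claimed range. Your heuristic ``the exponent matches the free wave equation in dimension $n+2$, hence the threshold is $p_0(n+2)$'' fails precisely because, for the free wave equation too, the Strauss exponent is not obtained by iterating on $F_1$.

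What the paper does instead is to use $F_1$ only through its \emph{constant} lower bound (Lemma 2.2 of Yordanov--Zhang), inserted into H\"older's inequality with the weight $\psi^{p/(p-1)}$: since $\int_{B(1+t)}\psi^{p/(p-1)}dx\lesssim(1+t)^{(n-1)-\frac{(n-1)p}{2(p-1)}}$, this yields the pure-power bound $\ddot F_0\gtrsim(1+t)^{n-\frac{n+1}{2}p}$; integrating \emph{twice} gives the seed $F_0\gtrsim(1+t)^{n+2-\frac{n+1}{2}p}$, and the Kato lemma is then applied to the \emph{undamped} second-order inequality $\ddot F_0\gtrsim(1+t)^{-(n+1)(p-1)}F_0^p$, whose two integrations per step are exactly what push the threshold up to $p_0(n+2)$. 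You never combine the two functionals in this way. Separately, your endpoint treatment is only a gesture: at the Fujita-type endpoint $p=p_\infty(n)$ the logarithmic gain is indeed cheap ($\ddot F_0\gtrsim(1+t)^{-1}$, so $F_0\gtrsim(1+t)\ln(1+t)$, then the sharp Kato lemma), which rescues $n=1,2$; but for $n\ge3$ the endpoint is $p=p_0(n+2)<2$, and the paper needs a genuinely different mechanism there (spherical means, a lower bound for the Radon transform of $u$, and the $L^p$-boundedness of a fractional-integration operator) to manufacture the logarithm. Naming ``slicing over truncated cones'' does not substitute for that step, and since your subcritical mechanism already stalls well below $p_0(n+2)$, for $n\ge3$ both the open range and the endpoint remain unproven in the proposal.
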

Being the $1$-dimensional existence result already proved in~\cite{DA14+}, we prove the existence result for space dimension~$n=2$ and~$n=3$.
\begin{theorem}\label{Thm:ex}
Let $n=2$ and $p>2$. Let~$(\bar v_0,\bar v_1)\in \mathcal{C}^2_c(\R^2)\times \mathcal{C}^1_c(\R^2)$. Then there exists~$\varepsilon_0>0$ such that for
any~$\varepsilon\in(0,\varepsilon_0)$, if~$v_0=\varepsilon\bar v_0$ and~$v_1=\varepsilon\bar v_1$, then the Cauchy problem~\eqref{eq:CP2} admits a unique
global (in time) small data solution $v\in \mathcal{C}([0,\infty),H^2)\cap \mathcal{C}^1([0,\infty),H^1)\cap \mathcal{C}^2([0,\infty),L^2)$.
\end{theorem}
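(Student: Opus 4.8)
The plan is to remove the damping by the substitution $w\doteq(1+t)v$, which for the scale-invariant coefficient $\mu=2$ turns \eqref{eq:CP2} into a \emph{free} wave equation with a time-weighted source. Indeed, multiplying the equation in \eqref{eq:CP2} by $(1+t)$ and using $w_{tt}=2v_t+(1+t)v_{tt}$ together with $\triangle w=(1+t)\triangle v$, one checks that $w$ solves
\begin{equation*}
w_{tt}-\triangle w=(1+t)^{1-p}\,|w|^p,\qquad w(0,x)=\varepsilon\,\bar v_0(x),\quad w_t(0,x)=\varepsilon(\bar v_0+\bar v_1)(x),
\end{equation*}
with data again in $\mathcal C_c^2(\R^2)\times\mathcal C_c^1(\R^2)$. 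The whole point is that the extra decaying factor $(1+t)^{1-p}$ in the source is exactly what upgrades the effective Strauss threshold from $p_0(2)$ to $p_0(4)=2$: in the decay bookkeeping below it supplies precisely the gain that two additional spatial dimensions would provide, which is the analytic content of the conjectured identity $p_2(n)=p_0(n+2)$.

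Next I would run a weighted $L^\infty$ iteration for $w$. By finite propagation speed the compact support of the data confines $w$ to a cone $|x|\le t+R$, so it is natural to measure it in
\begin{equation*}
\|w\|_{X}\doteq\sup_{t\ge0,\;x\in\R^2}\;\<t+|x|\>^{\frac12}\,\<t-|x|\>^{\kappa}\,|w(t,x)|,
\end{equation*}
where the first exponent is dictated by the pointwise decay $\<t+|x|\>^{-1/2}\<t-|x|\>^{-1/2}$ of the free two-dimensional wave propagator acting on compactly supported data, and $\kappa\in(0,\tfrac12]$ is a parameter to be fixed. Writing $L$ for the Duhamel operator solving $(\p_t^2-\triangle)u=g$ with zero data and $s$ for the time variable of the source, the fixed point problem reads $w=w^{\lin}+L\big[(1+s)^{1-p}|w|^p\big]$, and the scheme closes once one proves $\|w^{\lin}\|_X\lesssim\varepsilon$ together with
\begin{equation*}
\big\|L\big[(1+s)^{1-p}|w|^p\big]\big\|_X\lesssim\|w\|_X^p .
\end{equation*}
A standard contraction on a ball of radius $\sim\varepsilon$ in $X$ then yields a unique global $w$ for $\varepsilon$ small.

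The heart of the matter is the weighted estimate for $L$. Inserting $|w(s,y)|^p\le\|w\|_X^p\,\<s+|y|\>^{-p/2}\<s-|y|\>^{-\kappa p}$ into the explicit two-dimensional representation of $L$ — the Poisson-type formula with kernel $\big((t-s)^2-|x-y|^2\big)^{-1/2}$ integrated over the backward solid cone — reduces everything to a concrete space-time integral, which after multiplication by the target weight $\<t+|x|\>^{1/2}\<t-|x|\>^{\kappa}$ must stay bounded. Carrying out this integral, convergence at spatial infinity and convergence along the light cone force two inequalities relating $\kappa$ and $p$; the factor $(1+s)^{1-p}$ relaxes them by exactly the amount associated to the shift $n\mapsto n+2$, so that an admissible $\kappa$ exists precisely when $p>p_0(4)=2$. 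I expect this bound to be the main obstacle: unlike the odd-dimensional case, the two-dimensional kernel lives on the whole solid backward cone and is singular on its boundary, so the interior tail and the boundary layer $\{\,|t-|x||\lesssim1\,\}$ must be treated separately, and it is the borderline behaviour there that pins the critical exponent at $2$.

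Finally, to obtain the solution in the regularity class of the statement I would combine the above with standard local well-posedness. For $n=2$ and $p>2$ the map $v\mapsto|v|^p$ is locally Lipschitz from $H^2$ into $H^1$ (using $H^2(\R^2)\hookrightarrow L^\infty$ and $p>2$), so \eqref{eq:CP2} has a unique local solution in $\mathcal C([0,T),H^2)\cap\mathcal C^1([0,T),H^1)\cap\mathcal C^2([0,T),L^2)$ with the continuation criterion that $T<\infty$ forces $\limsup_{t\to T}\|v(t)\|_{L^\infty}=\infty$. The global bound $\n{v}\lesssim\|(1+t)^{-1}w\|_X\lesssim\varepsilon$ controls $\|v(t)\|_{L^\infty}$ for all $t$, so this criterion is never met and the local solution extends globally; routine energy estimates driven by this pointwise bound, via Gronwall's inequality, then propagate the $H^2\times H^1$ regularity to all times and deliver the asserted function space, while uniqueness follows from the contraction in $X$ together with local uniqueness in the energy class.
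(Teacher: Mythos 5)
Your opening move is the same as the paper's (the Liouville transform $w=(1+t)v$ giving $w_{tt}-\triangle w=(1+t)^{1-p}|w|^p$; as an aside, your transformed data $w(0)=\varepsilon\bar v_0$, $w_t(0)=\varepsilon(\bar v_0+\bar v_1)$ are the correct ones — the paper's line after \eqref{eq:CPnew} has them swapped). After that you diverge completely. The paper proves Theorem~\ref{Thm:ex} by an $L^2$-based contraction using Klainerman vector fields: the energy estimate \eqref{eq:energy}, the Klainerman--Sobolev inequalities \eqref{eq:Kl1}--\eqref{eq:Kl3Z}, and the Li--Xin type estimate of Lemma~\ref{lem:CPlin}, which controls $\|u\|_{\Gamma,1,2}$ at the price of a small growth $t^\delta$; the threshold $p>2$ emerges there from the convergence of $\int_0^t(t-\tau)^\delta\<\tau\>^{-(p-1)+p\delta+\frac{p-(1+\gamma)}2}d\tau$ as $\epsilon\to0$. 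You instead propose a weighted $L^\infty$ contraction built on the explicit 2D Duhamel kernel — precisely the pointwise Asakura/Kubo-type scheme the paper deploys only for $n=3$ (Theorems~\ref{thm:lin}, \ref{thm:nonl} and Propositions~\ref{thm:I0}, \ref{thm:I1}), and deliberately avoids for $n=2$. If completed, your route would yield more than the paper's for this theorem, namely sharp pointwise decay $|v|\lesssim\<t\>^{-1}\<t+|x|\>^{-1/2}\<t-|x|\>^{-\kappa}$ (the 2D analogue of Remark~\ref{rem:final}); the price is the extra bridge through local $H^2$ well-posedness and a continuation criterion to land in the stated solution class, which you sketch correctly.

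The genuine gap is that the single estimate on which everything rests,
\begin{equation*}
\big\| L\big[(1+s)^{1-p}|w|^p\big]\big\|_X\lesssim\|w\|_X^p,
\end{equation*}
is asserted, not proved — and in 2D it is the entire difficulty. In the 3D radial case the paper reduces Duhamel to one-dimensional $\rho$-integrals (Remark~\ref{rem:tr}, \eqref{eq:change}), after which everything follows from the scalar Lemma~\ref{lem:Ixi} and Propositions~\ref{thm:I0}, \ref{thm:I1}; your Theorem~\ref{Thm:ex} has no radial hypothesis, so you must first integrate the singular kernel $((t-s)^2-|x-y|^2)^{-1/2}$ over the angular variable (a nontrivial lemma of Kubota/Tsutaya type) and only then run the $(\alpha,\beta)=(s+|y|,s-|y|)$ bookkeeping, with the layer $|t-|x||\lesssim1$ treated separately. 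Your claim that an admissible $\kappa$ exists exactly for $p>2$ does look right: on the support $(1+s)\simeq\<s+|y|\>$, so the source decays like $\<s+|y|\>^{1-\frac{3p}2}\<s-|y|\>^{-\kappa p}$, and a rough check (e.g.\ at $x=0$) forces $\kappa\geq\frac1{2(p-1)}$ from the interior of the cone, while the linear part forces $\kappa\leq\frac12$ — a nonempty range precisely when $p\geq2$, with endpoint/logarithmic losses at $\kappa p=1$ still to be excluded. So the program is sound and consistent with the paper's philosophy (it transplants the 3D technique to 2D), but its core lemma — the counterpart of Lemma~\ref{lem:Ixi} and Propositions~\ref{thm:I0}, \ref{thm:I1} for the solid 2D backward cone — is missing, and that is where all the work lies.
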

\begin{theorem}\label{Thm:3}
Let $n=3$ and $p>p_0(5)$. Let~$(\bar v_0,\bar v_1)\in \mathcal{C}^2_c(\R^3)\times \mathcal{C}^1_c(\R^3)$, be radial. Then there exists~$\varepsilon_0>0$
such that for any~$\varepsilon\in(0,\varepsilon_0)$, if~$v_0=\varepsilon\bar v_0$ and~$v_1=\varepsilon\bar v_1$, then the Cauchy problem~\eqref{eq:CP2}
admits a unique global (in time) small data radial solution $v\in \mathcal{C}([0,\infty)\times\R^3)\cap \mathcal{C}^2([0,\infty)\times
(\R^3\setminus\{0\}))$.
\end{theorem}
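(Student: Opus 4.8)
The plan is to exploit that $\mu=2$ by removing the damping through the substitution $v=(1+t)^{-1}u$. A direct differentiation gives the pointwise identity
\[ v_{tt}-\triangle v+\frac2{1+t}\,v_t=(1+t)^{-1}\big(u_{tt}-\triangle u\big), \]
so that $v$ solves \eqref{eq:CP2} if and only if $u$ solves the undamped equation with a time-weighted nonlinearity
\[ u_{tt}-\triangle u=(1+t)^{1-p}|u|^p,\qquad u(0,x)=v_0(x),\quad u_t(0,x)=v_0(x)+v_1(x). \]
Since the data are radial, I would set $u=u(t,r)$ and pass to $U(t,r)=r\,u(t,r)$, which solves the one-dimensional problem $U_{tt}-U_{rr}=(1+t)^{1-p}r^{1-p}|U|^p$ on $r>0$ with $U(t,0)=0$. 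Extending oddly across $r=0$ and using d'Alembert's formula turns this into the integral equation
\[ u(t,r)=u^{\lin}(t,r)+\frac1{2r}\int_0^t(1+s)^{1-p}\int_{|r-t+s|}^{r+t-s}\rho\,|u(s,\rho)|^p\,d\rho\,ds, \]
with $u^{\lin}$ the radial free solution carrying the initial data.

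Next I would run a contraction argument in a weighted $L^\infty$ space. The guiding heuristic is that for compactly supported data $u^{\lin}\sim(1+t+r)^{-1}$ near the light cone, so that $v=(1+t)^{-1}u$ decays like $(1+t+r)^{-2}$ there; since $(1+t+r)^{-2}=(1+t+r)^{-(N-1)/2}$ with $N=n+2=5$, this is exactly the decay of a free wave in dimension five, and it is this gain that produces the threshold $p_0(5)$. Concretely I would fix an exponent $\kappa=\kappa(p)>0$ and work with
\[ \|u\|_X\doteq\sup_{t,r\ge0}(1+t+r)\,(1+|t-r|)^{\kappa}\,|u(t,r)|, \]
taking $\Phi(u)$ to be the right-hand side of the integral equation. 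Two estimates are needed: that $\|u^{\lin}\|_X\lesssim\varepsilon$, which follows from the explicit radial representation together with finite propagation speed (the weight $(1+|t-r|)^\kappa$ costs nothing because $u^{\lin}$ is supported in $|t-r|\le R$), and the nonlinear bounds $\|\Phi(u)\|_X\lesssim\varepsilon+\|u\|_X^p$ and $\|\Phi(u)-\Phi(\tilde u)\|_X\lesssim(\|u\|_X^{p-1}+\|\tilde u\|_X^{p-1})\|u-\tilde u\|_X$, which make $\Phi$ a contraction on a small ball for $\varepsilon$ small.

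The heart of the proof, and the main obstacle, is the nonlinear weighted estimate. Inserting $|u(s,\rho)|\le\|u\|_X(1+s+\rho)^{-1}(1+|s-\rho|)^{-\kappa}$ into the Duhamel term, I must show
\[ \frac1r\int_0^t(1+s)^{1-p}\int_{|r-t+s|}^{r+t-s}\rho\,(1+s+\rho)^{-p}(1+|s-\rho|)^{-\kappa p}\,d\rho\,ds\lesssim(1+t+r)^{-1}(1+|t-r|)^{-\kappa}. \]
I would pass to characteristic coordinates $\xi=s+\rho$, $\eta=\rho-s$, and split the integration into the exterior region $r>t$, the interior region $r<t$, and the transition zone $|t-r|\lesssim1$. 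Near the cone the factor $(1+s)^{1-p}$ behaves like $(1+s+\rho)^{1-p}$, and it is precisely this extra algebraic decay which, added to the $(1+s+\rho)^{-p}$ coming from $\rho\,|u|^p$ and the radial $\rho$-weight, raises the effective spatial dimension by two in the bookkeeping. Convergence of the resulting one-dimensional integrals in $\xi$ and $\eta$, together with the requirement that one recover the full weight $(1+t+r)(1+|t-r|)^\kappa$ after multiplying by it, forces the algebraic inequality $(N-1)p^2-(N+1)p-2>0$ with $N=5$, that is $p>p_0(5)$, and at the same time pins down the admissible range of $\kappa$. Matching the $(1+|t-r|)^{-\kappa}$ weight uniformly in the interior region is the delicate point.

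Finally, the fixed point $u\in X$ is transformed back by $v=(1+t)^{-1}u$; the weighted bound on $u$ then yields the global decay of $v$ and shows it is a genuine solution of \eqref{eq:CP2}. Standard local existence and the smoothness of $u^{\lin}$ and of the Duhamel integral away from the axis give $v\in\mathcal C([0,\infty)\times\R^3)\cap\mathcal C^2([0,\infty)\times(\R^3\setminus\{0\}))$, the loss of $\mathcal C^2$-regularity at $r=0$ being the usual artefact of the reduction $U=ru$; uniqueness in this class is part of the contraction.
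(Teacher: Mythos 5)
Your route coincides with the paper's: the substitution $u=(1+t)v$ turning \eqref{eq:CP2} into \eqref{eq:CPnew} (your Cauchy data $u(0,\cdot)=v_0$, $u_t(0,\cdot)=v_0+v_1$ are the correct ones), the reduction of the radial problem to a one-dimensional integral equation via $U=ru$, odd extension and d'Alembert, and a contraction in a weighted $L^\infty$ space with weight $(1+t+r)(1+|t-r|)^{\kappa}$, which is the paper's $X_\kappa$ up to the relabelling $\kappa\to\kappa-1$ (and minus its derivative component, see below). The genuine gap is that the nonlinear weighted estimate, which you yourself call ``the heart of the proof,'' is never established: you write down the inequality to be proved, name the characteristic coordinates $\xi=s+\rho$, $\eta=\rho-s$, and assert that the bookkeeping ``forces'' $p>p_0(5)$ and ``pins down'' $\kappa$. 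That bookkeeping \emph{is} the proof of the theorem. In the paper it occupies Lemma~\ref{lem:Ixi} and Propositions~\ref{thm:I0} and~\ref{thm:I1}: one first proves $I(\xi)=\int_{-\xi}^{\xi}\langle\eta+\xi\rangle\langle\eta-\xi\rangle^{-(p-1)}\langle\eta\rangle^{-p(\kappa-1)}\,d\eta\lesssim\langle\xi\rangle^{-(\kappa-p)}$ exactly for $\kappa$ in the window \eqref{eq:rangek1}, whose nonemptiness $(3-p)/(p-1)\le 2(p-1)$ is equivalent to $2p^2-3p-1>0$, i.e.\ $p>p_0(5)$; one then integrates $\langle\xi\rangle^{-p}I(\xi)$ over $[(t-r)_+,t+r]$ and runs a case analysis in the zones $t\ge 2r$; $t\le 2r$, $r\le1$; $t\le 2r$, $r\ge1$; and the exterior zone $t\le r$ via the oddness trick (Remark~\ref{rem:tr}), to recover the full weight $\langle t+r\rangle^{-1}\langle t-r\rangle^{-(\kappa-1)}$. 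Without carrying this out, the appearance of $p_0(5)$ in your argument is a conjecture, not a conclusion.

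A second, fixable, gap concerns regularity. For $p_0(5)<p<2$ the nonlinearity is only $\mathcal{C}^1$, so a fixed point obtained in a sup-norm-only space is a priori merely continuous, and the claimed $\mathcal{C}^2$ regularity away from $r=0$ does not follow from ``smoothness of the Duhamel integral'': differentiating the Duhamel term twice produces $H_u[s]'$, i.e.\ $\partial_\rho u$, inside the integral. The paper handles this by building the weighted control of $\partial_r(ru)$ into the $X_\kappa$-norm (which is why it must also prove the endpoint estimates $I_{1,\pm}$ of Proposition~\ref{thm:I1}) and by Proposition~\ref{prop:Duh}. In your setting one could instead bootstrap through the integral equation: continuity of $u$ gives continuity of $\partial_r(rLu)$ by the endpoint formulas, hence $ru\in\mathcal{C}^1$; then $H_u[s]'=\frac12\bigl(f(u)+f'(u)(\partial_\rho(\rho u)-u)\bigr)$ is continuous, hence $rLu\in\mathcal{C}^2$ and $u\in\mathcal{C}^2$ for $r\neq0$. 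Either way the argument must be made; it is not an automatic ``artefact of the reduction $U=ru$.''
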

For the sake of brevity, we use the notation
\begin{equation*}
\<y\>=1+|y| \text{ for any } y\in\R^n.
\end{equation*}
To prove our results we perform the change of variable~$u(t,x)=\<t\>v(t,x)$. So, problem~\eqref{eq:CP2} becomes
\begin{equation}\label{eq:CPnew}
\begin{cases}
u_{tt}-\triangle u = \<t\>^{-(p-1)}|u|^p, & t\geq0, \quad x\in\R^n,\\
u(0,x)=u_0(x), & x\in\R^n,\\
u_t(0,x)=u_1(x), & x\in\R^n
\end{cases}
\end{equation}
with~$u_0=v_0+v_1$ and~$u_1=v_1$. This means we are dealing with a semi-linear wave equations with a time-dependent coefficient in the nonlinearity. For
proving Theorem \ref{th:mu2} we will extend to this equation the classical blow-up technique due to R.T. Glassey. For Theorem \ref{Thm:ex} we use
Klainerman vector fields. Due to the lack of regularity of the nonlinear term, for~$p\in (p_0(5),2)$, the proof of Theorem \ref{Thm:3}
requires a different idea and we shall restrict to radial solutions. We will establish an appropriate version of the pointwise estimates for the wave equation. \\
By the aid of these estimates, in this latter case, we will also find a decay behaviour for the solution to \eqref{eq:CP2} which is the same as for
solutions to the $(n+2)$-dimensional wave-equations. For details, see Theorem~\ref{thm:final} and Remark~\ref{thm:final}. We expect that the technique of
the pointwise estimates could be applied to prove the existence for~$p>p_0(n+2)$ with $n\ge 4$. Consequently, the improving influence of the dissipation
term on the one hand and the non-effective behaviour on the other hand  can be expressed by a shift of Strauss exponent $p_0(n)$ to $p_0(n+2)$ as in the title of this paper.

\section{An overview of some existing results}\label{Sec.overview}
\subsection{Wave model}
For the Cauchy problem ~\eqref{eq:CP2000} it is well-known that the \emph{critical exponent} for the existence of global (in time) small data solutions
is~$p_0(n)$. More precisely, if~$1<p\leq p_0(n)$, then solutions to~\eqref{eq:CP2000} blow-up in finite time for a suitable choice of initial data (see
\cite{G}, \cite{J}, \cite{JZ}, \cite{Sc}, \cite{Si}, \cite{YZ06}), whereas for $p\in(p_0(n),(n+3)/(n-1)]$ a unique global (in time) small data solution
exists (see \cite{GLS}, \cite{G2}, \cite{J}, \cite{T}, \cite{Z}). In space dimension~$n=1$, solutions to~\eqref{eq:CP2000} blow-up in finite time for
any~$p>1$, hence, we put~$p_0(1)=\infty$ (see~\cite{G}).
\subsection{Scale-invariant damped wave models}
Known results on the global existence of small data solutions to~\eqref{eq:CP} can be summarized as follows:
\begin{itemize}
\item Non-existence of weak solutions for $\mu>1$ and $p\le 1+2/n$, provided $\int_{\mathbb{R}^n} \big(u_0+(\mu-1)^{-1}u_1\big) \,d\,x>0$ (see
Theorem 1.1 and Example 3.1 in~\cite{DAL13}).
\item Non-existence of weak solutions for $\mu\in(0,1]$ and $p\le 1+2/(n-1+\mu)$, provided $\int_{\mathbb{R}^n} u_1 \,d\,x>0$ (see Theorem 1.4 in~\cite{W}).
\item According to Theorems~2 and~3 in~\cite{DA14+} global (in time) existence of energy solutions for small data if $p>1+2/n$ and
\begin{itemize}
\item if $n=1$ and $\mu \ge 5/3$,
\item if $n=2$ and $\mu \ge 3$,
\item for any~$n\geq3$ if $\mu\ge n+2$.
\end{itemize}
\end{itemize}
\subsection{Useful transformations}
The equation in~\eqref{eq:CP} has many interesting properties. In particular, if~$\mu\in\R$, by the change of variables
\begin{equation}\label{eq:musharp}
v^\sharp(t,x)=\<t\>^{\mu-1}\,v(t,x)
\end{equation}
one sees that $v$ solves~\eqref{eq:CP} if, and only if, $v^\sharp(t,x)$ solves
\begin{equation}\label{eq:usharp}
\left\{
\begin{array}{l}
v_{tt}^\sharp- \triangle v^\sharp + \frac{\mu^\sharp}{\<t\>}\,v_t^\sharp = \<t\>^{(\mu^\sharp-1)(p-1)} |v^\sharp|^p, \\
v^\sharp(0,x)=v_0(x),\\
v_t^\sharp(0,x)=v_1(x)+(1-\mu^\sharp)v_0(x)
\end{array}
\right.
\end{equation}
with $\mu^\sharp=2-\mu\,.$\\
If~$\mu\in(-\infty,1)$ in~\eqref{eq:CP}, by introducing the change of variable~$\tilde{v}(t,x)=v(\Lambda(t)-1,x)$, where
\begin{equation}\label{eq:Lambdapol}
\Lambda(t)\doteq \frac{\<t\>^{\ell+1}}{\ell+1}, \qquad \text{and} \quad \ell=\frac\mu{1-\mu},
\end{equation}
the Cauchy problem~\eqref{eq:CP} becomes a Cauchy problem for a semi-linear free wave equation with polynomial propagation speed
\begin{equation}\label{eq:wave01}
\left\{
\begin{array}{l}
\tilde{v}_{tt}-\<t\>^{2\ell}\triangle \tilde{v}= \<t\>^{2\ell} |\tilde{v}|^p,\\
\tilde{v}(\bar t,x)=v_0(x),\\
\tilde{v}_t(\bar t,x)=(1-\mu)^{-\mu}v_1(x),
\end{array}
\right.
\end{equation}
where $\bar t=(1-\mu)^{-(1-\mu)}-1$. We notice that:
\begin{itemize}
\item $\ell>0$ if, and only if,~$\mu\in(0,1)$. On the other hand, $\ell\in(-1,0)$ if~$\mu\in(-\infty,0)$;
\item $\bar t\in(0,e^{\frac1e}-1]$ if~$\mu\in(0,1)$ and~$\bar t\to0$ as~$\mu\to0$ and as~$\mu\to1$;
\item $\bar t\in(-1,0)$ if~$\mu\in(-\infty,0)$.
\end{itemize}

Similarly, by virtue of \eqref{eq:musharp}, \eqref{eq:usharp} and \eqref{eq:Lambdapol}, if~$\mu>1$, the Cauchy problem \eqref{eq:CP} becomes
\begin{equation}\label{eq:wave12}
\left\{
\begin{array}{l}
\tilde{v}^\sharp_{tt}-\<t\>^{2\ell^\sharp}\triangle \tilde{v}^\sharp= c_\mu\,\<t\>^{2\ell^\sharp-(p-1)} |\tilde{v}^\sharp|^p,\\
\tilde{v}^\sharp(t^\sharp,x)= v_0(x),\\
\tilde{v}^\sharp_t(t^\sharp,x)=(\mu-1)^{-(2-\mu)}( v_1(x)+(\mu-1) v_0(x)),
\end{array}
\right.
\end{equation}
where~$\ell^\sharp=(2-\mu)/(\mu-1)$, $t^\sharp=(\mu-1)^{-(\mu-1)}-1$, and $c_\mu=(\mu-1)^{(\mu-1)(p-1)}$.

On the other hand, if~$\mu=1$, by setting~$\Lambda(t)=e^t$, the Cauchy problem~\eqref{eq:CP} becomes
\begin{equation}
\label{eq:expwave}
\left\{
\begin{array}{l}
\tilde{v}_{tt}-e^{2t}\triangle \tilde{v}= e^{2t}\,|\tilde{v}|^p,\\
\tilde{v}(0,x)=v_0(x), \\
\tilde{v}_t(0,x)=v_1(x).
\end{array}
\right.
\end{equation}

By means of all these transformations, following the reasoning in Example 4.4 in~\cite{DAL13}, we can obtain as in ~\cite {W} the non-existence of global (in time)
weak solutions to~\eqref{eq:CP} for $\mu\in(0,1)$ and
\[ p\le 1+\frac{2(\ell+1)}{n(\ell+1)-1}=1+\frac{2}{n-1+\mu}. \]
\subsection{Blow-up dynamics}
Since in~\cite{DAL13, W} the test function method is employed, the blow-up dynamic remains unknown. However, one can apply an argument similar to those
developed in \cite{G}, to~\eqref{eq:wave01}, \eqref{eq:wave12} and~\eqref{eq:expwave}, obtaining that all the $L^q$ norms of local solutions blow-up in
finite time. Indeed, in Example 2a in~\cite{Yag} the author gives sufficient conditions on
$$
u_{tt}-a^2(t)\Delta u=m(t)|u|^p,
$$
which guarantee that $\lim_{t\to T} \|u(t)\|_q=+\infty$ for any $1\le q\le +\infty$, where~$T$ is the maximal
existence time for a smooth solution with nonnegative, compactly supported, initial data. See also \cite{Ga} for the 1-dimensional case.
By means of~\eqref{eq:wave01}, \eqref{eq:wave12} and~\eqref{eq:expwave} from these results one can deduce the blow-up in finite time for~\eqref{eq:CP}
\begin{itemize}
\item if $\mu\in(0,1)$ and $p<1+\frac{2}{n-1+\mu}$,
\item if $\mu=1$ and $p\le p_\infty$,
\item if $\mu\in (1,2]$ and $p< p_\infty$.
\end{itemize}
We notice that blow-up in finite time is proved for the limit case $p=1+2/(n-[1-\mu]^+)$ only for $\mu=1$, while non-existence of weak solutions
for~$\mu\in(0,1)\cup (1,2]$ is also known for $p=1+2/(n-[1-\mu]^+)$. Up to our knowledge we have no other information from literature about existence or
non-existence for \eqref{eq:CP}. In particular, the blow-up dynamic is unknown for $\mu>2$.

After this discussion, it was natural to ask if the blow-up exponent~$p_\infty(n)=1+2/n$ could be improved for some~$\mu\in[1,5/3)$ if~$n=1$, for
some~$\mu\in[1,3)$ if~$n=2$, or for some~$\mu\in[1,n+2)$ if~$n\geq3$. On the other hand, one may ask if  a counterpart result of global (in time) existence
can be proved. Theorems \ref{th:mu2}, \ref{Thm:ex}, \ref{Thm:3} give a positive answer to these questions in the special case $\mu=2$. These results may give precious hints
about the general case of small $\mu$.
\subsection{Space-dependent damping term}
For the sake of completeness, we remark that the case of wave equation with space-dependent damping
\begin{equation}
\label{eq:CPx}
\begin{cases}
v_{tt}-\triangle v + \mu\<x\>^{-\alpha}\,v_t = |v|^p, & t\geq0, \quad x\in\R^n,\\
v(0,x)=v_0(x), & x\in\R^n,\\
v_t(0,x)=v_1(x), & x\in\R^n,
\end{cases}
\end{equation}
where~$\mu>0$ and~$\alpha\in(0,1]$, is also particularly difficult when~$\alpha=1$. On the one hand, in~\cite{ITY09}, the authors proved that the critical
exponent for the existence of global (in time) small data solutions is~$1+2/(n-\alpha)$ if~$\alpha\in(0,1)$. On the other hand, in~\cite{ITY13} they proved for $\alpha=1$
that the estimates for the energy of solutions to the linear model of~\eqref{eq:CPx} show a decay rate which depends on~$\mu$ for~$\mu < n$. This property hints to a
$\mu$-depending critical exponent for~\eqref{eq:CPx} for small~$\mu$.

To complete our overview, we mention that the critical exponent for the wave equation with time-dependent damping~$\mu\<t\>^\kappa u_t$ is~$1+2/n$ if~$\kappa\in(-1,1)$ (see~\cite{DALR13,LNZ12, N11}), whereas global existence of small data solutions for~$p>1+2/(n-\alpha)$ for the wave equation with damping~$\mu\<x\>^{-\alpha}\<t\>^{-\beta}$, if~$\alpha,\beta>0$ and~$\alpha+\beta<1$ has been derived in~\cite{W12}.


\section{Proof of Theorem~\ref{th:mu2}}

Let us remind the blow-up dynamics for ordinary differential inequalities with polynomial nonlinearity. This result will play a fundamental role in our
approach.
\begin{lemma}\label{Lem:1YZ}
Let~$p>1$, $q\in\R$. Let~$F\in\mathcal{C}^2([0,T))$, positive, satisfying
\begin{equation}\label{eq:F2est}
\ddot{F}(t) \geq K_1 (t+R)^{-q} (F(t))^p \qquad \text{for any~$t\in[T_1,T)$}
\end{equation}
for some~$K_1,R>0$, and~$T_1\in[0,T)$. If
\begin{equation}\label{eq:Fest}
F(t)\geq K_0 (t+R)^a\qquad \text{for any~$t\in[T_0,T)$,}
\end{equation}
for some $a\ge 1$ satisfying $a>(q-2)/(p-1)$, and for some~$K_0>0$, $T_0\in[0,T)$, then~$T<\infty$.

Moreover, let $q\ge p+1$ in~\eqref{eq:F2est}. Then there exists a constant~$K_0=K_0(K_1)>0$ such that, if~\eqref{eq:Fest} holds with~$a=(q-2)/(p-1)$ for
some~$T_0\in[0,T)$, then~$T<\infty$.
\end{lemma}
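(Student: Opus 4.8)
The plan is to eliminate the time weight $(t+R)^{-q}$ by a logarithmic change of variables, which converts \eqref{eq:F2est} into an \emph{autonomous} second-order differential inequality whose blow-up is decided by a competition between the power nonlinearity and a linear damping. Put $b=(q-2)/(p-1)$; the hypothesis $q\ge p+1$ is used precisely to guarantee $b\ge 1$. Introduce
\[ \tau=\log(t+R),\qquad G(\tau)=(t+R)^{-b}F(t). \]
Since $bp-q=b-2$ in the critical case, a direct computation gives $\ddot F=(t+R)^{b-2}\big(G''+(2b-1)G'+b(b-1)G\big)$, so that \eqref{eq:F2est} is equivalent, for $\tau\ge\tau_0\doteq\log(T_0+R)$, to
\[ G''(\tau)+(2b-1)\,G'(\tau)+b(b-1)\,G(\tau)\ \ge\ K_1\,G(\tau)^p, \]
while \eqref{eq:Fest} with $a=b$ becomes simply $G\ge K_0$. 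Because $b\ge1$, the coefficients $2b-1$ and $b(b-1)$ are nonnegative, so $2b-1$ acts as a genuine damping opposing blow-up. Finite-$\tau$ blow-up of $G$ means $\tau$ stays bounded, hence $T<\infty$; so it suffices to show that every $G$ solving this inequality with $G\ge K_0$, $K_0$ large, blows up within a $\tau$-span bounded independently of $T$.

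First I would choose $K_0=K_0(K_1)$ so large that $b(b-1)G\le\tfrac12 K_1 G^p$ for $G\ge K_0$, reducing the inequality to $G''+(2b-1)G'\ge\tfrac12 K_1 G^p$. Multiplying by $e^{(2b-1)\tau}$ shows that $e^{(2b-1)\tau}G'$ is nondecreasing; integrating from $\tau_0$ and using $G\ge K_0$ then forces $G'$ to become, and remain, positive, bounded below by a positive constant, so that $G$ is eventually strictly increasing and $G\to\infty$.

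Next, regarding $G'$ as a function of $G$ and setting $y=(G')^2$, the reduced inequality reads $\tfrac12 y'\ge\tfrac12 K_1 G^p-(2b-1)\sqrt y$ (primes now in $G$). I would bootstrap at the energy level $y\ge A\,G^{p+1}$ with $A=K_1/(2(p+1))$: on its boundary the damping contribution $(2b-1)\sqrt y\sim G^{(p+1)/2}$ is negligible against $K_1 G^p$ once $G\ge K_0$ (here $p>1$ enters through $(p+1)/2<p$), so the region is forward invariant, and a direct integration shows $G$ must enter it. This yields $G'\ge\sqrt A\,G^{(p+1)/2}$ from some $\tau$ on; separating variables and using $(p+1)/2>1$ shows $G$ reaches $+\infty$ in finite $\tau$-time, hence $T<\infty$.

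The main obstacle is exactly the damping $(2b-1)G'$: in contrast to the undamped equation, no single energy identity closes, and one must verify that the nonlinearity eventually dominates the damping. This is where the allowed largeness of $K_0$ is essential, since the threshold is dictated by the two comparisons $b(b-1)G\le\tfrac12 K_1 G^p$ and $(2b-1)\sqrt{A\,G^{p+1}}\le\tfrac12 K_1 G^p$, both valid for $G\ge K_0$ with $K_0$ depending only on $K_1$ (and on the fixed exponents $p,q$). The assumption $q\ge p+1$ serves only to secure $b\ge1$ and hence the correct signs of the linear coefficients.
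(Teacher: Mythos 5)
Your treatment of the critical case is correct in substance, and it follows a genuinely different route from the paper. The paper disposes of the first assertion (the case $a\ge 1$, $a>(q-2)/(p-1)$) by citing Lemma~4 of Sideris \cite{Si}, and then handles the critical case by a bootstrap: plugging the hypothesis $F\ge K_0(t+R)^a$ into the nonlinearity gives a linear Euler-type inequality $\ddot G\gtrsim K_1\tilde K_0^{p-1}\langle s\rangle^{-2}G$, whose comparison solutions grow like $s^{\alpha}$ with $\alpha$ large when $K_0$ is large; once $\alpha>(q-2)/(p-1)$ the critical case reduces to the supercritical one. You instead make the inequality autonomous via $\tau=\log(t+R)$, $G=(t+R)^{-b}F$, and run a damped-oscillator argument: integrating factor $e^{(2b-1)\tau}$ to force $G'>0$, then the forward-invariant region $y\gtrsim G^{p+1}$ in the $(G,y)$ phase plane, then separation of variables using $(p+1)/2>1$. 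Your computations check out ($bp-q=b-2$, the transformed inequality $G''+(2b-1)G'+b(b-1)G\ge K_1G^p$, the sign conditions coming from $b\ge1$, the absorption of $b(b-1)G$ using $G\ge K_0(K_1)$), and finite-$\tau$ blow-up of $G$ does force $T<\infty$. What your route buys is self-containedness: it never invokes the supercritical case, whereas the paper's two-line proof leans entirely on the known lemma; the price is the extra phase-plane analysis needed to beat the damping term $(2b-1)G'$, which the paper's reduction never has to confront.

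Two caveats. First, your proposal proves only the ``Moreover'' part: the first assertion of the lemma is never addressed, neither by proof nor by citation. You should either cite Sideris, as the paper does, or note that your own transformation covers it: for general $a>(q-2)/(p-1)$ the right-hand side becomes $K_1e^{\lambda\tau}G^p$ with $\lambda=a(p-1)-(q-2)>0$, and this growing factor only strengthens every step of your argument (indeed there the largeness of $K_0$ can be traded for largeness of $\tau$). Second, the claim that ``a direct integration shows $G$ must enter'' the region $y\ge AG^{p+1}$ is not quite right: if the trajectory never enters it, then $y'\ge \tfrac12K_1G^p=A(p+1)G^p$ holds throughout, and integration from $G_1$ only gives $y(G)\ge A\bigl(G^{p+1}-G_1^{p+1}\bigr)+y(G_1)$, which can remain below $AG^{p+1}$ for all $G$ (take $y(G_1)=0$ and equality in the ODE). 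What you do get from this dichotomy is $y\ge\tfrac{A}{2}G^{p+1}$ once $G\ge 2^{1/(p+1)}G_1$, and since any bound $y\gtrsim G^{p+1}$ suffices for the final separation of variables, the slip is harmless but should be repaired by stating the alternative correctly. (A trivial further remark: the two hypotheses are only guaranteed on $[\max\{T_0,T_1\},T)$, so $\tau_0$ should be $\log(\max\{T_0,T_1\}+R)$.)
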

\begin{proof} The case $a>(q-2)/(p-1)$ corresponds to Lemma~4 in \cite{Si}. Let $a=(q-2)/(p-1)$. Following Lemma~2.1 in~\cite{YZ06}, our problem reduces to find $K_0$ such that \eqref{eq:Fest} holds and the function
$G(s)=(T_0+1)^{-a}F((T_0+1)s+1)$ blows up. One has
\begin{equation*}
\begin{array}{l}
\ddot{G}(s) \geq K_1 \<s\>^{-q} (G(s))^p,\\
G(s)\geq \tilde{K}_0 \<s\>^{\frac{q-2}{p-1}},
\end{array}
\end{equation*}
respectively, with $\tilde{K}_0=K_0 \min\{1;(1+R)/(1+T_0)\}$. Eventually, with a larger constant $K_0$ it follows that $\dot G$ is positive, so that from $ \ddot{G}(s) \geq K_1 \tilde{K}_0^{p-1} \<s\>^{-2} (G(s))$ one has $G(s)\ge
\<s\>^{\tilde{K}_0^{p-1}K_2}$. For large $K_0$, the exponent $a\doteq \tilde{K}_0^{p-1}K_1$ satisfies~$a>(q-2)/(p-1)$, and we may conclude the proof. These ideas are
contained in \cite{G}.
\end{proof}
Transforming problem~\eqref{eq:CP2} into~\eqref{eq:CPnew} the statement follows as a consequence of the next proposition. Here we follow~\cite{YZ06},
taking into account of the time-dependence of the nonlinear term.
\begin{prop}\label{prop:non}
Let~$f\in \mathcal C^2(\R^n)$ and $g\in\mathcal C^1(\R^n)$ be nonnegative, compactly supported, such that $f+g \not\equiv 0$. Assume
that~$u\in\mathcal{C}^2([0,T)\times \R^n)$ is the maximal (with respect to the time interval) solution to
\begin{equation}\label{eq:wave20}
\left\{
\begin{array}{l}
u_{tt}-\triangle u=\<t\>^{-(p-1)} |u|^p, \\
u(0,x)=f(x), \\
u_t(0,x)=g(x).
\end{array}
\right.
\end{equation}
If~$p\le p_2(n)$, with~$p_2(n)$ as in~\eqref{eq:p2}, then~$T<\infty$.
\end{prop}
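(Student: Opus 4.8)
The plan is to apply Lemma~\ref{Lem:1YZ} to the spatial average $F(t)\doteq\int_{\R^n}u(t,x)\,dx$, following the test-function strategy of~\cite{YZ06} but keeping track of the time factor $\<t\>^{-(p-1)}$ in the nonlinearity of~\eqref{eq:wave20}. First I would fix $R>0$ with $\supp f\cup\supp g\subseteq\{|x|\le R\}$; since the principal part of~\eqref{eq:wave20} is the free wave operator, finite propagation speed gives $\supp u(t,\cdot)\subseteq\{|x|\le R+t\}$ for every $t$. Integrating the equation and discarding $\int\triangle u\,dx=0$ yields $\ddot F(t)=\<t\>^{-(p-1)}\int_{\R^n}|u|^p\,dx\ge0$, so $F$ is convex; together with $F(0)=\int f\ge0$, $\dot F(0)=\int g\ge0$ and $f+g\not\equiv0$ this makes $F$ positive and nondecreasing, and in particular $\int|u|\,dx\ge F(t)$.

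Two lower bounds on $\int|u|^p$ then furnish the two hypotheses of Lemma~\ref{Lem:1YZ}. A crude Hölder estimate over the ball $\{|x|\le R+t\}$, whose volume is at most $C(R+t)^n$, gives $\int|u|^p\,dx\ge c\,(R+t)^{-n(p-1)}F(t)^p$, whence $\ddot F\ge K_1(R+t)^{-q}F^p$ with $q=(n+1)(p-1)$ after replacing $\<t\>^{-(p-1)}$ by $c\,(R+t)^{-(p-1)}$. For the polynomial lower bound on $F$ itself I would introduce the Yordanov--Zhang test function $\phi(t,x)=e^{-t}\psi(x)$, $\psi(x)=\int_{S^{n-1}}e^{x\cdot\omega}\,d\omega$, which satisfies $\triangle\psi=\psi$, $\phi_t=-\phi$, $\phi_{tt}=\triangle\phi=\phi$, and $\psi(x)\sim c_n|x|^{-(n-1)/2}e^{|x|}$. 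Setting $G(t)=\int u\phi\,dx$ and using the equation and integration by parts, one finds $\ddot G+2\dot G=\<t\>^{-(p-1)}\int|u|^p\phi\,dx\ge0$, so $e^{2t}\dot G$ is nondecreasing; the positivity of the data and of the forcing then yield $G(t)\ge C_0>0$ for $t$ large. A weighted Hölder inequality with weight $\phi$ and exponent $p'=p/(p-1)$, combined with $\int_{|x|\le R+t}\phi^{p'}\,dx\sim c\,(R+t)^{(n-1)(2-p')/2}$ (which uses the asymptotics of $\psi$ and the cancellation of $e^{-p't}$ against $e^{p'|x|}$), gives $\int|u|^p\,dx\ge c\,G^p(R+t)^{-(n-1)(p-2)/2}$. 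Inserting $G\ge C_0$ and integrating $\ddot F\ge c\,(R+t)^{-(p-1)-(n-1)(p-2)/2}$ twice furnishes $F(t)\ge K_0(R+t)^{a}$ with $a=2-(p-1)-(n-1)(p-2)/2$.

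It then remains to match the exponents. Writing $s=p-1$, a direct computation shows that $a>(q-2)/(p-1)$ is equivalent to $(n+1)s^2+(n-1)s-4<0$, that is, to $p<p_0(n+2)$, since $p_0(n+2)-1$ is precisely the positive root of $(n+1)s^2+(n-1)s-4$. Thus the first part of Lemma~\ref{Lem:1YZ} gives $T<\infty$ for all $1<p<p_0(n+2)$ (one checks $a\ge1$ throughout this range). At the endpoint $p=p_0(n+2)$ one has equality $a=(q-2)/(p-1)$, and $q-(p+1)=n(p-1)-2\ge0$ exactly because $p_0(n+2)\ge p_\infty(n)$ for $n=2,3$; hence the second part of Lemma~\ref{Lem:1YZ}, with its sharp constant $K_0(K_1)$, applies and again forces $T<\infty$. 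The residual regime $p\le p_\infty(n)$, relevant only if $p_\infty(n)=p_2(n)$, is in any case covered by the crude functional alone with $a=1$. Since $p_2(n)=\max\{p_0(n+2),p_\infty(n)\}$, this settles every $p\in(1,p_2(n)]$.

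The delicate point will be the critical exponent $p=p_0(n+2)$: here the subcritical slack disappears, the lower bound on $F$ is only borderline (indeed for $n=2$, where $p_0(4)=p_\infty(2)=2$, the double integration of $\ddot F$ yields merely $F\gtrsim R+t$ up to a logarithm), and one must invoke the sharp-constant second alternative of Lemma~\ref{Lem:1YZ} together with the exact value of the exponent in the weighted Hölder bound. Equally, the bookkeeping that converts the extra factor $\<t\>^{-(p-1)}$ into the shift $n\mapsto n+2$ --- it raises $q$ from $n(p-1)$ to $(n+1)(p-1)$ and simultaneously lowers $a$ by $p-1$ --- must be carried out carefully, as must the uniform positivity $G\ge C_0$ when the sign of $\dot G(0)=\int(g-f)\psi\,dx$ is not controlled by the data.
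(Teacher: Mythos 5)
Your subcritical argument is essentially the paper's own: the same functional $F(t)=\int_{\R^n}u\,dx$, the same crude H\"older bound giving $q=(n+1)(p-1)$ in \eqref{eq:F2est}, the same Yordanov--Zhang weighted functional (your $G$ is the paper's $F_1$, and your ODE argument for $G\ge C_0$ is exactly Lemma~\ref{Lem:2YZ}), the same weighted H\"older estimate, and the same identification of the two conditions with $p<p_0(n+2)$ and $p<p_\infty(n)$. The genuine gap is at the critical exponent for $n\ge3$ --- in particular $n=3$, $p=p_0(5)$, which the proposition covers. You claim that since $a=(q-2)/(p-1)$ and $q\ge p+1$, ``the second part of Lemma~\ref{Lem:1YZ} applies.'' It does not apply as stated: that part of the lemma requires \eqref{eq:Fest} to hold with the \emph{specific} constant $K_0=K_0(K_1)$, which may be large, whereas your derivation only yields $F(t)\ge c\,\<t\>^{a}$ with a fixed small constant $c$ determined by the data and the weighted H\"older bound. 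For $n=1$ ($p=3$) and $n=2$ ($p=2$) this obstruction disappears because there $a=1$: double integration of $\ddot F\gtrsim\<t\>^{-1}$ gives $F\gtrsim\<t\>\ln\<t\>$, and the logarithm lets $F(t)\ge K_0\<t\>$ hold for $t\ge T_0(K_0)$ with \emph{any} prescribed $K_0$ --- this is exactly how the paper closes the 1d and 2d critical cases, and your parenthetical remark about $n=2$ gestures at it. But for $n\ge3$ one has $a=(q-2)/(p-1)>1$ at $p=p_0(n+2)$, and then re-inserting $F\gtrsim\<t\>^{a}$ into $\ddot F\gtrsim\<t\>^{-q}F^p$ merely reproduces $\ddot F\gtrsim\<t\>^{a-2}$ and hence $F\gtrsim\<t\>^{a}$ again, with no logarithmic gain and no improvement of the constant: the iteration stalls, and your proof cannot conclude.

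The paper fills precisely this hole with an additional, genuinely different ingredient for the critical case in $n\ge3$: it passes to spherical means/radial solutions, derives the lower bound $Ru(t,\rho)\gtrsim (1+t-\rho)^{-\frac{n+1}2 p+n+1}$ for the Radon transform, and combines the $L^p$-boundedness of a fractional-integral type operator with the critical identity $\frac{n+1}2 p^2-\frac{n+3}2 p=1$ to obtain $\int_{\R^n}|u(t,x)|^p\,dx\gtrsim \<t\>^{(n-1)-(n-1)\frac p2}\ln\<t\>$, hence $F(t)\gtrsim\<t\>^{a}\ln\<t\>$; only then does the second part of Lemma~\ref{Lem:1YZ} become applicable. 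Without this step (or some substitute mechanism producing an arbitrarily large constant in \eqref{eq:Fest}), your argument proves blow-up on $(1,p_2(n))$ for all $n$, and at the endpoint only for $n=1,2$, but not at $p=p_0(5)$ for $n=3$. A minor additional slip: the parenthetical ``one checks $a\ge1$ throughout this range'' fails for $n=1$ and $p\in(2,1+\sqrt2)$, where $a=3-p<1$; that regime is rescued only by the Fujita branch with $a=1$, as indeed your ``residual regime'' remark implicitly does.
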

%
Let~$R>0$ be such that~$\supp f, \supp g\subset B(R)$. Therefore, $\supp u(t,\cdot)\subset B(R+t)$. Without loss of generality we assume~$R=1$. Let us
define
\[
F(t)\doteq \int_{\R^n} u(t,x)\,dx.
\]
Thanks to the finite speed of propagation of~$u$ and by H\"older's inequality
\begin{align} \label{eq:F2}
\begin{array}{cc} \ddot F(t)
    = \<t\>^{-(p-1)}\,\int_{\R^n}|u(t,x)|^p\,dx \\
    = \<t\>^{-(p-1)}\,\int_{B(\<t\>)}|u(t,x)|^p\,dx \gtrsim \<t\>^{-(n+1)(p-1)}\,|F(t)|^p.
\end{array}
\end{align}
In order to apply Lemma~\ref{Lem:1YZ} we need to establish that $F(t)$ is positive. For this reason we consider the functions
\[
\phi_1(x)\doteq \int_{S^{n-1}} e^{x\cdot \omega}\,d\omega, \qquad  \psi_1(t,x)\doteq\phi_1(x)e^{-t}
\]
and
\[
F_1(t)\doteq \int_{\R^n} u(t,x)\psi_1(t,x)\,dx.
\]
It follows that
\[
\ddot F(t) \gtrsim \<t\>^{-(p-1)}\,|F_1(t)|^p\,\Bigl(\int_{B(1+t)} (\psi_1(t,x))^{\frac{p}{p-1}}\,dx\Bigr)^{-(p-1)}.
\]
Let us estimate the last integral. Recalling that $\psi_1(t,x) = e^{-t}\phi_1(x)$ we see that
\[
\int_{B(K)} (\psi_1(t,x))^{\frac{p}{p-1}}\,dx\le C(K,A,p) \<t\>^{-A}
\]
for any fixed $K<1+t$ and $A>0$.
By using
\[
\phi_1(x)\lesssim |x|^{-\frac{n-1}2}\,e^{|x|} \quad \text{as~$|x|\to\infty$}
\]
(see \cite{CH}, pages 184,185) we get for large $t$ and $K$ the estimate
\[
\int_{B(1+t)\setminus B(K)} (\psi_1(t,x))^{\frac{p}{p-1}}\,dx\lesssim \int_K^{t+1} \<\rho\>^{n-1-\frac{(n-1)p}{2(p-1)}}e^{\frac{p}{p-1}(\rho-t)}\,d\rho.
\]
Putting
\[
\alpha\doteq n-1-(n-1)p/(2(p-1)),
\]
we have
\[
\int_K^{t+1} \<\rho\>^{\alpha}e^{\frac{p}{p-1}(\rho-t)}\,d\rho\le \frac{p-1}{p} e^{\frac{p}{p-1}}(2+t)^\alpha
-\frac{\alpha(p-1)}{p}\int_K^{t+1} e^{\frac{p}{p-1}(\rho-t)}\<\rho\>^{\alpha-1}\,d\rho.
\]
If~$\alpha\ge 0$, i.e. $p\ge 2$, we may immediately conclude
\begin{equation}\label{eq:inter}
\int_K^{t+1} \<\rho\>^{\alpha}e^{\frac{p}{p-1}(\rho-t)}\,d\rho\lesssim \<t\>^\alpha.
\end{equation}
The same estimate holds if $\alpha<0$, i.e.~$p\in(1,2)$, since we may write
\[
\left(1+\frac{\alpha(p-1)}{p(1+K)}\right) \int_K^{t+1} \<\rho\>^{\alpha}e^{\frac{p}{p-1}(\rho-t)}\,d\rho\lesssim \<t\>^{\alpha}
\]
and for large $K$ and $t$ we turn to \eqref{eq:inter}.  As a conclusion
\[ \ddot F(t)  \gtrsim \<t\>^{-n(p-1)+(n-1)\frac{p}2}\,|F_1(t)|^p . \]
To estimate~$|F_1(t)|^p$ the sign of the nonlinear term comes into play. More precisely, the following result holds for any smooth solution to
$u_{tt}-\Delta u=G(t,x,u)$ with positive~$G$.
\begin{lemma}\label{Lem:2YZ}\emph{[Lemma~2.2 in~\cite{YZ06}]}
There exists a positive constant~$t_0$ such that it holds
\begin{equation}\label{eq:lem2in}
F_1(t) \gtrsim \frac{1}{2}(1-e^{-2t})\int_{\R^n}(f(x)+g(x))\phi_1(x)\,dx+e^{-2t}\int_{\R^n}f(x)\phi_1(x)\,dx
\end{equation}
for~$t\geq t_0$.
\end{lemma}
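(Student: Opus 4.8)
The plan is to exploit the fact that $\phi_1$ is an eigenfunction of the Laplacian. Since each plane wave satisfies $\triangle e^{x\cdot\omega}=|\omega|^2 e^{x\cdot\omega}=e^{x\cdot\omega}$ for $\omega\in S^{n-1}$, integrating over the sphere gives $\triangle\phi_1=\phi_1$. This suggests reducing the spatial problem to a scalar ordinary differential inequality. To this end I would set
\[
m(t)\doteq\int_{\R^n}u(t,x)\phi_1(x)\,dx,
\]
so that $F_1(t)=e^{-t}m(t)$ by the definition of $\psi_1$.

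First I would differentiate twice under the integral sign and use the equation $u_{tt}=\triangle u+G(t,x,u)$ to obtain
\[
\ddot m(t)=\int_{\R^n}\big(\triangle u+G\big)\phi_1\,dx.
\]
Since $u(t,\cdot)$ is compactly supported (finite speed of propagation), the boundary terms vanish when integrating by parts, so that $\int \triangle u\,\phi_1\,dx=\int u\,\triangle\phi_1\,dx=\int u\,\phi_1\,dx=m(t)$. Because $G\ge 0$ and $\phi_1>0$, the remaining term is nonnegative, and I arrive at the key differential inequality
\[
\ddot m(t)\ge m(t),\qquad m(0)=\int_{\R^n}f\phi_1\,dx,\quad \dot m(0)=\int_{\R^n}g\phi_1\,dx.
\]

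The main point is then to convert this second-order inequality into a pointwise lower bound, and this is the step I expect to be the genuine obstacle: $\ddot m\ge m$ does not yield $m\ge M$ by a naive maximum principle. I would therefore introduce the comparison function $M(t)=m(0)\cosh t+\dot m(0)\sinh t$, which solves $\ddot M=M$ with the same data, set $w=m-M$, and use a two-step integrating factor. From $\ddot w\ge w$ with $w(0)=\dot w(0)=0$ one checks that $e^{-t}(\dot w+w)$ is nondecreasing, hence nonnegative, and then that $e^{t}w$ is nondecreasing, hence nonnegative; therefore $m(t)\ge M(t)$ for all $t\ge 0$.

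Finally I would translate back: multiplying by $e^{-t}$ gives
\[
F_1(t)=e^{-t}m(t)\ge e^{-t}\big(m(0)\cosh t+\dot m(0)\sinh t\big)=\tfrac{m(0)+\dot m(0)}{2}+\tfrac{m(0)-\dot m(0)}{2}\,e^{-2t},
\]
and a direct algebraic rearrangement shows that this right-hand side equals $\frac12(1-e^{-2t})\int_{\R^n}(f+g)\phi_1\,dx+e^{-2t}\int_{\R^n}f\phi_1\,dx$, which is exactly the claimed bound (with implied constant $1$, valid for every $t\ge0$, so that any $t_0\ge 0$ suffices). Everything apart from the comparison argument is integration by parts and elementary ODE bookkeeping; the integrating-factor trick is what makes the delicate step rigorous.
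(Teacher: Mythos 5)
Your proof is correct and complete; note that the paper itself does not prove this lemma at all but simply cites Lemma~2.2 of~\cite{YZ06}, so the relevant comparison is with that argument. Your route is the same in its core idea --- exploit $\triangle\phi_1=\phi_1$, integrate by parts using the compact support of $u(t,\cdot)$, and reduce to a second-order ODE inequality --- but differs in the bookkeeping. Yordanov--Zhang work directly with $F_1(t)=\int u\,\psi_1\,dx$ and use that $\psi_1=e^{-t}\phi_1$ itself solves the free wave equation ($\p_t^2\psi_1=\triangle\psi_1=\psi_1$), which yields $\ddot F_1+2\dot F_1=\int G\,\psi_1\,dx\geq 0$; this is first-order in $\dot F_1$, so one integrates $e^{2t}\dot F_1$ twice with no comparison argument needed, getting $F_1(t)\geq F_1(0)+\dot F_1(0)\,\frac{1-e^{-2t}}{2}$. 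Your substitution $m(t)=e^tF_1(t)=\int u\,\phi_1\,dx$ transforms this into the same inequality in different coordinates, since $\ddot m-m=e^t\bigl(\ddot F_1+2\dot F_1\bigr)$, but at the price that $\ddot m\geq m$ does not integrate naively --- which you correctly recognize as the delicate point and handle with the two-step integrating factor (first $e^{-t}(\dot w+w)$, then $e^{t}w$); this factorization of $\p_t^2-1=(\p_t+1)(\p_t-1)$ is exactly what replaces the trivial integration in the $F_1$ variable. Both routes produce the identical bound $F_1(t)\geq \frac{1+e^{-2t}}{2}\int f\phi_1\,dx+\frac{1-e^{-2t}}{2}\int g\phi_1\,dx$, which after rearrangement is the right-hand side of \eqref{eq:lem2in} with implied constant $1$ and any $t_0\geq0$, so your version is in fact slightly stronger than the stated $\gtrsim$ for $t\geq t_0$.
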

In particular, due to our assumption on $f$ and $g$, it holds $F_1(t)>c>0$.
Therefore, we proved
\begin{equation}\label{eq:F2bis}
\ddot F(t) \gtrsim \<t\>^{-n(p-1)+(n-1)\frac{p}2} = \<t\>^{-\frac{n+1}2\,p+n}.
\end{equation}
Integrating twice we obtain
\begin{equation}\label{eq:F}
F(t) \gtrsim \<t\>^{\max\left\{-\frac{n+1}2\,p+n+2,1\right\}} + t \dot{F}(0) + F(0) \gtrsim \<t\>^{\max\left\{-\frac{n+1}2\,p+n+2,1\right\}},
\end{equation}
since~$\dot{F}(0)\ge 0$ and $F(0)\ge 0$.
\subsection{The subcritical case}
Recalling \eqref{eq:F2} we may apply the first part of Lemma~\ref{Lem:1YZ} once we have one of the following conditions:
\begin{align}
\label{eq:Straussbirth}
-\frac{n+1}2\,p+n+2 & > \frac{(n+1)(p-1)-2}{p-1}\,, \\
\label{eq:Fujitabirth}
1 & > \frac{(n+1)(p-1)-2}{p-1}\,.
\end{align}
Condition~\eqref{eq:Straussbirth} corresponds to~$p<p_0(n+2)$, whereas condition~\eqref{eq:Fujitabirth} corresponds to~$p<p_\infty(n)$, hence, we derived
$p<\max\{p_0(n+2), p_\infty(n)\}$.

\subsection{Critical case in 1d} First, let~$n=1$ and~$p=3$. By~\eqref{eq:F2} it follows~\eqref{eq:F2est} with~$q=4$. Setting~\eqref{eq:F}
into~\eqref{eq:F2} leads to
\[ \ddot F(t) \gtrsim \<t\>^{-4}\,F(t)^3\gtrsim \<t\>^{-1}. \]
Integrating twice implies $F(t)\gtrsim \<t\>\,\log \<t\>$. Therefore, for any~$K_0>0$ there exists~$T_0>0$ such that~\eqref{eq:Fest} holds with~$a=1$. The
proof follows from Lemma~\ref{Lem:1YZ}.

\subsection{Critical case in 2d}  By~\eqref{eq:F2} and~\eqref{eq:F} we have again  $\ddot F(t) \gtrsim  \<t\>^{-1}$. Consequently, the conclusion follows.

\subsection{Critical case in $n$ dimensions with $n\ge 3$} We notice that $p_2(n)=p_0(n+2) < 2$. Due to the lack of $C^2$ regularity of solutions  we shall prove a blow-up behaviour for the spherical mean of $u$,
that is, for
\[ \tilde u(t,r)=\frac{1}{\omega_n}\int_{|\omega|=1} u(t,r\omega)\,dS_\omega.\]
This mean satisfies the differential inequality (see \cite{JZ})
\[ \tilde u_{tt}-\Delta \tilde u\ge \<t\>^{-(p-1)}|\tilde u|^p.\]
We can assume that $u$ is radial. Following \cite{YZ06} we consider the Radon transform of~$u$ on the hyper-planes orthogonal to a fixed $\omega\in \R^n$:
\[ Ru(t,\rho) \doteq \int_{x\cdot w=\rho} u(t,x)\,dS_x\,, \]
where~$dS_x$ is the Lebesgue measure of $\{ x: \ x\cdot w=\rho \}$. One can see that $Ru$ is independent of~$w$ and that
\begin{equation}\label{eq:Ru}
Ru(t,\rho)= c_n \int_{|\rho|}^\infty u(t,r)\,(r^2-\rho^2)^{\frac{n-3}2}\,r\,dr\,.
\end{equation}
We will assume that~$\rho\geq0$. Since~$Ru$ satisfies
\[ \partial_t^2 Ru - \partial_\rho^2 Ru = \<t\>^{-(p-1)}\,R(|u|^p) \]
and~$f\geq0$, $g\geq0$, it follows
\[ Ru(t,\rho) \geq \frac12\,\int_0^t \<s\>^{-(p-1)}\, \int_{\rho-(t-s)}^{\rho+(t-s)} R(|u|^p)(s,\rho_1) \,d\rho_1\,ds\,. \]
Since~$\supp R(|u|^p)(s,\cdot) \subset B(s+1)$, following~\cite{YZ06} we may estimate
\begin{align*}
Ru(t,\rho)
    & \geq \frac12\,\int_0^{\frac{t-\rho-1}{2}} \<s\>^{-(p-1)}\, \int_{\R}R(|u|^p)(s,\rho_1) \,d\rho_1\,ds \\
    & = \frac12\,\int_0^{\frac{t-\rho-1}{2}} \<s\>^{-(p-1)}\, \int_{\R^n} |u(s,y)|^p\,dy\,ds=
     \frac12\,\int_0^{\frac{t-\rho-1}{2}} \ddot F(s)\,ds.
\end{align*}
Recalling~\eqref{eq:F2bis} we get
\[ Ru(t,\rho) \geq \frac12\,\int_0^{\frac{t-\rho-1}{2}} \<s\>^{-\frac{n+1}2\,p+n}\,ds. \]
Since~$p=p_2(n)\leq 2$ it holds
\begin{equation}
\label{eq:Ruest}
Ru(t,\rho)\gtrsim 
(1+t-\rho)^{-\frac{n+1}2\,p+n+1}. 
\end{equation}
Coming back to~\eqref{eq:Ru} and recalling that~$\supp u(t,\cdot)\subset B(1+t)$, since~$r+\rho\leq 2r$ in the integral, we may estimate
\begin{equation}\label{eq:Rufund}
Ru(t,\rho) = c_n \int_\rho^{1+t} u(t,r)\,r(r+\rho)^{\frac{n-3}2}\,(r-\rho)^{\frac{n-3}2}\,dr \leq c_n 2^{\frac{[n-3]^+}2} \int_\rho^{1+t} u(t,r)\,r^{\frac{n-1}2}\,(r-\rho)^{\frac{n-3}2}\,dr\,.
\end{equation}
The operator~$T: L^p(\R)\to L^p(\R)$ defined by
\[ Tf (\tau) \doteq \frac1{|1+t-\tau|^{\frac{n-1}2}}\int_\tau^{1+t} f(r)\,|r-\tau|^{\frac{n-3}2}\,dr  \qquad \text{for any~$\tau\in\R$,} \]
is bounded. Therefore, if we put
\[ f(r)=\begin{cases}
|u(t,r)|\,r^{\frac{n-1}p} & \text{if~$r\geq0$,}\\
0 & \text{if~$r\leq0$,}
\end{cases}\]
so that~$f(r)^p=|u(t,r)|^p\,r^{n-1}$ for~$r\geq0$, then we get
\[ \int_0^{1+t} \bigg(\frac1{|1+t-\rho|^{\frac{n-1}2}}\int_\rho^{1+t} |u(t,r)|\,r^{\frac{n-1}p}\,(r-\rho)^{\frac{n-3}2}\,dr\bigg)^p d\rho \lesssim \int_0^\infty |u(t,r)|^p\,r^{n-1}\,dr = C\int_{\R^n}|u(t,x)|^p\,dx \,. \]
Due to~$p\leq2$ and~$r\geq\rho$ it holds $r^{\frac{n-1}p}\geq r^{\frac{n-1}2}\rho^{(n-1)\left(\frac1p-\frac12\right)}$, so that, by~\eqref{eq:Rufund},
we conclude 
\[ \int_0^{1+t} \frac{(Ru(t,\rho))^p}{|1+t-\rho|^{\frac{n-1}2\,p}}\,\rho^{(n-1)-(n-1)\frac{p}2} d\rho \lesssim \int_{\R^n}|u(t,x)|^p\,dx\,. \]
Thanks to~\eqref{eq:Ruest} we get
\[ \int_{\R^n}|u(t,x)|^p\,dx \gtrsim \int_0^{1+t} (1+t-\rho)^{-\frac{n+1}2\,p^2+\frac{n+3}2\,p}\,\rho^{(n-1)-(n-1)\frac{p}2} d\rho\,. \]
Recalling that~$p=p_2(n)$ we may use
\[ \frac{n+1}2\,p^2-\frac{n+3}2\,p = 1, \]
and obtain
\[ \int_{\R^n}|u(t,x)|^p\,dx \gtrsim \int_0^{1+t} (1+t-\rho)^{-1}\,\rho^{(n-1)-(n-1)\frac{p}2} d\rho \gtrsim \<t\>^{(n-1)-(n-1)\frac{p}2}\ln \<t\> . \]
%
%
Thus,
\[ \ddot F(t)\gtrsim \<t\>^{-(p-1)}\int_{\R^n}|u|^p\,dx \gtrsim \<t\>^{(n-1)-(n+1)\frac{p}2+1}\ln \<t\>\,, \]
%
hence,
\[ F(t)\gtrsim \<t\>^{(n+1)-(n-1)\frac{p}2}\ln \<t\>\,. \]
Similarly to the case~$n=1$ the end of the proof follows by Lemma~\ref{Lem:1YZ}. 


\section{Proof of Theorem~\ref{Thm:ex}}
Let ~$p,q \in[1,\infty]$. As in~\cite{LX} we define
\[ \|f\|_{(p,q)} \doteq \| f(r \omega) \, r^{\frac{n-1}p} \|_{L_r^p([0,\infty),L_\omega^q(S^{n-1}))}\,. \]
It holds~$\|f\|_{(p,p)}=\|f\|_{L^p}$ and H\"older's inequality
\begin{equation}\label{eq:Holder}
\|f_1\,f_2\|_{(p,q)} \lesssim \|f_1\|_{(p_1,q_1)}\,\|f_2\|_{(p_2,q_2)}
\end{equation}
is valid if
\[ \frac{1}{p}=\frac{1}{p_1}+\frac{1}{p_2}\le 1, \qquad \frac{1}{q}=\frac{1}{q_1}+\frac{1}{q_2}\le 1.\]
Moreover, since~$S^{n-1}$ is compact, it holds
\begin{equation}\label{eq:emb}
\|f\|_{(p,q_1)} \lesssim \|f\|_{(p,q_2)}\qquad \text{for any~$q_2\ge q_1$.}
\end{equation}
Let $i,j=1,2$ with $i\not=j$, we introduce the vector fields
\begin{gather*}
\Gamma = (D, L_0, L_j, \Omega_{ij}), \qquad D=(\partial_t,\partial_j), \quad L_0= \<t\>\p_t+x\cdot\nabla, \\
L_j= \<t\>\p_j + x_j\p_t, \quad \Omega_{ij} = x_i\p_j-x_j\p_i,
\end{gather*}
and the norms
\begin{equation*}
\begin{array}{l}
 \|f\|_{\Gamma,s,(p,q)} = \sum_{|\alpha|\leq s} \|\Gamma^\alpha f\|_{(p,q)}\,,\\
 \|f\|_{\Gamma,s,p}=\|f\|_{\Gamma,s,(p,p)}\,,\\
  \|f\|_{\Gamma,s,\infty}=\sum_{|\alpha|\leq s} \|\Gamma^\alpha f\|_\infty\,.
  \end{array}
\end{equation*}
To a given $\alpha$ the following relations hold with suitable $a_\beta$ and $b_\beta$:
\begin{align}
\label{eq:comLG}
[\square, \Gamma^\alpha]
    & = \sum_{|\beta|\leq |\alpha|-1} a_\beta\,\Gamma^\beta\square, \\
\label{eq:comDG}
[D, \Gamma^\alpha]
    & = \sum_{|\beta|\leq |\alpha|-1} b_\beta\,\Gamma^\beta D.
\end{align}
By using arguments from~\cite{Kl} one has the following Sobolev-type inequalities in these generalized Sobolev spaces:
\begin{eqnarray}
\label{eq:Kl1} &&\|w(t,\cdot)\|_{\infty} \lesssim \<t\>^{-\frac{n-1}2}\,\|w(t,\cdot)\|_{\Gamma,s,2} \quad \text{if~$s>n/2$,}
\\
\label{eq:Kl2Z} &&\|w(t,\cdot)\|_{\infty} \lesssim \<t\>^{1-\frac{n-1}2}\,\left(\|w(t,\cdot)\|_{\Gamma,s,2}+\|Dw(t,\cdot)\|_{\Gamma,s,2}\right) \quad
\text{if~$s+1>n/2$,}
\\
\label{eq:Kl3Z}
&&\|w(t,\cdot)\|_{q} \lesssim \<t\>^{-(n-1)\left(\frac12-\frac1q\right)}\|w(t,\cdot)\|_{\Gamma,s,2}\quad \text{if~} \quad 2\le q<\infty\,, \quad \frac{1}{q}\ge \frac{1}{2}-\frac{s}{n}\ge 0,
\end{eqnarray}
for any $t>0$ and any $w(t,\cdot)$ such that right-hand sides are well-defined. The previous statements can be found in \cite{Z}.

Energy estimates in these spaces are given by
\begin{equation}\label{eq:energy}
\|Du\|_{\Gamma,s,2} \lesssim \|\nabla u_0\|_{\Gamma,s,2}+\|u_1\|_{\Gamma,s,2} + \int_0^t \|f(\tau,x)\|_{\Gamma,s,2}\,d\tau,\qquad s\in\N
\end{equation}
for solutions to the Cauchy problem for the inhomogeneous wave equation
\begin{equation}\label{eq:CPinhwave}
\begin{cases}
u_{tt}-\triangle u = f(t,x), & t\geq0, \quad x\in\R^n,\\
u(0,x)=u_0(x), & x\in\R^n,\\
u_t(0,x)=u_1(x), & x\in\R^n.
\end{cases}
\end{equation}
Indeed, we may combine \eqref{eq:comLG}, \eqref{eq:comDG} with the classical energy estimate
\[ \|Du\|_2 \lesssim \|\nabla u_0\|_2+\|u_1\|_{L^2} + \int_0^t \|f(\tau,x)\|_{L^2}\,d\tau.\]
It is also necessary to estimate $\|u\|_{\Gamma,s,2}$. Here the space dimension $n=2$ comes into play.
\begin{lemma}\label{lem:CPlin}
Let~$n=2$, and~$u$ be the solution to~\eqref{eq:CPinhwave}. Then, for any~$\epsilon>0$ there exists~$\delta(\epsilon)>0$, satisfying~$\delta(\epsilon)\to0$ as~$\epsilon\to0$, such that
\[ \|u(t,\cdot)\|_{\Gamma,s,2} \lesssim \|u_0\|_{\Gamma,s,2} + t^\delta \|u_1\|_{\Gamma,s,(1+\epsilon,2)} + \int_0^t (t-\tau)^\delta\,\|f(\tau,\cdot)\|_{\Gamma,s,(1+\epsilon,2)}\,d\tau\,.\]
\end{lemma}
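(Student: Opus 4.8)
The plan is to commute the vector fields through the equation to reduce to the case $s=0$, represent $u$ by Duhamel's formula for the free wave equation, and then reduce everything to a single sharp $L^2$ bound for the propagator $W(t):=\sin(t\sqrt{-\triangle})/\sqrt{-\triangle}$, which I would obtain by interpolation. First I would apply $\Gamma^\alpha$ to \eqref{eq:CPinhwave}: by \eqref{eq:comLG} one has $\square(\Gamma^\alpha u)=\Gamma^\alpha f+\sum_{|\beta|\le|\alpha|-1}a_\beta\,\Gamma^\beta f$, so each $\Gamma^\alpha u$ solves an inhomogeneous wave equation whose source is a finite combination of the $\Gamma^\beta f$ with $|\beta|\le s$, and whose Cauchy data, after eliminating time derivatives through $\partial_t^2u=\triangle u+f$, are combinations of spatial derivatives of $u_0,u_1$ (and of $f(0,\cdot)$). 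Summing over $|\alpha|\le s$ reduces the lemma to $s=0$, with the $u_0$-data measured in the $L^2$-based norm and the $u_1$- and $f$-data in the $(1+\epsilon,2)$-based norm; this step is routine.

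For $s=0$ I would write
\[ u(t,\cdot)=\cos(t\sqrt{-\triangle})\,u_0+W(t)\,u_1+\int_0^t W(t-\tau)f(\tau,\cdot)\,d\tau. \]
Since $\cos(t\sqrt{-\triangle})$ is bounded on $L^2$ uniformly in $t$, the first term contributes $\|u_0\|_{L^2}$ with no loss, which yields the $\|u_0\|_{\Gamma,s,2}$ summand. Everything else rests on the single estimate
\[ \|W(t)g\|_{L^2(\R^2)}\lesssim t^{\delta}\,\|g\|_{(1+\epsilon,2)},\qquad \delta=\delta(\epsilon)\to0, \]
applied to $g=u_1$ and, inside the Duhamel integral, to $g=f(\tau,\cdot)$ at elapsed time $t-\tau$; integrating in $\tau$ then produces the last summand of the statement.

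The heart of the matter is this propagator estimate, which I would prove by interpolation. Observing that $\|g\|_{(p,2)}=\big\|\,\|g(r\cdot)\|_{L^2_\omega}\,\big\|_{L^p(r\,dr)}$ exhibits the scale $(p,2)$ as the Bochner space $L^p(r\,dr;L^2(S^1))$, it suffices to interpolate two endpoints. At $p=2$, Plancherel together with $|\sin(t|\xi|)/|\xi||\le t$ gives the trivial bound $\|W(t)g\|_{L^2}\le t\,\|g\|_{(2,2)}$. At $p=1$ the claim is the uniform-in-$t$ bound $\|W(t)g\|_{L^2}\lesssim\|g\|_{(1,2)}$. Interpolating in $p$ (the target space $L^2$ kept fixed) produces $\|W(t)g\|_{L^2}\lesssim t^{\theta}\|g\|_{(1+\epsilon,2)}$ with $\theta=2\epsilon/(1+\epsilon)\to0$; the scaling $g\mapsto g(\lambda\,\cdot)$ confirms these powers, forcing the exponent $a=2-2/p$ in any estimate $\|W(t)g\|_{L^2}\lesssim t^a\|g\|_{(p,2)}$, i.e. $a=1,\,0,\,2\epsilon/(1+\epsilon)$ at $p=2,1,1+\epsilon$, so one may take $\delta(\epsilon)=2\epsilon/(1+\epsilon)$.

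I expect the endpoint $p=1$ to be the only genuinely nontrivial point. Its $t$-independence cannot come from the size of the fundamental solution: writing $W(t)g=K_t*g$ with $K_t(y)=(2\pi)^{-1}(t^2-|y|^2)^{-1/2}$ on $|y|<t$, one has $\|K_t\|_{L^1}=t$, so Young's inequality only recovers the trivial $p=2$ bound and the gain must come from dispersion. I would extract it by duality: since $W(t)$ is self-adjoint, for every $h$ with $\|h\|_{L^2}=1$ Cauchy--Schwarz on each circle $|x|=r$ gives
\[ \big|\,(g,W(t)h)_{L^2}\,\big|\le\|g\|_{(1,2)}\,\sup_{r>0}\|(W(t)h)(r\cdot)\|_{L^2_\omega}, \]
so that $\|W(t)g\|_{L^2}=\sup_{\|h\|_{L^2}=1}\big|(g,W(t)h)_{L^2}\big|$ reduces the endpoint to the uniform trace estimate $\sup_{\|h\|_{L^2}=1}\sup_{r>0}\|(W(t)h)(r\cdot)\|_{L^2_\omega}\lesssim 1$. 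This expresses that the one derivative of smoothing carried by $1/\sqrt{-\triangle}$ exceeds the $1/2$ needed for a codimension-one trace in $\R^2$, together with the oscillation of $\sin(t\sqrt{-\triangle})$. I would establish it either by decomposing $h$ into angular modes $e^{im\theta}$, reducing $W(t)$ to Hankel operators whose Bessel kernels satisfy bounds uniform in $t,r,m$, or by a Schur test on the quadratic form arising from $\int_{S^1}e^{ir\omega\cdot\zeta}\,d\omega=2\pi J_0(r|\zeta|)$ and the decay $|J_0(s)|\lesssim(1+s)^{-1/2}$. The uniformity in $t$ of this trace bound is precisely the crux of the argument.
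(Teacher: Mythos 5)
Your reduction to $s=0$ via \eqref{eq:comLG}, the Duhamel splitting, and the scaling count (which correctly forces $\delta=2\epsilon/(1+\epsilon)$, the same exponent the paper obtains) are all sound. The gap is in what you yourself call the crux: the endpoint estimate $\|W(t)g\|_{L^2(\R^2)}\lesssim\|g\|_{(1,2)}$ uniformly in $t$ is \emph{false}, and so is the dual trace bound $\sup_{r>0}\|(W(t)h)(r\cdot)\|_{L^2_\omega}\lesssim\|h\|_{L^2}$. Indeed, for radial $g$ one has $\|g\|_{(1,2)}=(2\pi)^{-1/2}\|g\|_{L^1(\R^2)}$, so your endpoint would assert $\|W(t)g\|_{L^2}\lesssim\|g\|_{L^1}$ for radial data; testing with a nonnegative approximate identity $g_\delta$ and using Fatou (since $W(t)g_\delta=K_t*g_\delta\to K_t$ a.e.) this forces $K_t=(2\pi)^{-1}(t^2-|x|^2)_+^{-1/2}\in L^2(\R^2)$, whereas $\|K_t\|_{L^2}^2=\frac1{2\pi}\int_0^t\frac{r\,dr}{t^2-r^2}$ diverges logarithmically at the light cone. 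On the dual side, boundedness of your trace functional at $r=0$ would require $h\mapsto(W(t)h)(0)$ to be bounded on $L^2$, i.e. $\sin(t|\xi|)/|\xi|\in L^2(\R^2_\xi)$, and $\int_{\R^2}\sin^2(t|\xi|)\,|\xi|^{-2}\,d\xi$ diverges logarithmically at high frequency: $\dot H^1(\R^2)$ does not embed into $L^\infty_rL^2_\omega$, and the obstruction is exactly the radial mode. Your proposed rescue routes hit the same wall: in the angular decomposition one has $\int_0^\infty J_m(s)^2\,s^{-1}\,ds=(2m)^{-1}$ for $m\geq1$, but this integral diverges for $m=0$, so there is no bound uniform over the modes; and the decay $|J_0(s)|\lesssim(1+s)^{-1/2}$ only reproduces the same logarithm. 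With one endpoint false the interpolation collapses, and you cannot replace it by endpoints $p=1+\epsilon'$ without circularity.

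The failure is only logarithmic, which is why the non-endpoint statement survives but must be proved directly rather than by interpolation from $p=1$. This is what the paper does, following \cite{LX}: rescale $x=ty$, so that with $G(y)=u_1(ty)$ one has $\|u(t,\cdot)\|_{2}\lesssim\|u_0\|_{2}+t^2\|G\|_{H^{-1}(\R^2)}$; then estimate $\|G\|_{H^{-1}}\lesssim\|G\|_{(1+\epsilon,2)}$ by duality, H\"older \eqref{eq:Holder}--\eqref{eq:emb}, and the Sobolev embedding $H^1(\R^2)\hookrightarrow L^{q'}$ with $q'=(1+\epsilon)/\epsilon<\infty$ --- an embedding valid for every finite $q'$ but with a constant blowing up as $q'\to\infty$, which is precisely the quantitative form of your endpoint failure. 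Since $\|G\|_{(1+\epsilon,2)}=t^{-2/(1+\epsilon)}\|u_1\|_{(1+\epsilon,2)}$, this yields the factor $t^{2-2/(1+\epsilon)}=t^{\delta}$, and the inhomogeneous term then follows by Duhamel exactly as in your plan. Equivalently, in your notation: after rescaling to unit time, the estimates that are actually true and do the work are $\|W_1G\|_{L^2}\lesssim\|G\|_{H^{-1}}$ (the symbol $\sin|\xi|/|\xi|$ is bounded by $\min(1,|\xi|^{-1})$) together with $\|G\|_{H^{-1}}\lesssim\|G\|_{(1+\epsilon,2)}$; that pair is the correct substitute for your false $p=1$ endpoint.
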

\begin{proof}
Due to \eqref{eq:comLG} it suffices to consider the case $s=0$. First, let~$f\equiv0$. Following~\cite{LX}, by using the change of variables $x=ty$, we may
estimate
\[ \|u(t,\cdot)\|_{2} \lesssim \|u_0\|_{2} + t^2 \|G\|_{H^{-1}(\R^2_y)}\,, \qquad \text{where~$G(y)=u_1(ty)$.} \]
Recalling that
\[ \|G\|_{H^{-1}} = \sup_{v\in H^1,v\neq0} \frac{\int_{\R^2}G(y)v(y)\,dy}{\|v\|_{H^1}}, \]
by virtue of \eqref{eq:Holder}-\eqref{eq:emb} and Sobolev embeddings it holds
\[ \|Gv\|_{L^1} \lesssim \|G\|_{(q,2)}\,\|v\|_{(q',2)} \lesssim \|G\|_{(q,2)}\,\|v\|_{q'} \lesssim \|G\|_{(q,2)}\,\|v\|_{H^1}\,, \]
where~$q=1+\epsilon$ with some~$\epsilon\in(0,1)$. Since
\[ \|G\|_{(q,2)} \lesssim t^{-\frac{n}q}\,\|u_1\|_{(q,2)}\,,\]
summarizing, we proved that
\[ \|u(t,\cdot)\|_{2} \lesssim \|u_0\|_{2} + t^{2(1-\frac1{1+\epsilon})} \|u_1\|_{(1+\epsilon,2)}. \]
The case $f\not\equiv 0$ follows by Duhamel's principle.
\end{proof}
Now we come back to the semi-linear problem and for any~$T>0$, we introduce the space~$X(T)$ with norm
\[ \|u\|_{X(T)} \doteq \sup_{t\in[0,T]} \left( \<t\>^{-\delta}\|u\|_{\Gamma,1,2}+\|Du\|_{\Gamma,1,2}\right), \]
where~$\delta$ is given by Lemma~\ref{lem:CPlin}. For any~$w\in X(T)$ let~$u=S[w]$ be the solution to
\[ u_{tt}-\triangle u = \<t\>^{-(p-1)}|w|^p, \qquad u(0,x)=u_0(x),\quad u_t(0,x)=u_1(x) \]
with compactly supported data. Thanks to Lemma~\ref{lem:CPlin} for~$s=1$ we may estimate
\[ \|u(t,\cdot)\|_{\Gamma,1,2} \lesssim \|u_0\|_{\Gamma,1,2} + t^\delta \|u_1\|_{\Gamma,1,(1+\epsilon,2)} + \int_0^t (t-\tau)^\delta\,\|\<\tau\>^{-(p-1)}|w(\tau,\cdot)|^p\|_{\Gamma,1,(1+\epsilon,2)}\,d\tau. \]
Since
\[ [\p_\tau,\<\tau\>^\alpha]=\alpha\frac1{\<\tau\>}\<\tau\>^\alpha, \qquad [L_0,\<\tau\>^\alpha] = \alpha \<\tau\>^\alpha, \qquad [L_j,\<\tau\>^\alpha]=\alpha\frac{x_j}{\<\tau\>}\,\<\tau\>^\alpha\,, \]
thanks to the finite speed of propagation, i.e. $|x|\lesssim \<t\>$ in~$\supp u$, we get
\[ \|u(t,\cdot)\|_{\Gamma,1,2} \lesssim \|u_0\|_{\Gamma,1,2} + t^\delta \|u_1\|_{\Gamma,1,(1+\epsilon,2)} + \int_0^t (t-\tau)^\delta\,\<\tau\>^{-(p-1)}\,\||w(\tau,\cdot)|^p\|_{\Gamma,1,(1+\epsilon,2)}\,d\tau. \]
Now we may estimate
\[ \||w(\tau,\cdot)|^p\|_{\Gamma,1,(1+\epsilon,2)} \lesssim \||w|^{p-1}\|_{(\bar{q},\infty)} \, \|w(\tau,\cdot)\|_{\Gamma,1,2}, \]
where~$\bar q(\epsilon)\in(2,\infty)$ is given by
\[ \frac1{1+\epsilon}=\frac12+\frac1{\bar q}. \]
Let~$\gamma(\epsilon)\doteq 2/\bar{q}=(1-\epsilon)/(1+\epsilon)$. Since~$p>2$ it holds $\gamma+1<p$. Then
\[ \||w(\tau,\cdot)|^{p-1}\|_{(\bar{q},\infty)} \lesssim \|w(\tau,\cdot)\|_\infty^{p-1-\gamma}\,\|w(\tau,\cdot)\|_{(2,\infty)}^\gamma. \]
Applying Sobolev embeddings on the unit sphere~$S^1$ leads to
\[ \|w(\tau,\cdot)\|_{(2,\infty)}\lesssim \|w(\tau,\cdot)\|_{H^1} \leq \|w(\tau, \cdot)\|_2+\|Dw(\tau,\cdot)\|_2.\]
Thanks to \eqref{eq:Kl2Z} we have
\[ \|w(\tau,\cdot)\|_\infty \lesssim \<\tau\>^{\frac12}\left(\|w(\tau, \cdot)\|_{\Gamma,1,2}+\|Dw(\tau,\cdot)\|_{\Gamma,1,2}\right), \]
therefore, taking into account that~$w\in X(T)$ we conclude
\begin{align*}
\int_0^t (t-\tau)^\delta\,\<\tau\>^{-(p-1)}\,\||w(\tau,\cdot)|^p\|_{\Gamma,1,(1+\epsilon,2)}\,d\tau
    & \lesssim \|w\|_{X(T)}^p\,\int_0^t (t-\tau)^\delta\,\<\tau\>^{-(p-1)+p\delta+\frac{p-(1+\gamma)}2}\,\,d\tau.
\end{align*}
Since~$\delta(\epsilon)\to0$ and~$\gamma(\epsilon)\to1$ as~$\epsilon\to0$, for any~$p>2$, one can find a sufficiently small~$\epsilon$ such that
\[ -(p-1)+p\delta+\frac{p-(1+\gamma)}2< -1\,. \]
To estimate~$\|Du\|_{\Gamma,1,2}$ we apply~\eqref{eq:energy}. Now
\[ \||w(\tau,\cdot)|^p\|_{\Gamma,1,2} \lesssim \||w(\tau,\cdot)|^{p-1}\|_{2+\epsilon_1}\,\|w(\tau,\cdot)\|_{\Gamma,1,q_1} \]
for some~$\epsilon_1>0$, where~$q_1(\epsilon_1)$ is such that
\[ \frac12 = \frac1{2+\epsilon_1}+\frac1{q_1}. \]
Sobolev embeddings yield
\[ \|w(\tau,\cdot)\|_{\Gamma,1,q_1} \lesssim \|w(\tau,\cdot)\|_{\Gamma,1,2}+\|Dw(\tau,\cdot)\|_{\Gamma,1,2}\,.\]
On the other hand, since $p>2$, we have
\[ \||w(\tau,\cdot)|^{p-1}\|_{2+\epsilon_1} \leq \|w(\tau,\cdot)\|_{(2+\epsilon_1)(p-1)}^{p-1} \lesssim \<\tau\>^{-\left(\frac12-\frac1{(2+\epsilon_1)(p-1)}\right)(p-1)}\|w(\tau,\cdot)\|_{\Gamma,1,2}^{p-1} \leq \|w(\tau,\cdot)\|_{\Gamma,1,2}^{p-1}. \]
In turn, this gives
\[ \int_0^t \<\tau\>^{-(p-1)}\,\||w(\tau,\cdot)|^p\|_{\Gamma,1,2}\,d\tau \lesssim \|w\|_{X(T)}^p\,\int_0^t \<\tau\>^{-(p-1)+p\delta}\,\,d\tau.\]
Since~$p>2$, it is sufficient to fix~$\epsilon$ such that~$\delta(\epsilon)$ satisfies $p(1-\delta)>2$.
\\
Summarizing, we proved that
\begin{align*}
\|u(t,\cdot)\|_{\Gamma,1,2}
    & \lesssim \|u_0\|_{\Gamma,1,2} + \<t\>^\delta \|u_1\|_{\Gamma,1,(1+\epsilon,2)} + \<t\>^\delta \|w\|_{X(T)}^p\,,\\
\|Du(t,\cdot)\|_{\Gamma,1,2}
    & \lesssim \|\nabla u_0\|_{\Gamma,1,2} + \|u_1\|_{\Gamma,1,2} + \|w\|_{X(T)}^p\,.
\end{align*}
Recalling that initial data are compactly supported, we derive
\[ \|u\|_{X(T)} \lesssim \left(\|u_0\|_{\Gamma,1,2}+\|Du_0\|_{\Gamma,1,2} + \|u_1\|_{\Gamma,1,(1+\epsilon,2)}+ \|u_1\|_{\Gamma,1,2} \right) + \|w\|_{X(T)}^p\,. \]
By a standard argument, this estimate guarantees that with small data the operator $S[w]$ has a unique fixed point, that is the required solution.

\section{Proof of Theorem~\ref{Thm:3}}
\begin{remark} \label{Rem1}
In the statement of Theorem~\ref{Thm:3} we may relax the assumptions of compact support of the initial data. More precisely, we will prove that for
any~$p>p_0(5)$ and for any~$\kappa\geq (3-p)/(p-1)$ if~$p<2$, or~$\kappa>1$ if~$p\geq2$, there exists~$\varepsilon_0>0$ such that
if~$(v_0,v_1)\in\mathcal{C}^2(\R^3)\times\mathcal{C}^1(\R^3)$ are radial, namely, $v_0=v_0(|x|)$, $v_1=v_1(|x|)$, and
\begin{equation}\label{eq:datau0u1}
\<r\>^{\kappa+1}\left(|v_0(r)+v_1(r)|+\<r\>|v^\prime_0(r)+v^\prime_1(r)| \right) +
\<r\>^\kappa\left(|v_0(r)|+\<r\>|v^\prime_0(r)|+\<r\>^2|v^{\prime\prime}_0(r)|\right)<\varepsilon,
\end{equation}
for some $\varepsilon<\varepsilon_0$, then~\eqref{eq:CP2} admits a (radial) global,
solution~$v\in\mathcal{C}([0,\infty)\times\R^3)\cap\mathcal{C}^2([0,\infty)\times(\R^3\setminus\{0\}))$. We set~$r=|x|$ in~\eqref{eq:datau0u1}.
\end{remark}
\begin{remark} \label{Rem2}
We may also replace the nonlinear term~$|u|^p$ in~\eqref{eq:CP2} by~$f(u)$, where~$f\in\mathcal{C}^1$ is an even function satisfying~$|f^{(h)}(u)|\lesssim
|u|^{p-h}$ for $h=0,1$. In particular, it holds
\begin{equation}\label{eq:flip}
f(0)=0, \qquad |f(u)-f(v)|\lesssim |u-v|(|u|^{p-1}+|v|^{p-1}).
\end{equation}
\end{remark}
To fulfill our objective we apply to~\eqref{eq:CPnew} a technique introduced by Asakura~\cite{A} and developed in different works, in particular
in~\cite{K5}. For the sake of simplicity, let~$v_0=0$ and let~$g\doteq u_1=v_1$. Then condition~\eqref{eq:datau0u1} becomes
\begin{equation}\label{eq:greg}
|g^{(h)}(r)|\leq \varepsilon \<r\>^{-(\kappa+1+h)}\,\,\,\mbox{for}\,\,\, h=0,1.
\end{equation}
We extend~$g$ to negative values of~$r$ by defining $g(r) \doteq g(-r)$ for any~$r<0$. Then, by symmetry, we rewrite~\eqref{eq:CPnew} as
\begin{equation}\label{eq:CPr}
\begin{cases}
u_{tt}-u_{rr}-\frac2r\,u_r = \<t\>^{-(p-1)}|u|^p, & t\geq0, \quad r\in\R,\\
u(0,r)=0, \quad u_t(0,r)=g(r),& r\in\R.
\end{cases}
\end{equation}
\begin{definition}\label{def:solnonl}
We say that $u(t,|x|)=u(t,r)$ is a radial global solution to~\eqref{eq:CPr} if $u\in\mathcal{C}([0,\infty)\times\R)$,
$r^2u\in\mathcal{C}^2([0,\infty)\times\R)$ and
\begin{equation*}
\begin{cases}
r^2u_{tt}-(r^2u_{rr}+2r\,u_r) =r^2\<t\>^{-(p-1)}|u|^p, & t\geq0, \quad r\in\R, \\
u(0,r)=0, \quad u_t(0,r)=g(r), & r\in\R.
\end{cases}
\end{equation*}
\end{definition}
\begin{remark} \label{Rem3}
Any solution to~\eqref{eq:CPr} in the sense of Definition~\ref{def:solnonl} gives
a~$\mathcal{C}([0,\infty)\times\R^3)\cap\mathcal{C}^2([0,\infty)\times(\R^3\setminus\{0\}))$ solution to~\eqref{eq:CPnew} and in turn of to~\eqref{eq:CP2}.
\end{remark}
\subsection{The linear equation}
\begin{definition}\label{def:solin}
Let us consider
\begin{equation}\label{eq:CPrlin}
\begin{cases}
u_{tt}-u_{rr}-\frac2r\,u_r = 0, & t\geq0, \quad r\in\R,\\
u(0,r)=0, \quad u_t(0,r)=g(r), & r\in\R.
\end{cases}
\end{equation}
We say that~$u\in\mathcal{C}([0,\infty)\times\R)$ is a solution to~\eqref{eq:CPrlin} if~$r^2u\in\mathcal{C}^2([0,\infty)\times\R)$ and
\begin{equation}\label{eq:CPr2lin}
\begin{cases}
r^2u_{tt}-(r^2u_{rr}+2r\,u_r) =0, & t\geq0, \quad r\in\R, \\
u(0,r)=0, \quad u_t(0,r)=g(r), & r\in\R.
\end{cases}
\end{equation}
\end{definition}

We see that $r^2u\in\mathcal{C}^2$ and~$u\in\mathcal{C}$ give sufficient regularity for solutions to the equation in \eqref{eq:CPr2lin}. Indeed, we have
$ru_r=\partial_r(r^2u)-2ru \in \mathcal C$ and $r^2 u_{rr}+2ru_r=\partial_{rr}(r^2u)-2u-2ru_r\in \mathcal C$. Hence, also $r^2u_{tt}\in \mathcal{C}$.
According to Definition~\ref{def:solin} the function
\begin{equation*}
u^\lin(t,r)= \int_{-1}^1 H_g(t+r\sigma)\,d\sigma\,\,\, \text{with } \,\,\,H_g(\rho)\doteq \frac{\rho g(\rho)}2
\end{equation*}
is the solution to~\eqref{eq:CPrlin}. This result can be found in \cite{CH}, but we rewrite the
computation for completeness.
Indeed, for any~$H=H(\rho)$, $H\in\mathcal{C}^1$, we put
\begin{equation}\label{eq:v}
v(t,r)\doteq \frac1r\int_{t-r}^{t+r}H(\rho)\,d\rho  =\int_{-1}^1 H(t+r\sigma)\,d\sigma\,.
\end{equation}
For any~$r\neq0$ it holds
\begin{align}
\label{eq:vt}
v_t
    & = \int_{-1}^1 H'(t+r\sigma)\,d\sigma = \frac1r\, (H(t+r)-H(t-r))\,,\\
\label{eq:vtt}
v_{tt}
    & = \frac1r\, (H'(t+r)-H'(t-r))\,,\\
\label{eq:vr}
v_r
    & = \int_{-1}^1 \sigma\,H'(t+r\sigma)\,d\sigma = \frac1r\, (H(t+r)+H(t-r)) - \frac1r\,v\,, \\
\label{eq:vrr} & \begin{array} {cr} v_{rr}
     = -\frac1{r^2}\, (H(t+r)+H(t-r))+\frac1r\, (H'(t+r)-H'(t-r)) + \frac1{r^2}\,v-\frac1r\,v_r \\
     = \frac1r\, (H'(t+r)-H'(t-r)) -\frac2r\,v_r.\end{array}
\end{align}
In particular, $v$ solves the equation in~\eqref{eq:CPr2lin} for any~$r\neq0$. Moreover, $r^2v\in\mathcal{C}^2([0,\infty) \times \R)$, and~$v$ solves the equation
in~\eqref{eq:CPr2lin} for any~$r\in\R$, as one may immediately check by multiplying \eqref{eq:vt}-\eqref{eq:vr} by~$r$ and \eqref{eq:vtt}-\eqref{eq:vrr}
by~$r^2$. We remark that $v(0,r)=0$ if~$H$ is odd. In this latter case, $rv_t(0,r)=2H(r)$.  In particular, this proves that~$u^\lin$
solves~\eqref{eq:CPrlin}.

For our convenience we also compute
\begin{align}
\label{eq:derv}
\partial_r(rv)
    & = v + rv_r = H(t+r)+H(t-r),\\
\label{eq:de2r2v}
\p_r^2(r^2v)
    & = \p_r(rv)+r\p_r^2(rv) = H(t+r)+H(t-r)+r(H'(t+r)-H'(t-r)).
\end{align}
For any fixed $\kappa>1$, we introduce the Banach space
\begin{equation*}
 X_\kappa \doteq \left\{ u\in C([0,\infty),\R), \ \, u \,\,\text{is even in~$r$} : \ \partial_r(ru)\in\mathcal{C}([0,\infty),\R), \; \|u\|_{X_\kappa}<\infty \right\}
\end{equation*}
with the norm
\begin{equation*}
\|u\|_{X_\kappa}\doteq \n{\<t+|r|\>\,\<t-|r|\>^{\kappa-1} u} + \n{\<r\>^{-1}\<t+|r|\>\,\<t-|r|\>^{\kappa-1} \partial_r(ru)}.
\end{equation*}
\begin{theorem}\label{thm:lin}
Suppose that~\eqref{eq:greg} holds for some $\kappa>1$. Then
\begin{equation*}
\|u^\lin\|_{X_\kappa}\leq C\varepsilon
\end{equation*}
for a suitable constant $C>0$.
\end{theorem}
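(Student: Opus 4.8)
The plan is to work directly from the explicit representation $u^\lin(t,r)=\int_{-1}^1 H_g(t+r\sigma)\,d\sigma=\frac1r\int_{t-r}^{t+r}H_g(\rho)\,d\rho$ together with the identity $\partial_r(ru^\lin)=H_g(t+r)+H_g(t-r)$ from~\eqref{eq:derv}, reducing everything to pointwise information on $H_g$. From~\eqref{eq:greg} with $h=0$ and $|\rho|\le\<\rho\>$ one gets $|H_g(\rho)|=\frac{|\rho|}2|g(\rho)|\lesssim\varepsilon\<\rho\>^{-\kappa}$; moreover $H_g$ is odd, since $g$ was extended evenly. Only the $h=0$ bound and this oddness are needed for the two terms of $\|\cdot\|_{X_\kappa}$. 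By evenness of $u^\lin$ (hence of $\partial_r(ru^\lin)$) in $r$, I may assume $r\ge0$ throughout, so that $\<t+|r|\>=\<t+r\>=1+t+r$ and $\<t-|r|\>=\<t-r\>=1+|t-r|$.

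For the second term of the norm I estimate $\partial_r(ru^\lin)=H_g(t+r)+H_g(t-r)$ directly: the pointwise bound yields $|\partial_r(ru^\lin)|\lesssim\varepsilon\,(\<t+r\>^{-\kappa}+\<t-r\>^{-\kappa})$. It then remains to verify the two elementary weight inequalities $\<r\>^{-1}\<t+r\>^{1-\kappa}\<t-r\>^{\kappa-1}\lesssim1$ and $\<r\>^{-1}\<t+r\>\<t-r\>^{-1}\lesssim1$. The first follows from $\<t+r\>\ge\<t-r\>\ge1$ (since $t+r\ge|t-r|$) and $1-\kappa<0$, which already reduce the weight to $\<r\>^{-1}\le1$; the second amounts to the claim $\<t+r\>\lesssim\<r\>\<t-r\>$, which I check by the substitution $s=|t-r|$, treating $t\ge r$ and $t<r$ separately, each being a one-line polynomial inequality.

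The first term of the norm is the delicate one, and is where the oddness of $H_g$ becomes essential. For $t\ge r$ the interval $[t-r,t+r]$ lies in $[0,\infty)$, so $\int_{t-r}^{t+r}\<\rho\>^{-\kappa}\,d\rho=\frac1{\kappa-1}\big(\<t-r\>^{-(\kappa-1)}-\<t+r\>^{-(\kappa-1)}\big)$. Writing $x=\<t-r\>/\<t+r\>\in(0,1]$ and using the convexity-type inequality $1-x^{\kappa-1}\le\max\{1,\kappa-1\}\,(1-x)$ together with $1-x=2r/\<t+r\>$, I extract exactly the factor $r/\<t+r\>$ needed to absorb the prefactor $1/r$, obtaining $\<t+r\>\<t-r\>^{\kappa-1}\,|u^\lin|\lesssim\varepsilon$. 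For $t<r$ the interval straddles the origin and the naive bound fails (indeed $u^\lin(0,r)=0$), so I first use that $H_g$ is odd to discard the symmetric part $\int_{t-r}^{r-t}H_g=0$, leaving $\int_{t-r}^{t+r}H_g=\int_{r-t}^{r+t}H_g$ over an interval contained in $[0,\infty)$ of length $2t$. The same computation (now with radius $t$ in place of $r$) gives $\<t+r\>\<t-r\>^{\kappa-1}|u^\lin|\lesssim\varepsilon\,t/r$, and $t/r<1$ closes the estimate.

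Combining the two terms yields $\|u^\lin\|_{X_\kappa}\le C\varepsilon$. The main obstacle is precisely the regime $t<r$ with small $r$, where $u^\lin$ decays only by cancellation: the crude bound $|H_g|\lesssim\varepsilon\<\rho\>^{-\kappa}$ integrated over $[t-r,t+r]$ loses the decay entirely, so both the exploitation of the oddness of $H_g$ and the sharp integral estimate via $1-x^{\kappa-1}\le\max\{1,\kappa-1\}\,(1-x)$ (which captures the factor $r$, respectively $t$, beating the singular prefactor $1/r$ near the light cone) are indispensable to recover the claimed weighted bound.
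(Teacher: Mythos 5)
Your proof is correct, and it takes a genuinely different route from the paper's -- one that, in fact, repairs a defect in it. Both arguments start from the same two facts: the representation $u^\lin=\frac1r\int_{t-r}^{t+r}H_g(\rho)\,d\rho$ with $|H_g(\rho)|\lesssim\varepsilon\<\rho\>^{-\kappa}$, and the identity \eqref{eq:derv} for $\partial_r(ru^\lin)$; your treatment of the term involving $\partial_r(ru^\lin)$ via two elementary weight inequalities is just a streamlined version of the paper's case distinction $t\geq2|r|$ versus $t\leq2|r|$. The genuine difference lies in the estimate of $u^\lin$ itself. The paper works in three zones ($t\geq2|r|$; $t\leq2|r|$, $|r|\leq1$; $t\leq2|r|$, $|r|\geq1$), using only the absolute-value bound on $H_g$ throughout; in the third zone its displayed step
\begin{equation*}
\int_{t-|r|}^{t+|r|}\<\rho\>^{-\kappa}\,d\rho \simeq \<t-|r|\>^{-(\kappa-1)}
\end{equation*}
is false when $t<|r|$ (take $t=0$ and $|r|$ large: the left-hand side tends to $2/(\kappa-1)$ while the right-hand side tends to $0$), and indeed no estimate ignoring signs can succeed there, since $\frac1{|r|}\int|H_g|\,d\rho$ is only of size $\varepsilon\<r\>^{-1}$ while the norm demands $\varepsilon\<r\>^{-\kappa}$ at, say, $t=|r|/2$. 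Your split at $t=r$, the exact integration of $\<\rho\>^{-\kappa}$ combined with $1-x^{\kappa-1}\leq\max\{1,\kappa-1\}(1-x)$, and above all the cancellation $\int_{t-r}^{r-t}H_g\,d\rho=0$ coming from the oddness of $H_g$ handle precisely this regime; this is the same device the paper itself deploys only later, in Remark~\ref{rem:tr}, for the Duhamel term $I_0'$, but never in its proof of Theorem~\ref{thm:lin}. So your argument is not merely an alternative: outside the light cone it supplies the cancellation step that the paper's own proof of this theorem is missing, while inside the light cone the two proofs are essentially equivalent.
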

\begin{proof}
We notice that
\begin{equation*}
|H_g^{(h)}(\rho)|\leq \varepsilon \<\rho\>^{-\kappa-h} \,\,\,\mbox{for}\,\,\, h=0,1.
\end{equation*}
Thanks to \eqref{eq:derv} we immediately derive
\begin{equation*}
|\partial_r(ru^\lin)| = |H_g(t+r)+H_g(t-r)| \lesssim \,\varepsilon\,\<t-|r|\>^{-\kappa}.
\end{equation*}
We distinguish two cases. \\
If~$t\geq 2|r|$, then~$\<t\pm|r|\>\simeq \<t\>$ and we get
\begin{equation*}
|\partial_r(ru^\lin)| \lesssim \,\varepsilon\,\<t-|r|\>^{-(\kappa-1)}\<t+|r|\>^{-1}\lesssim \,\varepsilon\,\<t-|r|\>^{-(\kappa-1)}\<t+|r|\>^{-1}\,\<r\>\,,
\end{equation*}
where in the last inequality we used the trivial estimate~$1\leq\<r\>$. \\
If~$t\leq 2|r|$, then~$\<t+|r|\>\leq 3\<r\>$, therefore,
\begin{equation*}
|\partial_r(ru^\lin)| \lesssim \,\varepsilon\,\<t-|r|\>^{-\kappa}\,\<t+|r|\>^{-1}\,\<r\> \lesssim
\,\varepsilon\,\<t-|r|\>^{-(\kappa-1)}\,\<t+|r|\>^{-1}\,\<r\>,
\end{equation*}
where in the last inequality we use the trivial estimate~$\<t-|r|\>^{-1}\leq1$.

In order to estimate $\n{\<t+|r|\>\<t-|r|\>^{\kappa-1}u^\lin}$ we observe that
\begin{equation*}
|u^\lin(t,r)|\lesssim \frac{\varepsilon}r \int_{t-r}^{t+r}\<\rho\>^{-\kappa}\,d\rho
= \frac1{|r|}\,C\,\varepsilon\, \int_{t-|r|}^{t+|r|}\<\rho\>^{-\kappa}\,d\rho.
\end{equation*}
If~$t\geq 2|r|$, then~$\<t\pm|r|\>\simeq \<t\>$, hence,
\begin{equation*}
|u^\lin(t,r)| \simeq \varepsilon \<t-|r|\>^{-(\kappa-1)}\,\<t+|r|\>^{-1}.
\end{equation*}
If~$t\leq 2|r|$, then we also distinguish two cases. If~$|r|\leq1$, then~$\<t+|r|\>\<t-|r|\>^{\kappa-1}\simeq 1$ and it is sufficient to estimate
\begin{equation*}
|u^\lin(t,r)| \leq \int_{-1}^1 H_g(t+r\sigma)\,d\sigma \leq C.
\end{equation*}
On the other hand, if~$t\leq2|r|$ and~$|r|\geq1$, then~$\<t+|r|\>\leq3\<r\>$ and~$|r|\simeq \<r\>$, therefore,
\begin{align*}
|u^\lin(t,r)|
    & \lesssim \frac1{|r|}\,\varepsilon\, \int_{t-|r|}^{t+|r|}\<\rho\>^{-\kappa}\,d\rho \simeq \frac1{\<r\>}\,\varepsilon\,\<t-|r|\>^{-(\kappa-1)}\\
    & \lesssim \varepsilon\,\<t+|r|\>^{-1}\,\<t-|r|\>^{-(\kappa-1)},
\end{align*}
thanks to~$\kappa>1$.
This concludes the proof that~$\|u^\lin\|_{X_\kappa}\leq C\varepsilon$.
\end{proof}

\subsection{Duhamel's principle and basic nonlinear estimates}
For any~$u\in X_\kappa$ let
\[
Lu(t,r) \doteq \int_0^t \<s\>^{-(p-1)}\,\int_{-1}^1 H_u[s](t-s+r\sigma)\,d\sigma\,ds=\frac{1}{r}\int_0^t \<s\>^{-(p-1)}\int_{t-s-r}^{t-s+r}H_u[s](\rho)\,d\rho \,ds,
\]
where
\begin{equation}\label{eq:Hf}
H_u[s](\rho) \doteq \frac{\rho\,f(u(s,\rho))}2\,.
\end{equation}
We denote by $H_u[s]'(\rho)$ the derivative of $H_u[s](\rho)$ with respect to $\rho$, considering $s$ as a parameter.

Let us consider $f(u(s,\rho))$ and $\rho\partial_\rho f(u(s,\rho))$. If $u\in X_\kappa$, recalling that $ru_r=\p_r(ru)-u$, then we may estimate
\begin{align*}
|f(u(s,\rho))|
    & \lesssim \|u\|^p_{X_\kappa}\,\<s+|\rho|\>^{-p}\,\<s-|\rho|\>^{-p(\kappa-1)},\\
\<\rho\>^{-1}\,|\rho\,\p_\rho f(u(s,\rho))|
    & \lesssim \|u\|^p_{X_\kappa}\,\<s+|\rho|\>^{-p}\,\<s-|\rho|\>^{-p(\kappa-1)},
\end{align*}
Having in mind~\eqref{eq:Hf}, it follows, in particular, that
\begin{equation}\label{eq:basicH}
|H_u[s](\rho)| + |H_u[s]'(\rho)| \lesssim \|u\|^p_{X_\kappa}\,\<s+|\rho|\>^{-p}\,\<s-|\rho|\>^{-p(\kappa-1)}\<\rho\>.
\end{equation}
\begin{prop}\label{prop:Duh}
Let $u\in X_\kappa$ be even with respect to $r$. Then $Lu\in X_\kappa$ and $r^2Lu \in\mathcal{C}^2([0,\infty)\times\R)$. Moreover, $Lu$ is even with
respect to $r$ and satisfies
\begin{equation}\label{eq:CPr2}
r^2\left(\p_t^2-\p_r^2\right)Lu-2r \partial_r Lu = \<t\>^{-(p-1)}\,r^2f(u),\qquad t\geq0, \quad r\in\R
\end{equation}
with zero initial data $g=0$.
\end{prop}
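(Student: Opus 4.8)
The plan is to exploit the fact that $Lu$ is the Duhamel superposition built from the very same fundamental solution used for $u^\lin$ in Theorem~\ref{thm:lin}. Write $Lu(t,r)=\int_0^t \<s\>^{-(p-1)}\,w_s(t,r)\,ds$, where $w_s(t,r)\doteq\int_{-1}^1 H_u[s](t-s+r\sigma)\,d\sigma$. The evenness of $Lu$ in $r$ is immediate: the substitution $\sigma\mapsto-\sigma$ turns $w_s(t,-r)$ into $w_s(t,r)$. Next I record that, since $u$ is even in $r$ and $f$ is even, the profile $H_u[s](\rho)=\rho f(u(s,\rho))/2$ from \eqref{eq:Hf} is odd in $\rho$; hence, exactly as in the computation leading from \eqref{eq:v} to the remarks following \eqref{eq:vrr}, each $w_s$ solves the homogeneous radial equation for $t>s$ and satisfies $w_s(s,r)=0$ together with $r\,\partial_t w_s(s,r)=2H_u[s](r)=rf(u(s,r))$, i.e. $\partial_t w_s(s,r)=f(u(s,r))$.

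For the regularity and the equation I differentiate under the integral sign. Because $w_s(s,r)=0$, the boundary term produced by the upper limit $s=t$ vanishes in $\partial_t Lu$, so $\partial_t Lu=\int_0^t\<s\>^{-(p-1)}\partial_t w_s\,ds$; differentiating once more, the boundary term is now $\<t\>^{-(p-1)}\partial_t w_s(s,r)\big|_{s=t}=\<t\>^{-(p-1)}f(u(t,r))$. Combining this with $\partial_t^2 w_s=\partial_r^2 w_s+\tfrac2r\partial_r w_s$ yields
\[
\partial_t^2 Lu-\partial_r^2 Lu-\tfrac2r\partial_r Lu=\<t\>^{-(p-1)}f(u),
\]
which is \eqref{eq:CPr2} after multiplication by $r^2$. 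To make this rigorous and to obtain $r^2Lu\in\mathcal{C}^2$ I mirror \eqref{eq:vt}--\eqref{eq:de2r2v}: I express $\partial_r(rLu)$, $\partial_r^2(r^2 Lu)$ and the $t$-derivatives as $s$-integrals of $H_u[s]$ and $H_u[s]'$ evaluated at $t-s\pm r$ (e.g. $\partial_r(rLu)(t,r)=\int_0^t\<s\>^{-(p-1)}\big(H_u[s](t-s+r)+H_u[s](t-s-r)\big)\,ds$), and I deduce continuity up to $r=0$ from the continuity of the integrands together with the domination furnished by \eqref{eq:basicH}, which is also what legitimates the differentiations under the integral sign. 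The zero initial data are immediate, since the $s$-integral runs over $[0,t]$, so that $Lu(0,r)=0$ and $\partial_t Lu(0,r)=0$.

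The main work, and the \emph{main obstacle}, is the norm bound $\|Lu\|_{X_\kappa}\lesssim\|u\|_{X_\kappa}^p$. Here I feed the pointwise estimate \eqref{eq:basicH},
\[
|H_u[s](\rho)|+|H_u[s]'(\rho)|\lesssim\|u\|_{X_\kappa}^p\,\<s+|\rho|\>^{-p}\,\<s-|\rho|\>^{-p(\kappa-1)}\<\rho\>,
\]
into the representation
\[
|Lu(t,r)|\le\frac1{|r|}\int_0^t\<s\>^{-(p-1)}\!\!\int_{t-s-|r|}^{t-s+|r|}|H_u[s](\rho)|\,d\rho\,ds
\]
and into the formula for $\partial_r(rLu)$ above, and I must recover the weights $\<t+|r|\>^{-1}\<t-|r|\>^{-(\kappa-1)}$ defining the $X_\kappa$ norm. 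As in the proof of Theorem~\ref{thm:lin} I split the $(t,r)$-plane according to $t\gtrless 2|r|$ and, inside each region, split the $\rho$- and $s$-integrations according to the sign of $t-s\pm\rho$ and the comparison of $s$ with $|r|$. The decay of the inner $\rho$-integral is governed by the weight $\<s-|\rho|\>^{-p(\kappa-1)}$, while the outer $s$-integral combines $\<s\>^{-(p-1)}$ with $\<s+|\rho|\>^{-p}$; the threshold $p>p_0(5)$, together with the admissible range $\kappa\ge(3-p)/(p-1)$ for $p<2$ (resp. $\kappa>1$ for $p\ge2$) from Remark~\ref{Rem1}, is precisely what guarantees that these integrals converge and reproduce the correct powers of $\<t\pm|r|\>$ rather than lose a power or pick up a logarithm. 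This region-by-region bookkeeping is routine but delicate, and it is the step where the exponent $p_0(5)$ must enter; once it is carried out, $\|Lu\|_{X_\kappa}<\infty$ follows and the proof is complete.
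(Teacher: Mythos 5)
Your treatment of everything this proposition actually asserts matches the paper's proof step for step: evenness of $Lu$ (the paper gets it from the oddness of $H_u[s]$, you from the substitution $\sigma\mapsto-\sigma$; both are correct, and the oddness is in any case what you both need for $w_s(s,r)=0$), the two differentiations in $t$ under the integral sign with the first boundary term vanishing and the second producing $\<t\>^{-(p-1)}f(u(t,r))$, the identification of the equation via the fact that each $w_s$ solves the homogeneous radial equation (the computations \eqref{eq:vt}--\eqref{eq:vrr}), the continuity of the $r$-derivatives of $r^2Lu$ read off from the explicit formulas \eqref{eq:derv}--\eqref{eq:de2r2v}, and the zero initial data. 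The one divergence is organizational, but it matters for how your last step should be judged. The norm estimate $\|Lu\|_{X_\kappa}\lesssim\|u\|_{X_\kappa}^p$, which you fold into this proof, call the \emph{main obstacle}, and then only sketch, is not part of the paper's proof of this proposition at all: the paper reads ``$Lu\in X_\kappa$'' here only at the level of continuity of $Lu$ and $\partial_r(rLu)$, and isolates the quantitative bound as the separate Theorem~\ref{thm:nonl}, proved afterwards via Lemma~\ref{lem:Ixi} and Propositions~\ref{thm:I0} and~\ref{thm:I1} (the change of variables $\xi=s+\rho$, $\eta=\rho-s$ reducing matters to the one-dimensional integral $I(\xi)$, then the zone splitting $t\gtrless 2r$, $r\lessgtr 1$). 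So your decision to defer that estimate is consistent with the paper's architecture, and the strategy you sketch is indeed the one the paper executes; but describing it as ``routine'' misjudges it: it is the analytic core of the entire section, the only place where $p>p_0(5)$ and the admissible range \eqref{eq:rangek} of $\kappa$ actually enter, and it occupies most of the remaining pages. If your write-up is meant to stand alone as a proof of the full claim $Lu\in X_\kappa$ (finite norm included), you must either carry that estimate out or cite Theorem~\ref{thm:nonl} explicitly rather than gesture at it.
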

\begin{proof}
From the continuity of $H_u[s](\rho)$ (which follows from~$u\in X_\kappa \subset \mathcal{C}$), it follows that~$Lu \in X_\kappa$, i.e. $Lu, \p_r(rLu)\in
\mathcal{C}$. Being~$u$ even with respect to~$r$, and $f$ even in $u$, we get that $H_u[s]$ is odd for any $s$. It follows that $Lu$ is even. We notice
that
\begin{align*}
\p_t Lu
    & = \int_0^t \<s\>^{-(p-1)}\,\int_{-1}^1 \p_t H_u[s](t-s+r\sigma)\,d\sigma\,ds + \<t\>^{-(p-1)}\,\int_{-1}^1 H_u[t](r\sigma)\,d\sigma \\
    & = \frac1r\,\int_0^t \<s\>^{-(p-1)}\,\bigl(H_u[s](t-s+r)-H_u[s](t-s-r)\bigr)\,ds, \\
\p_t^2 Lu
    & = \frac1r\,\int_0^t \<s\>^{-(p-1)}\,\bigl(H_u[s]'(t-s+r)-H_u[s]'(t-s-r)\bigr)\,ds + \frac1r \<t\>^{-(p-1)}\,\bigl(H_u[t](r)-H_u[t](-r)\bigr) \\
    & = \frac1r\,\int_0^t \<s\>^{-(p-1)}\,\bigl(H_u[s]'(t-s+r)-H_u[s]'(t-s-r)\bigr)\,ds + \<t\>^{-(p-1)}\,f(u(t,r)).
\end{align*}
In particular, we gain $\p_t^2 Lu\in \mathcal C$. Recalling \eqref{eq:vrr}, we see that~$Lu$ solves~\eqref{eq:CPr2} and we get the continuity of the
$r$-derivatives for $r^2Lu$.
\end{proof}

In order to prove global (in time) existence trough contraction mapping principle we shall prove the following statement.
\begin{theorem}\label{thm:nonl}
Let $p>p_0(5)$ and let
\begin{equation}\label{eq:rangek}
\frac{3-p}{p-1} \leq \kappa \leq 2(p-1) \quad \text{if~$p\in(p_0(5),2)$, or~$1<\kappa\leq2(p-1)$ if~$p\geq2$.}
\end{equation}
If $u\in X_\kappa$, then
\begin{align}
\label{eq:basicLu}
&\|Lu\|_{X_\kappa}\lesssim \|u\|_{X_\kappa}^p;\\
\label{eq:basicLuLv}
&\|Lu-Lv\|_{X_\kappa} \lesssim \|u-v\|_{X_\kappa}\left(\|u\|_{X_\kappa}^{p-1}+\|u\|_{X_\kappa}^{p-1}\right).
\end{align}
\end{theorem}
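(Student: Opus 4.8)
The plan is to prove the nonlinear estimate \eqref{eq:basicLu} by inserting the pointwise bound \eqref{eq:basicH} on $H_u[s]$ into the explicit representation of $Lu$, and then carrying out the resulting one-dimensional integrals. Concretely, I would start from
\[
Lu(t,r) = \frac1r\int_0^t \<s\>^{-(p-1)}\int_{t-s-r}^{t-s+r} H_u[s](\rho)\,d\rho\,ds,
\]
and from the companion representation of $\partial_r(rLu)$ obtained via \eqref{eq:derv}, namely
\[
\partial_r(rLu) = \int_0^t \<s\>^{-(p-1)}\bigl(H_u[s](t-s+r)+H_u[s](t-s-r)\bigr)\,ds.
\]
Substituting \eqref{eq:basicH} turns both quantities into weighted convolution-type integrals of the kernel $\<\sigma+|\cdot|\>^{-p}\<\sigma-|\cdot|\>^{-p(\kappa-1)}$ against the damping factor $\<s\>^{-(p-1)}$. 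The goal is then to reproduce the target weights $\<t+|r|\>^{-1}\<t-|r|\>^{-(\kappa-1)}$ (for $Lu$, with the extra $\<r\>$ for the derivative term) up to a factor $\|u\|_{X_\kappa}^p$.

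The main structural step is a careful case analysis in the light-cone variables. Following the Asakura--Kubo scheme in \cite{A,K5}, I would split the region of integration $\{0\le s\le t,\ |\rho|\le t-s+|r|\}$ according to whether a point lies in the \emph{interior} of the cone ($s\gtrsim |\rho|$, where $\<s+|\rho|\>\simeq\<s-|\rho|\>\simeq\<s\>$) or near the light cone ($s\lesssim|\rho|$), and also according to the sign and size of $t-|r|$ relative to $s-|\rho|$. In each subregion the $\rho$-integral is elementary; the surviving $s$-integral is of the form $\int \<s\>^{-(p-1)}\<s\pm\cdots\>^{-\gamma}\,ds$, whose convergence and decay rate are dictated precisely by the Strauss-type algebra. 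The condition $p>p_0(5)$, i.e. $4p^2-6p-2>0$, is exactly what guarantees that the critical exponent appearing after integration is integrable and produces the decay $\<t-|r|\>^{-(\kappa-1)}$; the lower bound $\kappa\ge(3-p)/(p-1)$ (for $p<2$) is what is needed so that the weight $\<s-|\rho|\>^{-p(\kappa-1)}$ decays fast enough near the cone, while the upper bound $\kappa\le 2(p-1)$ keeps the interior integral summable. The extra damping $\<s\>^{-(p-1)}$ from the time-dependent coefficient is precisely the mechanism that shifts the admissible range from $p_0(3)$ to $p_0(5)$, mimicking two extra space dimensions.

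Once \eqref{eq:basicLu} is established, the Lipschitz estimate \eqref{eq:basicLuLv} follows by the same computation applied to the difference. Writing $H_u[s]-H_v[s] = \tfrac{\rho}{2}\bigl(f(u)-f(v)\bigr)$ and invoking the structural bound \eqref{eq:flip}, one has the pointwise estimate
\[
\bigl|f(u(s,\rho))-f(v(s,\rho))\bigr| \lesssim |u-v|\bigl(|u|^{p-1}+|v|^{p-1}\bigr),
\]
so that $H_u[s]-H_v[s]$ and its $\rho$-derivative satisfy the analogue of \eqref{eq:basicH} with $\|u\|_{X_\kappa}^p$ replaced by $\|u-v\|_{X_\kappa}\bigl(\|u\|_{X_\kappa}^{p-1}+\|v\|_{X_\kappa}^{p-1}\bigr)$; the derivative bound additionally uses $ru_r=\partial_r(ru)-u$ to control $\partial_\rho\bigl(f(u)-f(v)\bigr)$. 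Feeding these into the integral estimates already performed yields \eqref{eq:basicLuLv} verbatim.

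The hard part will be the region near the light cone, where neither the $\<s+|\rho|\>$ nor the $\<t-|r|\>$ weight is comparable to a single scale, and where the worst decay rate is attained. There the Strauss exponent enters sharply: one must track the competition between the gain $p$ from $\<s+|\rho|\>^{-p}$, the loss coming from integrating the slowly-decaying tail, and the contribution of $\<s\>^{-(p-1)}$, and verify that the net exponent is strictly below the integrability threshold exactly when $p>p_0(5)$. Managing the boundary cases $\kappa=(3-p)/(p-1)$ and $\kappa=2(p-1)$, where logarithmic losses threaten to appear, will require the strict inequality $p>p_0(5)$ to absorb them, and this is where I expect the computation to be most delicate.
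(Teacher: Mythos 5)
Your plan follows the same route as the paper: reduce \eqref{eq:basicLu} to the pointwise bounds \eqref{eq:stimaL}--\eqref{eq:stimaL1}, insert \eqref{eq:basicH}, pass to light-cone variables, and let the Strauss-type algebra (the paper's Lemma~\ref{lem:Ixi}) generate exactly the constraints in \eqref{eq:rangek}; the Lipschitz estimate \eqref{eq:basicLuLv} is handled in both cases by rerunning the same integrals with \eqref{eq:flip}. However, there is a genuine gap in the step where you ``substitute \eqref{eq:basicH}'' into the representation of $Lu$ over the full interval $[t-s-r,\,t-s+r]$ and then do a case analysis. In the exterior zone $t\le r$ this cannot work, because after taking absolute values the resulting quantity
\begin{equation*}
I_0(t,r)=\int_0^t \<s\>^{-(p-1)}\int_{t-s-r}^{t-s+r}\<s+|\rho|\>^{-p}\,\<s-|\rho|\>^{-p(\kappa-1)}\,\<\rho\>\,d\rho\,ds
\end{equation*}
is bounded \emph{below} by a positive constant: for $t\geq1$ and $r\geq t+1$ the interval of integration contains $[-1,1]$, so
\begin{equation*}
I_0(t,r)\ \geq\ \int_0^t \<s\>^{-(p-1)}\int_{-1}^{1}\<s+|\rho|\>^{-p}\,\<s-|\rho|\>^{-p(\kappa-1)}\,\<\rho\>\,d\rho\,ds\ \gtrsim\ \int_0^1\<s\>^{-(p-1)-p\kappa}\,ds\ \gtrsim\ 1,
\end{equation*}
uniformly in $r$. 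Hence this route only yields $|Lu|\lesssim r^{-1}\|u\|_{X_\kappa}^p$, whereas \eqref{eq:stimaL} requires $\<t+r\>^{-1}\<t-r\>^{-(\kappa-1)}\simeq r^{-\kappa}$ when $r\gg t$, with $\kappa>1$. No decomposition into subregions can repair this, since the quantity you are estimating is genuinely too large.

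The missing idea is the cancellation exploited in the paper's Remark~\ref{rem:tr}: since $u$ is even in $r$ and $f$ is even, the function $H_u[s]$ is \emph{odd} in $\rho$, so for $t-s\le r$ the contribution over the symmetric interval $[(t-s)-r,\,r-(t-s)]$ vanishes identically, and $Lu$ may be rewritten with integration only over $[r-(t-s),\,r+(t-s)]$. Only \emph{after} this reflection step may one take absolute values; the resulting quantity $I_0'$ has its $\xi$-integral starting at $r-t$ rather than at $(t-r)_+=0$, which is what produces the factor $\<t-r\>^{-(\kappa-1)}$ outside the cone (paper's Proposition~\ref{thm:I0}). Note that no such cancellation is needed for $\partial_r(rLu)$, since there $H_u[s]$ is evaluated pointwise and the bound \eqref{eq:basicH} is even in $\rho$; your treatment of those terms, and of the difference $Lu-Lv$, is in line with the paper's Propositions~\ref{thm:I0} and~\ref{thm:I1}. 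So: same scheme as the paper, but as written your estimate fails in the region $t\le r$, and you must insert the oddness/reflection argument before estimating by absolute values.
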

Recalling the definition of the involved norm, for proving \eqref{eq:basicLu} it suffices to show
\begin{align}
&|Lu(t,r)|\lesssim \<t+|r|\>^{-1}\<t-|r|\>^{-(\kappa-1)}\|u\|_{X_\kappa}^{p}\,,
\label{eq:stimaL}\\
&|\partial_r(rLu)(t,r)|\lesssim \<t+|r|\>^{-1}\<t-|r|\>^{-(\kappa-1)}\<r\>\|u\|_{X_\kappa}^{p}\,.
\label{eq:stimaL1}
\end{align}
Since $Lu$ is even in $r$, it suffices to deal with $r>0$. Proceeding as in \eqref{eq:v}, from \eqref{eq:basicH} we have
\begin{align*}
|Lu(t,r)| &\lesssim  \frac{1}{r}\int_0^t \<s\>^{-(p-1)}\,\int_{t-s-r}^{t-s+r}| H_u[s](\rho)|\,d\rho\,ds\\
& \lesssim  \frac{1}{r}\|u\|^p_{X_\kappa}
\int_0^t \<s\>^{-(p-1)}\,\int_{t-s-r}^{t-s+r} \<s+|\rho|\>^{-p}\, \<s-|\rho|\>^{-p(\kappa-1)} \<\rho\> \,d\rho\,ds.
\end{align*}
By using \eqref{eq:derv} we get
\begin{align*}
|\p_r(rLu)(t,r)|& \le  \int_0^t \<s\>^{-(p-1)}\,\bigl|H_u[s](t-s+r)+H_u[s](t-s-r)|\,ds \\
& \lesssim \|u\|^p_{X_\kappa} \sum_{\pm}\int_0^t \<s\>^{-(p-1)}\,\<s+|t-s\pm r|\>^{-p}\<s-|t-s\pm r|\>^{-p(\kappa-1)}\<t-s\pm r\>ds.
\end{align*}
Consequently, our aim reduces to estimate the quantities
\begin{align*}
 I_0(t,r) &\doteq \int_0^t \<s\>^{-(p-1)}\,\int_{t-s-r}^{t-s+r} \<s+|\rho|\>^{-p}\, \<s-|\rho|\>^{-p(\kappa-1)} \<\rho\> \,d\rho\,ds,\\
 I_{1,\pm}(t,r) &\doteq \int_0^t \<s\>^{-(p-1)}\,\<s+|t-s\pm r|\>^{-p}\<s-|t-s\pm r|\>^{-p(\kappa-1)}\<t-s\pm r\>ds.
\end{align*}

Similarly, to prove \eqref{eq:basicLuLv} it suffices to show
\begin{align}
&|Lu(t,r)-Lv(t,r)|\lesssim \<t+|r|\>^{-1}\<t-|r|\>^{-(\kappa-1)}\|u-v\|_{X_\kappa}\left(\|u\|_{X_\kappa}^{p-1}+\|v\|_{X_\kappa}^{p-1}\right)\,,
\label{eq:stimaLuLv}\\
&|\partial_r(rLu)(t,r)- \partial_r(rLv)(t,r)|\lesssim \<t+|r|\>^{-1}\<t-|r|\>^{-(\kappa-1)}\<r\>\|u-v\|_{X_\kappa}\left(\|u\|_{X_\kappa}^{p-1}+\|v\|_{X_\kappa}^{p-1}\right)\,.
\label{eq:stima1LuLv}
\end{align}
We have
\begin{equation*}
|Lu(t,r)-Lv(t,r)| \lesssim  \frac{1}{r}\int_0^t \<s\>^{-(p-1)}\,\int_{t-s-r}^{t-s+r}| H_u[s](\rho)-H_v[s](\rho)|\,d\rho\,ds.
\end{equation*}
Moreover, from \eqref{eq:flip} and \eqref{eq:Hf}, it follows that
\begin{equation*}
| H_u[s](\rho)-H_v[s](\rho)|\lesssim |\rho||u(s,\rho)-v(s,\rho)|\left(|u(s,\rho)|^{p-1}+|v(s,\rho)|^{p-1}\right)\,.
\end{equation*}
As a conclusion
\begin{equation*}
|Lu(t,r)-Lv(t,r)| \lesssim \|u-v\|_{X_\kappa}\left(\|u\|_{X_\kappa}^{p-1}+\|v\|_{X_\kappa}^{p-1}\right) I_0(t,r).
\end{equation*}
Similarly, we get
\begin{equation*}
|\partial_r(rLu(t,r)-rLv(t,r))| \lesssim \|u-v\|_{X_\kappa}\left(\|u\|_{X_\kappa}^{p-1}+\|v\|_{X_\kappa}^{p-1}\right) \sum_{\pm} I_{1,\pm}(t,r).
\end{equation*}
If~$t\leq r$, then we may simplify our approach, thanks to the following.
\begin{remark}\label{rem:tr}
If~$t\leq r$, it holds
\[ Lu(t,r) =\frac{1}{r}\int_0^t \<s\>^{-(p-1)}\int_{r-(t-s)}^{r+(t-s)}H_u[s](\rho)\,d\rho \,ds. \]
Indeed,
\[ \int_{(t-s)-r}^{r-(t-s)} H_u[s](\rho)\,d\rho =0, \]
being~$H_u[s]$, defined in~\eqref{eq:Hf} odd, thanks to the assumption that~$f(u)$ is even with respect to~$u$, and thanks to the fact that~$u$ is even with respect to~$r$. Therefore, we may replace~$I_0(t,r)$ by
\[ I_0'(t,r) \doteq \int_0^t \<s\>^{-(p-1)}\,\int_{r-(t-s)}^{r+(t-s)} \<s+|\rho|\>^{-p}\, \<s-|\rho|\>^{-p(\kappa-1)} \<\rho\> \,d\rho\,ds. \]
\end{remark}
The estimates for $I_0$, $I_{1,\pm}$ and~$I_0'$ are based on the following lemma.
\begin{lemma}\label{lem:Ixi}
Let $p>p_0(5)$ and let
\begin{gather}
\label{eq:rangek1}
\frac{3-p}{p-1} \le \kappa \leq 2(p-1) \qquad \text{if~$p\in(p_0(5),2)$,}\\
\label{eq:rangek2}
1<\kappa\leq 2 \qquad \text{if~$p=2$,}\\
\label{eq:rangek3}
\frac1{p-1} \le \kappa \leq 2(p-1) \qquad \text{if~$p>2$.}
\end{gather}
Then
\begin{equation}\label{eq:alpha+1}
I(\xi)=\int_{-\xi}^{\xi} \<\eta+\xi\>\<\eta-\xi\>^{-(p-1)}\<\eta\>^{-p(\kappa-1)}\,d\eta\lesssim \<\xi\>^{-(\kappa-p)}.
\end{equation}
\end{lemma}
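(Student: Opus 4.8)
The plan is to prove \eqref{eq:alpha+1} by direct estimation, after reducing to the regime of large $\xi$. We may assume $\xi>0$, and since every bracket satisfies $\<\cdot\>\geq1$ the integrand is continuous (there are no singularities even at $\eta=\pm\xi$); hence for bounded $\xi$, say $\xi\le1$, we trivially have $I(\xi)\lesssim1\simeq\<\xi\>^{-(\kappa-p)}$ and there is nothing to prove. So I assume $\xi$ large and split $[-\xi,\xi]=[-\xi,0]\cup[0,\tfrac\xi2]\cup[\tfrac\xi2,\xi]$. The guiding principle is that on each piece one bracket is comparable to $\<\xi\>$ and may be frozen, leaving an elementary one-dimensional integral. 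Explicitly, for $\eta\in[0,\xi]$ one has $\<\eta+\xi\>\simeq\<\xi\>$; for $\eta\in[0,\tfrac\xi2]$ in addition $\<\eta-\xi\>\simeq\<\xi\>$; for $\eta\in[\tfrac\xi2,\xi]$ instead $\<\eta\>\simeq\<\xi\>$; and for $\eta\in[-\xi,0]$ both $\<\eta+\xi\>\le\<\xi\>$ and $\<\eta-\xi\>\geq\<\xi\>$. Throughout I write $b\doteq p(\kappa-1)$.

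On the two non-characteristic pieces $[-\xi,0]$ and $[0,\tfrac\xi2]$ I would freeze $\<\eta+\xi\>\lesssim\<\xi\>$ and $\<\eta-\xi\>^{-(p-1)}\lesssim\<\xi\>^{-(p-1)}$ (using $p-1>0$), which bounds the corresponding contributions by $\<\xi\>^{2-p}\int_0^{\xi}\<\eta\>^{-b}\,d\eta$. This last integral is $O(1)$ if $b>1$ and $O(\<\xi\>^{1-b})$ if $b<1$, yielding the bounds $\<\xi\>^{2-p}$ and $\<\xi\>^{3-p\kappa}$, respectively. Comparing with $\<\xi\>^{p-\kappa}$, the first is admissible exactly when $\kappa\le2(p-1)$ and the second exactly when $\kappa\ge(3-p)/(p-1)$; these are precisely the outer constraints in \eqref{eq:rangek1}--\eqref{eq:rangek3}, and for $p\geq2$ the lower one is automatic since $(3-p)/(p-1)\le1/(p-1)\le\kappa$. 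In the borderline case $b=1$ (i.e. $\kappa=1+1/p$, which lies in the admissible range) one picks up $\<\xi\>^{2-p}\log\<\xi\>$, and absorbing the logarithm into $\<\xi\>^{p-\kappa}$ requires $2-p<p-1-1/p$, that is $2p^2-3p-1>0$; this is exactly the condition $p>p_0(5)$.

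The decisive piece is the characteristic region $[\tfrac\xi2,\xi]$, where I freeze $\<\eta\>^{-b}\simeq\<\xi\>^{-b}$ and am left with $\<\xi\>^{1-b}\int_{\xi/2}^{\xi}\<\eta-\xi\>^{-(p-1)}\,d\eta=\<\xi\>^{1-b}\int_0^{\xi/2}\<w\>^{-(p-1)}\,dw$ after the substitution $w=\xi-\eta$. The size of this integral is dictated by whether $p-1$ exceeds $1$: it is $O(1)$ if $p>2$, $O(\log\<\xi\>)$ if $p=2$, and $O(\<\xi\>^{2-p})$ if $p<2$, giving the bounds $\<\xi\>^{1+p-p\kappa}$, $\<\xi\>^{3-2\kappa}\log\<\xi\>$, and $\<\xi\>^{3-p\kappa}$. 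Against $\<\xi\>^{p-\kappa}$ the first is admissible iff $\kappa\ge1/(p-1)$, the third iff $\kappa\ge(3-p)/(p-1)$, while the second (for $p=2$, where $p-\kappa=2-\kappa$) needs the \emph{strict} inequality $\kappa>1$ in order to absorb the logarithm. This is exactly why the hypotheses take the separate forms \eqref{eq:rangek1}--\eqref{eq:rangek3}. The role of $p>p_0(5)$ is twofold: its defining relation $2p^2-3p-1=0$ is equivalent to $(3-p)/(p-1)=2(p-1)$, so $p>p_0(5)$ guarantees the admissible $\kappa$-interval is non-empty and that the borderline comparisons just mentioned are strict. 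I expect this characteristic endpoint analysis, together with the careful tracking of the two borderline (logarithmic) cases $b=1$ and $p=2$, to be the main obstacle; the remainder is bookkeeping of powers.
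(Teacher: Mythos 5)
Your proposal is correct and takes essentially the same route as the paper: both decompose the integral so as to isolate the characteristic region near $\eta=\xi$ and the middle region, freeze the comparable brackets, and run the identical case analysis on $p(\kappa-1)$ versus $1$ and on $p$ versus $2$, with $p>p_0(5)$ entering exactly through the borderline logarithmic case $\kappa=1+1/p$ and the nonemptiness of the $\kappa$-interval. The only (harmless) difference is bookkeeping: the paper splits at $\pm\xi/2$ and estimates the left characteristic piece $\eta\in[-\xi,-\xi/2]$ separately, keeping the small bracket $\<\eta+\xi\>$ inside the integral, whereas you absorb $[-\xi,0]$ into the non-characteristic analysis via the cruder bound $\<\eta+\xi\>\le\<\xi\>$, which still yields admissible exponents under the stated hypotheses.
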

\begin{remark}
If~$p<2$, then the interval~\eqref{eq:rangek1}, i.e.~$(3-p)(p-1)^{-1}\leq \kappa\leq 2(p-1)$, is nonempty if, and only if, $p>p_0(5)$. If~$p>2$, then the
interval~\eqref{eq:rangek3}, i.e.~$(p-1)^{-1}\leq \kappa\leq 2(p-1)$ is nonempty for any~$p>2$. \\ We observe that this latter range contains the
range~$(1,2(p-1)]$ required in the assumption \eqref{eq:rangek}.
\end{remark}
\begin{proof}
We split $I(\xi)$ in $I_1(\xi)=\int_{-\xi/2}^{\xi/2} \dots \,d\eta$ and $I_2(\xi)$ as the remainder.

Let $\eta \in [0,\xi/2]$. Then we have $\<\xi \>\simeq \<\eta+\xi\>\simeq \<\xi-\eta\>$. Hence,
\begin{equation*}
I_{1}(\xi)\lesssim \<\xi\>^{2-p}\int_{0}^{\xi/2}\<\eta\>^{-p(\kappa-1)}\,d\eta.
\end{equation*}
We get $I_1(\xi)\lesssim \<\xi\>^{-(\kappa-p)}$ if
\begin{align*}
\kappa<1+\frac{1}{p} & \text{ and } -3+p\kappa\ge \kappa-p,\\
\kappa=1+\frac{1}{p} & \text{ and } -2+p>\kappa-p,\\
\kappa>1+\frac{1}{p} & \text{ and } -2+p\geq\kappa-p.
\end{align*}
The first condition corresponds to the interval
\[ \left[\frac{3-p}{p-1},1+\frac1p\right), \]
which is nonempty for any~$p>p_0(5)$. The second condition holds for any~$p>p_0(5)$, therefore~$\kappa=1+1/p$ is admissible. The third condition
corresponds to the interval
\[ \left(1+\frac1p, 2(p-1)\right], \]
which is nonempty for any~$p>p_0(5)$. Gluing together the above three intervals we obtain the admissible range in~\eqref{eq:rangek1}, i.e.
\[ \left[\frac{3-p}{p-1},2(p-1)\right]. \]
Now, let $\eta\in[\xi/2,\xi]$. We have $\<\eta\>\simeq \<\xi\>\simeq\<\xi+\eta\>$. It follows that
\begin{equation*}
I_2(\xi)\simeq\<\xi\>^{1-p(\kappa-1)}\int_{\xi/2}^{\xi} \<\eta-\xi\>^{-(p-1)}\,d\eta+
\<\xi\>^{-(p-1)-p(\kappa-1)}\int_{\xi/2}^{\xi}  \<\eta-\xi\>\,d\eta=I_{2,1}(\xi)+I_{2,2}(\xi).
\end{equation*}
For any $p>1$ we have
\begin{equation*}
I_{2,2}(\xi)\lesssim \<\xi\>^{-(p-1)-p(\kappa-1)+2},
\end{equation*}
in particular, $I_{2,2}\le  \<\xi\>^{-(\kappa-p)}$ for any
\begin{equation}\label{eq:I22k}
\kappa\ge \frac{3-p}{p-1}.
\end{equation}

The estimate of $I_{2,1}$ depends on the range of $p$:
\begin{align*}
I_{2,1}(\xi)\lesssim \<\xi\>^{1-p(\kappa-1)} & \text{ if } p>2,\\
I_{2,1}(\xi)\lesssim \<\xi\>^{1-p(\kappa-1)}\ln\<\xi\> & \text{ if } p=2,\\
I_{2,1}(\xi)\lesssim \<\xi\>^{1-p(\kappa-1)-(p-1)+1}  & \text{ if } p<2.
\end{align*}
For $p<2$ the assumption $\kappa \ge \frac{3-p}{p-1}$ gives directly $I_{2,1}(\xi)\le \<\xi\>^{-(\kappa-p)}$. For~$p=2$, we get~$1-p(\kappa-1)<p-\kappa$ if
and only if $\kappa>1$. For $p>2$, we get~$1-p(\kappa-1)\leq p-\kappa$ if and only if $\kappa\ge \frac{1}{p-1}$.

Therefore, combining the lower bound on~$\kappa$ obtained for~$I_{2,1}$ with the upper bound for~$\kappa$ derived for~$I_1$, we obtain~\eqref{eq:rangek2}
if~$p=2$ and~\eqref{eq:rangek3} if~$p>2$.
\end{proof}

\begin{prop}\label{thm:I0}
Let $p>p_0(5)$ and $\kappa$ be as in~\eqref{eq:rangek}. It holds
\begin{equation*}
I_0(t,r) \lesssim
\begin{cases}
r\,\<t+r\>^{-\kappa} & \text{if $t\geq 2r$\quad \text{ or } $r\le 1$}, \\
\<t-r\>^{-(\kappa-1)}& \text{if $r\leq t\leq 2r$ \text{ and } $r\ge 1$}.
\end{cases}
\end{equation*}
Moreover,
\[ I_0'(t,r) \lesssim \<t-r\>^{-(\kappa-1)}\qquad \text{if $t\leq r$ \text{ and } $r\ge 1$.} \]
In particular, the estimates \eqref{eq:stimaL} and \eqref{eq:stimaLuLv} hold.
\end{prop}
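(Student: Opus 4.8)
The plan is to reduce both $I_0$ and $I_0'$ to the one–variable model integral of Lemma~\ref{lem:Ixi} by passing to characteristic coordinates. Since the integrand of $I_0$ is even in $\rho$, I first restrict to $\rho\ge0$, folding the part of the $\rho$–interval that lies below $0$ (which occurs exactly when $t-s<r$). For $\rho>0$ I set
\[
\xi\doteq\frac{s+\rho}2,\qquad \eta\doteq\frac{\rho-s}2,
\]
so that $s=\xi-\eta$, $\rho=\xi+\eta$, $d\rho\,ds=2\,d\xi\,d\eta$, and $s+\rho=2\xi$, $s-\rho=-2\eta$. Using $\<2\xi\>\simeq\<\xi\>$ and $\<2\eta\>\simeq\<\eta\>$, the integrand $\<s\>^{-(p-1)}\<s+\rho\>^{-p}\<s-\rho\>^{-p(\kappa-1)}\<\rho\>$ becomes comparable to
\[
\<\xi\>^{-p}\,\<\eta+\xi\>\,\<\eta-\xi\>^{-(p-1)}\,\<\eta\>^{-p(\kappa-1)}.
\]
Thus $\<\xi\>^{-p}$ factors out, and for each fixed $\xi$ the remaining $\eta$–integral is precisely the integrand of $I(\xi)$ in~\eqref{eq:alpha+1}.

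Next I identify the image of the domain. Rewriting $s\ge0$, $s\le t$, $\rho\ge0$ and $t-s-r\le\rho\le t-s+r$ in terms of $\xi,\eta$ gives $\eta\le\xi$, $\eta\ge\xi-t$, $\eta\ge-\xi$ and, crucially, $\frac{t-r}2\le\xi\le\frac{t+r}2$ (the $\eta$ cancels in the last two constraints). In particular the admissible $\eta$ always lie in $[-\xi,\xi]$, so Lemma~\ref{lem:Ixi} applies and the inner integral is $\lesssim\<\xi\>^{-(\kappa-p)}$. Hence, for the $\rho>0$ part,
\[
I_0(t,r)\lesssim \int_{\frac{t-r}2}^{\frac{t+r}2}\<\xi\>^{-p}\,\<\xi\>^{-(\kappa-p)}\,d\xi=\int_{\frac{t-r}2}^{\frac{t+r}2}\<\xi\>^{-\kappa}\,d\xi.
\]
For $I_0'$ the same computation applies, now with $\rho\ge0$ automatic (because $t\le r$ forces $r-(t-s)\ge0$), and it leads to the range $\frac{r-t}2\le\xi\le\frac{r+t}2$ and the analogous bound with those limits.

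It then remains to read off the decay in each regime from the elementary integral $\int\<\xi\>^{-\kappa}\,d\xi$, using $\kappa>1$. If $t\ge2r$ (or $r\le1$) the interval $[\frac{t-r}2,\frac{t+r}2]$ has length $\simeq r$ and $\<\xi\>\simeq\<t+r\>$ on it, giving $I_0\lesssim r\,\<t+r\>^{-\kappa}$. If $r\le t\le 2r$ and $r\ge1$, the integral over $[\frac{t-r}2,\infty)$ converges and is dominated by its lower endpoint, so $I_0\lesssim\<\tfrac{t-r}2\>^{-(\kappa-1)}\simeq\<t-r\>^{-(\kappa-1)}$; the same estimate applied to the $\xi$–range of $I_0'$, whose lower endpoint $\frac{r-t}2\simeq\<t-r\>$, yields $I_0'\lesssim\<t-r\>^{-(\kappa-1)}$. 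Finally, recalling $|Lu|\lesssim r^{-1}\|u\|_{X_\kappa}^p\,I_0$, dividing by $r$ and comparing the three regimes (using $\<t-r\>\simeq\<t+r\>$ when $t\ge2r$ or $r\le1$, and $r\simeq\<t+r\>$ when $t\simeq r\ge1$) turns these into $|Lu|\lesssim\<t+r\>^{-1}\<t-r\>^{-(\kappa-1)}\|u\|_{X_\kappa}^p$, and likewise for $Lu-Lv$; these are exactly~\eqref{eq:stimaL} and~\eqref{eq:stimaLuLv}.

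The step I expect to be most delicate is the bookkeeping of the regions rather than any single estimate. One must fold the part of the $\rho$–interval with $\rho<0$ (present whenever $t-s<r$), verify that the folded contribution again produces only $\xi\in[\frac{t-r}2,\frac{t+r}2]$ with $\eta\in[-\xi,\xi]$ so that Lemma~\ref{lem:Ixi} still governs the inner integral, and check that its size is no larger than that of the main term. One must also match the endpoint behaviour of $\int\<\xi\>^{-\kappa}\,d\xi$ to the precise powers $\<t+r\>^{-\kappa}$ versus $\<t-r\>^{-(\kappa-1)}$ claimed in the three cases, which is where the transition at $t=2r$ (interior of the cone, $\<\xi\>$ roughly constant) against $r\le t\le2r$ (near the cone, lower endpoint dominant) enters.
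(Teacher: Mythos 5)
Your proof is correct and is essentially the paper's own argument: fold to $\rho\ge0$ by evenness, pass to characteristic variables (the paper takes $\xi=s+\rho$, $\eta=\rho-s$, i.e.\ your choice up to a harmless factor $2$), bound the inner integral by Lemma~\ref{lem:Ixi}, reduce to $\int\<\xi\>^{-\kappa}\,d\xi$ over $[(t-r)_+,\,t+r]$ (resp.\ $[r-t,\,r+t]$ for $I_0'$), and split into the same zones to read off the decay and conclude \eqref{eq:stimaL} and \eqref{eq:stimaLuLv} exactly as the paper does. The only slip is notational: for $I_0'$ the relevant equivalence is $\<\frac{r-t}{2}\>\simeq\<t-r\>$ (the bracket of the lower endpoint, not the endpoint itself, which may vanish when $t$ is close to $r$), and this is what your convergence estimate actually uses.
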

\begin{proof}
First, let us estimate~$I_0$. Being~$|t-s-r|<t-s+r$ we have
\begin{equation*}
I_0(t,r) \le 2 \int_0^t \< s\>^{-(p-1)}\int_{\max\{0,t-s-r\}}^{t-s+r} \<s+\rho\>^{-p}\, \<s-\rho\>^{-p(\kappa-1)} \<\rho\> \,d\rho\,ds.
\end{equation*}
Now we use the change of variables $\xi=s+\rho$, $\eta=\rho-s$. Since $\rho \ge 0$ we have $|\eta|\le \xi$. Moreover, $\xi=s+\rho\le s+(t-s-r)=t+r$ and
$\xi \ge s+\max\{ 0, t-s-r\}\geq (t-r)_+$. Finally, we arrive at
\begin{align}
\label{eq:change} \begin{array}{cr}
I_0(t,r) \lesssim \int_{(t-r)_+}^{t+r}\<\xi\>^{-p}\int_{-\xi}^{\xi} \<\eta+\xi\>\<\eta-\xi\>^{-(p-1)}\<\eta\>^{-p(\kappa-1)}\,d\eta\,d\xi\\
=\int_{(t-r)_+}^{t+r} \<\xi\>^{-p}I(\xi)\,d\xi \end{array}
\end{align}
with $I(\xi)$ as in Lemma~\ref{lem:Ixi}. From Lemma~\ref{lem:Ixi} we conclude
\begin{equation}\label{eq:I0final}
I_0(t,r) \lesssim \int_{(t-r)_+}^{(t+r)} \<\xi\>^{-\kappa}d\xi.
\end{equation}
In the following we shall use different ideas in different zones of the $(t,r)$ plane.

\subsubsection{The zone $t\ge 2r$}
Here, we have in $[(t-r), (t+r)]$ the equivalence $\<\xi\>\simeq \<t+r\>$, therefore,~$I_0(t,r)\lesssim
r\<t+r\>^{-\kappa}$.
\subsubsection{The zone $r\le 1$ and $t\le 2r$} In this zone it is $\<t+r\>\simeq 1$. It is enough to show $I_0(t,r)\lesssim r$, which follows
from~\eqref{eq:I0final} being~$\kappa\geq0$.
\subsubsection{The zone $r\ge 1$ and $r\leq t\le 2r$} If $r\le t\le 2r$, then from~\eqref{eq:I0final} we derive
\begin{equation*}
I_0(t,r) \lesssim \int_{t-r}^{t+r} \<\xi\>^{-\kappa} d\xi \lesssim \<t-r\>^{-(\kappa-1)},
\end{equation*}
where we used~$\kappa>1$.

Now, let us estimate~$I_0'$ for~$r\geq1$ and $t\leq r$. Applying the same change of variables to
\[ I_0'(t,r) = \int_0^t \< s\>^{-(p-1)}\int_{r-(t-s)}^{r+(t-s)} \<s+\rho\>^{-p}\, \<s-\rho\>^{-p(\kappa-1)} \<\rho\> \,d\rho\,ds \]
we obtain
\[ I_0'(t,r) \lesssim \int_{r-t}^{r+t}\<\xi\>^{-p}\int_{r-t}^{\xi} \<\eta+\xi\>\<\eta-\xi\>^{-(p-1)}\<\eta\>^{-p(\kappa-1)}\,d\eta\,d\xi\,.\]
Moreover, $[(r-t), \xi]\subset [-\xi,\xi]$. From Lemma \ref{lem:Ixi} we have
\begin{equation*}
I_0(t,r) \lesssim \int_{r-t}^{r+t}\<\xi\>^{-p}I(\xi)\,d\xi\lesssim \int_{r-t}^{r+t}\<\xi\>^{-\kappa}\,d\xi
\lesssim \<t-r\>^{1-\kappa},
\end{equation*}
where we used again~$\kappa>1$. Finally, we prove \eqref{eq:stimaL}. If $t\ge 2r$ or $r\le 1$, then from $\<t+r\>\ge \<t-r\>$ it follows
\begin{equation*}
|Lu(t,r)|\lesssim \<t+r\>^{-\kappa}\|u\|^p_{X_\kappa}\lesssim \<t+r\>\<t-r\>^{-(\kappa-1)} .
\end{equation*}
For $r\geq1$ and $t\le 2r$ we have
\begin{equation*}
|Lu(t,r)|\lesssim \<r\>^{-1}\,\<t-r\>^{-(\kappa-1)}\|u\|^p_{X_\kappa}\simeq \<t+r\>^{-1}\,\<t-r\>^{-(\kappa-1)}\|u\|^p_{X_\kappa}.
\end{equation*}
The same arguments lead to \eqref{eq:stimaLuLv}.
\end{proof}
\begin{prop}\label{thm:I1}
Let $p>p_0(5)$ and~$\kappa$ be as in~\eqref{eq:rangek}. One has
\begin{equation*}
I_{1,-}(t,r) \lesssim
\begin{cases}
\<t-r\>^{-\kappa} & \text{if $t\geq 2r$,} \\
\<t-r\>^{-(\kappa-1)} & \text{if $t\leq 2r$,}
\end{cases}
\end{equation*}
and~$I_{1,+}\lesssim \<t+r\>^{-\kappa}$. In particular, the estimates \eqref{eq:stimaL1} and~\eqref{eq:stima1LuLv} hold.
\end{prop}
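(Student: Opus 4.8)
The plan is to mirror the proof of Proposition~\ref{thm:I0}, exploiting the fact that in $I_{1,\pm}$ the sign of $t-s\pm r$ is (piecewise) fixed, so that one of the two light-cone weights $\<s+|t-s\pm r|\>$, $\<s-|t-s\pm r|\>$ collapses to a constant and can be pulled out of the $s$-integral; the remaining one-dimensional integral is then matched against $I(\xi)$ from Lemma~\ref{lem:Ixi} or estimated by hand.

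First I would treat $I_{1,+}$. Since $s\le t$ gives $t-s+r\ge 0$, one has $s+|t-s+r|=t+r$ identically, so $\<t+r\>^{-p}$ factors out and, after the substitution $\eta=2s-t-r$ (which sends $s-|t-s+r|$ to $\eta$, $\<s\>$ to $\<\eta+(t+r)\>$ and $\<t-s+r\>$ to $\<(t+r)-\eta\>$), the integral becomes a sub-integral over $[-(t+r),t-r]\subset[-(t+r),t+r]$ of the integrand defining $I(t+r)$, up to the symmetry $\eta\mapsto-\eta$. Lemma~\ref{lem:Ixi} then yields $I_{1,+}\lesssim \<t+r\>^{-p}\<t+r\>^{-(\kappa-p)}=\<t+r\>^{-\kappa}$, with no case distinction needed.

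For $I_{1,-}$ the argument $t-s-r$ changes sign, so I would split the $s$-integral at $s=(t-r)_+$. On the region $t-s-r\ge 0$ we have $s+|t-s-r|=t-r$ constant; pulling out $\<t-r\>^{-p}$ and substituting $\eta=2s-(t-r)$ identifies the remainder exactly with $I(t-r)$, giving the contribution $\<t-r\>^{-p}I(t-r)\lesssim\<t-r\>^{-\kappa}$. On the region $t-s-r\le 0$ it is instead $s-|t-s-r|=t-r$ that is constant, so $\<t-r\>^{-p(\kappa-1)}$ factors out and I am left with $\int \<s\>^{-(p-1)}\<2s+r-t\>^{-p}\<s+r-t\>\,ds$, which I would estimate directly after the shift $w=s-(t-r)$, distinguishing the subcases $t\ge 2r$ (where all arguments are comparable to $\<t-r\>$ and the lower bound $\kappa\ge(3-p)/(p-1)$, resp.\ $\kappa>1$ for $p\ge2$, closes the estimate to $\<t-r\>^{-\kappa}$) and $t\le 2r$ (where the remaining integral converges and leaves only the weaker power $\<t-r\>^{-(\kappa-1)}$). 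This reproduces exactly the two regimes of the statement.

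Finally, inserting $I_{1,+}\lesssim\<t+r\>^{-\kappa}$ and the case distinction for $I_{1,-}$ into $|\partial_r(rLu)|\lesssim\|u\|_{X_\kappa}^p\sum_\pm I_{1,\pm}$ yields \eqref{eq:stimaL1}: when $t\le 2r$ one uses $\<t+r\>\simeq\<r\>$, and when $t\ge2r$ one uses $\<t+r\>\simeq\<t-r\>$ together with $\kappa>1$, to absorb everything into $\<t+r\>^{-1}\<t-r\>^{-(\kappa-1)}\<r\>$; the companion estimate \eqref{eq:stima1LuLv} then follows identically, since $\partial_r(rLu-rLv)$ is controlled by the very same $I_{1,\pm}$ times $\|u-v\|_{X_\kappa}(\|u\|_{X_\kappa}^{p-1}+\|v\|_{X_\kappa}^{p-1})$. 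The main obstacle I anticipate is the region $t\le 2r$ near the cone $t\approx r$ for $I_{1,-}$: there only the weaker factor $\<t-r\>^{-p(\kappa-1)}$ is pulled out, and one must verify both that the residual $s$-integral converges (which forces $p>3/2$, guaranteed by $p>p_0(5)$) and that, after combining exponents, no power worse than $\<t-r\>^{-(\kappa-1)}$ survives — the lower bounds on $\kappa$ in~\eqref{eq:rangek} being precisely what makes this work, exactly as in the borderline analysis of Lemma~\ref{lem:Ixi}.
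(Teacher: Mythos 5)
Your proposal is correct and takes essentially the same approach as the paper's proof: the same splitting of $I_{1,-}$ at $s=(t-r)_+$ according to the sign of $t-s-r$, the same reduction of the favorable pieces to $I(\xi)$ of Lemma~\ref{lem:Ixi} via an affine substitution (your $\eta=2s-(t\pm r)$ versus the paper's $\eta=(t\pm r)/2-s$ is only a rescaling and reflection), and the same direct bound of the remaining piece by a constant using $2(p-1)>2(p_0(5)-1)>1$, with the identical zone distinction $t\geq 2r$ versus $t\leq 2r$ and the identical use of $\kappa\geq(3-p)/(p-1)$ (resp.\ $\kappa>1$ for $p\geq 2$). The concluding passage from $I_{1,\pm}$ to \eqref{eq:stimaL1} and \eqref{eq:stima1LuLv}, using $\<t+r\>\simeq\<r\>$ when $t\leq 2r$ and $\<t+r\>\simeq\<t-r\>$, $\<r\>\geq 1$ when $t\geq 2r$, also matches the paper.
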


\begin{proof}
We start with the estimate of $I_{1,-}$.\\
\subsubsection{The zone $t\ge 2r$} Since $t+r\simeq t-r$ and if $s\in[t-r,t]$, then
\begin{equation*}
s+|t-s-r|\simeq t-r.
\end{equation*}
Conversely, if $s\in [0,t-r]$, then
\begin{equation*}
s+|t-s-r|=s +t-s-r=t-r.
\end{equation*}
Therefore,
\[ I_{1,-} \lesssim \<t-r\>^{-p} \int_0^t \<s\>^{-(p-1)}\,\<s-|t-s-r|\>^{-p(\kappa-1)}\<t-s-r\>ds=  \<t-r\>^{-p} (Q_-+Q_+),\]
where
\begin{align*}
Q_- & =\int_0^{t-r} \<s\>^{-(p-1)}\,\<2s-t+r\>^{-p(\kappa-1)}\<t-s-r\>ds,\\
Q_+ & =\<t-r\>^{-p(\kappa-1)}\,\int_{t-r}^t \<s\>^{-(p-1)}\,\<t-s-r\>ds.
\end{align*}
We may directly estimate
\begin{align*}
Q_+&\le \<t-r\>^{-p(\kappa-1)-(p-1)}\,\int_{t-r}^t \<t-s-r\>d\tau = \<t-r\>^{-p(\kappa-1)-(p-1)}\,\int_{-r}^0 \<\rho\>d\rho \\
&\lesssim \<t-r\>^{-p(\kappa-1)-(p-1)+2}.
\end{align*}
Being $\kappa\geq\frac{3-p}{p-1}$ we have the required estimate $Q_+\lesssim \<t-r\>^{p-\kappa}$.

In order to estimate $Q_-$ we plan to use Lemma \ref{lem:Ixi}. By the change of variables $\eta=\frac{t-r}{2}-s$, we have
\begin{equation}\label{eq:Q-}
Q_- \lesssim \int_{-\frac{t-r}{2}}^{\frac{t-r}{2}} \Big\<\eta+\frac{t-r}{2}\Big\>^{-(p-1)}\,\<\eta\>^{-p(\kappa-1)}\Big\<\eta-\frac{t-r}{2}\Big\>d\eta=
I\left(\frac{t-r}{2}\right)\lesssim  \<t-r\>^{p-\kappa}.
\end{equation}
Together with the estimate of $Q_+$ this gives $I_{1,-}\lesssim \<t-r\>^{-\kappa}$.
\subsubsection{The zone $t\leq 2r$} We write $I_{1,-}=\tilde Q_++\tilde
Q_-$, where
\begin{align*}
\tilde Q_-
    & =\int_0^{(t-r)_+} \<s\>^{-(p-1)}\,\<t-r\>^{-p}\<2s-t+r\>^{-p(\kappa-1)}\<t-s-r\>ds\\
    & = \<t-r\>^{-p}\,Q_-, \\
\tilde Q_+
    & =\int_{(t-r)_+}^t \<s\>^{-(p-1)}\,\<2s-t+r\>^{-p}\<t-r)\>^{-p(\kappa-1)}\<t-s-r\>ds \\
    & = \<t-r\>^{-p(\kappa-1)}\,\int_{(t-r)_+}^t \<s\>^{-(p-1)}\,\<2s-t+r\>^{-p}\,\<t-s-r\>ds = \<t-r\>^{-p(\kappa-1)}\,Q_+^\sharp.
\end{align*}
Since estimate \eqref{eq:Q-} holds for any $t\ge r$ we may directly conclude $\tilde Q_-\lesssim \<t-r\>^{-\kappa}$.
Since $p>1$, in order to gain $\tilde Q_+\lesssim \<t-r\>^{-(\kappa-1)}$, it suffices to estimate $Q_+^\sharp$ by a constant.\\
Since~$2s-(t-r)\ge s-(t-r)$ we have
\begin{align*}
Q_+^\sharp &\lesssim \int_0^\infty \<s\>^{-(p-1)}\<s-(t-r)\>^{-(p-1)}\,ds\\
& \lesssim \int_0^{(t-r)/2} \<s\>^{-2(p-1)}\,ds+\int_{(t-r)/2}^{\infty} \<t-s-r\>^{-2(p-1)}\,ds\le 2 \int_0^{+\infty} \<s\>^{-2(p-1)}\,ds.
\end{align*}
This quantity is finite taking into consideration $2(p-1) > 2(p_0(5)-1) > 1$.

\medskip
The estimate for $I_{1,+}$ is simpler to obtain. Indeed
\begin{equation*}
I_{1,+} =\<t+r\>^{-p} \int_0^t \<s\>^{-(p-1)}\,\<2s-t-r\>^{-p(\kappa-1)}\<t-s+r\>ds,
\end{equation*}
due to~$t+r-s\geq0$. After the change of variables $\eta=\frac{t+r}{2}-s$ we are in position to apply Lemma \ref{lem:Ixi} and conclude
\begin{equation*}
I_{1,+}  \lesssim \<t+r\>^{-p} \int_{-\frac{t+r}{2}}^{\frac{t+r}{2}}
\Big\<\eta+\frac{t+r}{2}\Big\>^{-(p-1)}\,\<\eta\>^{-p(\kappa-1)}\Big\<\eta-\frac{t+r}{2}\Big\>d\eta= \<t+r\>^{-p} I\left(\frac{t+r}{2}\right) \lesssim
\<t+r\>^{-\kappa}\,.
\end{equation*}

\medskip
Now, we can easily gain \eqref{eq:stimaLuLv}, and similarly \eqref{eq:stima1LuLv}. If $t\ge 2r$, then we use $\<t+r\>\simeq\<t-r\>$ and $\<r\>\ge 1$ to
conclude
\begin{equation*}
|\p_r(rLu)(t,r)|  \lesssim \|u\|^p_{X_\kappa} \sum_{\pm}I_{1,\pm} \lesssim
\|u\|^p_{X_\kappa} \<t-r\>^{-\kappa} \lesssim \<r\> \<t+r\>^{-1}\<t-r\>^{-{\kappa-1}}\|u\|^p_{X_\kappa}\,.
\end{equation*}
If $t\le 2r$, then $\<r\>\simeq \<t+r\>$, hence,
\begin{equation*}
|\p_r(rLu)(t,r)|  \lesssim \|u\|^p_{X_\kappa} \sum_{\pm}I_{1,\pm} \lesssim
\|u\|^p_{X_\kappa} \<t-r\>^{-(\kappa-1)} \lesssim \<r\> \<t+r\>^{-1}\<t-r\>^{-{\kappa-1}}\|u\|^p_{X_\kappa}\,.
\end{equation*}
\end{proof}

\subsection{Existence theorem}

\begin{theorem}\label{thm:final}
Let $p>p_0(5)$ and~$\kappa$ as in~\eqref{eq:rangek}. There exists a constant $\varepsilon_0>0$ such that if~\eqref{eq:greg} holds with
$\varepsilon<\varepsilon_0$, then the Cauchy problem \eqref{eq:CPr} admits a unique global (in time) small data solution $u(t,r)$ in the sense of
Definition \ref{def:solnonl}. In particular, $u\in X_\kappa$ and the following decay estimate holds:
\begin{align}\label{eq:decaynonl}
|u(t,r)|+|\partial_r u(t,r)|&\lesssim \<t+|r|\>^{-1}\,\<t-|r|\>^{-(\kappa-1)}\,.
\end{align}
\end{theorem}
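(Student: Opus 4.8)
The plan is to realize the solution of \eqref{eq:CPr} as a fixed point of the map $\Phi[u]\doteq u^\lin+Lu$ on a small ball of $X_\kappa$, where $u^\lin$ is the solution of the linear problem \eqref{eq:CPrlin} and $L$ is the Duhamel operator of Proposition \ref{prop:Duh}. The point is that all the analytic effort has already been expended: Theorem \ref{thm:lin} gives $\|u^\lin\|_{X_\kappa}\le C\varepsilon$, while Theorem \ref{thm:nonl} furnishes the two nonlinear estimates \eqref{eq:basicLu} and \eqref{eq:basicLuLv}. Hence the argument collapses to a standard contraction scheme, and the only genuinely new tasks are to verify that the fixed point solves \eqref{eq:CPr} in the sense of Definition \ref{def:solnonl} and to read off the pointwise bound \eqref{eq:decaynonl} from the $X_\kappa$ norm.

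First I would fix the radius. For $u$ in the closed ball $B_M\doteq\{u\in X_\kappa:\ u\ \text{even in } r,\ \|u\|_{X_\kappa}\le M\}$, the triangle inequality together with \eqref{eq:basicLu} gives $\|\Phi[u]\|_{X_\kappa}\le C\varepsilon+C_1M^p$. Choosing $M\doteq 2C\varepsilon$ and then $\varepsilon$ small enough that $C_1(2C\varepsilon)^{p-1}\le 1/2$ (possible since $p>1$) forces $\|\Phi[u]\|_{X_\kappa}\le M$, so $\Phi$ maps $B_M$ into itself; I also use here that $\Phi$ preserves evenness in $r$, since both $u^\lin$ and $Lu$ are even by Theorem \ref{thm:lin} and Proposition \ref{prop:Duh}. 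Next, for $u,v\in B_M$ the Lipschitz estimate \eqref{eq:basicLuLv} yields $\|\Phi[u]-\Phi[v]\|_{X_\kappa}=\|Lu-Lv\|_{X_\kappa}\lesssim \|u-v\|_{X_\kappa}\bigl(\|u\|_{X_\kappa}^{p-1}+\|v\|_{X_\kappa}^{p-1}\bigr)\le C_2M^{p-1}\|u-v\|_{X_\kappa}$, and shrinking $\varepsilon$ once more makes the constant strictly less than $1$. Banach's fixed point theorem then produces a unique $u\in B_M$ with $u=u^\lin+Lu$ and $\|u\|_{X_\kappa}\le M\lesssim\varepsilon$.

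It remains to check that this $u$ is a solution in the sense of Definition \ref{def:solnonl}. Since $u^\lin$ solves \eqref{eq:CPr2lin} with data $g$ and, by Proposition \ref{prop:Duh}, $Lu$ solves \eqref{eq:CPr2} with zero data, superposition shows that $u$ satisfies $r^2u_{tt}-(r^2u_{rr}+2r\,u_r)=r^2\<t\>^{-(p-1)}|u|^p$ with $u(0,r)=0$, $u_t(0,r)=g(r)$; the regularity $u\in\mathcal C$ and $r^2u\in\mathcal C^2$ is inherited from the two summands. Uniqueness within $X_\kappa$ is the uniqueness of the fixed point, and through Remark \ref{Rem3} this radial solution corresponds to a solution of \eqref{eq:CPnew}, hence of \eqref{eq:CP2}.

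Finally, the decay estimate \eqref{eq:decaynonl} is extracted from $u\in X_\kappa$. The bound on $|u|$ is immediate from the first term of the norm. For $\partial_r u$ I would use the identity $r\,\partial_r u=\partial_r(ru)-u$, so that the two terms of $\|u\|_{X_\kappa}$ give $|r\,\partial_r u|\lesssim \<r\>\<t+|r|\>^{-1}\<t-|r|\>^{-(\kappa-1)}\|u\|_{X_\kappa}$; dividing by $|r|\simeq\<r\>$ settles the region $|r|\ge1$. The only delicate point is the region $|r|\le1$, where one cannot simply divide by $r$: here I would exploit that $u$ is even in $r$ with $r^2u\in\mathcal C^2$, so that $\partial_r u(t,0)=0$ and $\<t\pm|r|\>\simeq\<t\>$, to recover the same bound by continuity. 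I expect this near-origin bookkeeping to be the only mildly technical step, the substance of the theorem being entirely contained in the previously established estimates \eqref{eq:basicLu}--\eqref{eq:basicLuLv}.
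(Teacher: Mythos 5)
Your proposal is correct and follows essentially the same route as the paper: the paper likewise combines Theorem~\ref{thm:lin}, Theorem~\ref{thm:nonl} and Proposition~\ref{prop:Duh}, merely running the contraction as an explicit Picard iteration $u_0=u^\lin$, $u_{n+1}=u^\lin+Lu_n$ with an induction giving $\|u_n\|_{X_\kappa}\le 2C_0\varepsilon_0$ and $\|u_{n+1}-u_n\|_{X_\kappa}\le 2^{-n}\|u_1-u_0\|_{X_\kappa}$, instead of invoking Banach's fixed point theorem on a ball, which is an immaterial difference. One remark: you are actually more scrupulous than the paper about the bound for $\partial_r u$ in the region $|r|\le 1$ (the paper just says the decay follows from the definition of $X_\kappa$), but ``by continuity'' is not quite enough there; the clean argument is to differentiate the representations under the integral sign, $\partial_r u^\lin=\int_{-1}^1\sigma H_g'(t+r\sigma)\,d\sigma$ and $\partial_r Lu=\int_0^t\<s\>^{-(p-1)}\int_{-1}^1\sigma H_u[s]'(t-s+r\sigma)\,d\sigma\,ds$, and then use \eqref{eq:greg}, the bound on $H_u[s]'$ in \eqref{eq:basicH}, and the estimate $I_0(t,r)\lesssim r\,\<t+r\>^{-\kappa}$ of Proposition~\ref{thm:I0}, whose factor $r$ compensates the $1/r$ and supplies exactly what membership in $X_\kappa$ alone does not give near $r=0$.
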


\begin{proof}
Let us define the sequence
\begin{equation*}
u_0=u^\lin,\quad u_{n+1}=u^\lin+Lu_n.
\end{equation*}
By using Theorem \ref{thm:lin} and Theorem \ref{thm:nonl} we get
\begin{align*}
&\|u_{n+1}\|_{X_\kappa}\le \|u^\lin\|_{X_\kappa}+C_1\|u_n\|_{X_\kappa}^p \le C_0\varepsilon+ C_1\|u_n\|_{X_\kappa}^p\,,\\
& \|u_{n+1}-u_n\|_{X_\kappa} \le C_2 \|u_{n}-u_{n-1}\|_{X_\kappa}\left(\|u_{n}\|_{X_\kappa}^{p-1}+\|u_{n-1}\|_{X_\kappa}^{p-1}\right)
\end{align*}
with suitable constant $C_0,C_1,C_2>0$. For $\varepsilon_0<(2C_0C_1^{1/(p-1)})^{-1}$, via induction argument we find
\begin{equation*}
\|u_n\|_{X_\kappa}\le 2\|u^\lin\|_{X_\kappa}\le 2C_0\varepsilon_0.
\end{equation*}
In turn, for $\varepsilon_0<(2^{p+1}C_2 C_0^{p-1})$, this gives
\begin{equation*}
 \|u_{n+1}-u_n\|_{X_\kappa}\le 2^{-n}\|u_1-u_0\|_{X_\kappa}\,.
\end{equation*}
We can conclude that $\{u_n\}$  is a Cauchy sequence, it converges in $X_\kappa$ to the solution to $u=u^\lin+Lu$. According to Proposition \ref{prop:Duh}
this solution is the required one. The decay estimates follow from the definition of $X_\kappa$.
\end{proof}

\begin{remark}\label{rem:final}
From the decay estimate \eqref{eq:decaynonl} we may derive an estimate for the solution to the scale-invariant damping Cauchy problem \eqref{eq:CP2}.
Coming back, by the inverse Liouville transformation, we find
\begin{equation*}
|v(t,|x|)|\le \<t\>^{-1}\<t+|x|\>^{-1}\<t-|x|\>^{-(\kappa-1)}.
\end{equation*}
The worst situation is close to the light cone, where we only have
\begin{equation*}
|v(t,|x|)|\le \<t\>^{-2}.
\end{equation*}
The decay behavior $\<t\>^{-2}$ in the $3$-dimensional case can be interpreted as $\<t\>^{-\frac{(n+2)-1}2}$: the same decay for the wave equation in
dimension $n+2$. This motivates the $2$-dimensions shift of the critical exponent $p_0(n) \to p_0(n+2)$.
\end{remark}

\section{Expectations for~$\mu\neq2$}

The same type of transformation we used in the treatment of~\eqref{eq:CP2} allows us to transform the Cauchy problem with scale-invariant mass and
dissipation
\begin{equation}\label{eq:CPgen}
\begin{cases}
v_{tt}-\triangle v + \frac\mu{1+t}\,v_t + \frac{m}{4(1+t)^2}\,v = |v|^p, & t\geq0, \quad x\in\R^n,\\
v(0,x)=v_0(x), & x\in\R^n,\\
v_t(0,x)=v_1(x), & x\in\R^n,
\end{cases}
\end{equation}
where~$\mu>0$ and~$m\in\R$, into
\begin{equation}\label{eq:CPgennew}
\begin{cases}
u_{tt}-\triangle u + \frac{\mu(2-\mu)+m}{4(1+t)^2} u= \<t\>^{-\frac\mu2(p-1)}|u|^p, & t\geq0, \quad x\in\R^n,\\
u(0,x)=u_0(x), & x\in\R^n,\\
u_t(0,x)=u_1(x), & x\in\R^n,
\end{cases}
\end{equation}
where we set~$u(t,x)=\<t\>^{\frac\mu2}v(t,x)$, $u_0=v_0$ and~$u_1=v_1+(\mu/2)v_0$.

In particular, in the special case~$m=(\mu-2)\mu$, the equation in~\eqref{eq:CPgennew} becomes a wave equation with the
nonlinearity~$\<t\>^{-\frac{\mu}{2}(p-1)}|u|^p$. We may directly follow the proof of Theorem~\ref{th:mu2} to obtain a nonexistence result for this special
problem.
\begin{theorem}\label{th:mu}
Assume that~$v\in\mathcal{C}^2([0,T)\times \R^n)$ is a solution to~\eqref{eq:CPgen} with~$m=(\mu-2)\mu$ and initial data $(v_0,v_1)\in \mathcal C_c^2(\R^n)\times \mathcal C_c^1(\R^n)$ such that $v_1, v_0\geq0$, and $(v_0,v_1)\not \equiv (0,0)$. If~$p\in(1, \tilde p_\mu(n)]$, then~$T<\infty$, where
\[ \tilde p_\mu(n) = \max \left\{ p_\infty(n-1+\mu/2) , p_0(n+\mu) \right\}. \]
\end{theorem}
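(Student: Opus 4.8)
The plan is to reduce Theorem~\ref{th:mu} to the blow-up machinery already built for Proposition~\ref{prop:non}. First I apply the change of variables $u(t,x)=\<t\>^{\mu/2}v(t,x)$ carrying \eqref{eq:CPgen} into \eqref{eq:CPgennew}. The decisive point is that the choice $m=(\mu-2)\mu$ annihilates the mass coefficient, since $\mu(2-\mu)+m=\mu(2-\mu)+(\mu-2)\mu=0$; hence $u$ solves the \emph{free} wave equation with a purely time-weighted nonlinearity,
\[ u_{tt}-\triangle u=\<t\>^{-\frac\mu2(p-1)}|u|^p, \]
with data $u_0=v_0\ge0$ and $u_1=v_1+(\mu/2)v_0\ge0$ (both nonnegative because $\mu>0$). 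This is exactly problem~\eqref{eq:wave20} with the weight $\<t\>^{-(p-1)}$ replaced by $\<t\>^{-\frac\mu2(p-1)}$, so the whole argument of Proposition~\ref{prop:non} transfers once this single exponent is tracked.

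Next I repeat the functional steps with $F(t)=\int_{\R^n}u(t,x)\,dx$. Finite speed of propagation and H\"older's inequality now yield
\[ \ddot F(t)\gtrsim \<t\>^{-(n+\mu/2)(p-1)}\,|F(t)|^p, \]
that is \eqref{eq:F2est} with $q=(n+\mu/2)(p-1)$. The positivity argument via $\psi_1$ and Lemma~\ref{Lem:2YZ} is unchanged apart from the extra factor $\<t\>^{-\frac\mu2(p-1)}$, giving the analogue of \eqref{eq:F2bis},
\[ \ddot F(t)\gtrsim \<t\>^{\beta},\qquad \beta\doteq -\frac\mu2(p-1)+(n-1)\Bigl(1-\frac p2\Bigr), \]
and, after integrating twice, $F(t)\gtrsim \<t\>^{\max\{\beta+2,1\}}$. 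The heart of the matter is then purely algebraic: the two conditions triggering the first part of Lemma~\ref{Lem:1YZ}, namely $1>(q-2)/(p-1)$ and $\beta+2>(q-2)/(p-1)$, simplify respectively to $p<p_\infty(n-1+\mu/2)$ and, after clearing denominators, to $(n+\mu-1)p^2-(n+\mu+1)p-2<0$, which is precisely $p<p_0(n+\mu)$. This reproduces the subcritical range $p<\tilde p_\mu(n)=\max\{p_\infty(n-1+\mu/2),p_0(n+\mu)\}$.

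It remains to treat the critical value $p=\tilde p_\mu(n)$ by mimicking the two critical schemes of Proposition~\ref{prop:non}. When the Fujita exponent dominates, the borderline estimate $\ddot F(t)\gtrsim\<t\>^{-1}$ integrates to the logarithmic gain $F(t)\gtrsim\<t\>\ln\<t\>$, after which the first part of Lemma~\ref{Lem:1YZ} closes the argument exactly as in the one- and two-dimensional cases. When the Strauss exponent dominates and $C^2$ regularity is lost, I pass to spherical means and the Radon transform $Ru$ as in the $n\ge3$ computation. Here the identity $\frac{n+1}2p^2-\frac{n+3}2p=1$ used at $p=p_0(n+2)$ is replaced by $(n+\mu-1)p^2-(n+\mu+1)p=2$, which is exactly the Strauss equation in dimension $n+\mu$; one then checks that the weighted integral again produces the logarithmic factor needed to invoke the second part of Lemma~\ref{Lem:1YZ}.

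I expect the main obstacle to lie precisely in this Strauss-critical case. The Radon-transform manipulation relies on the pointwise inequality $r^{(n-1)/p}\ge r^{(n-1)/2}\rho^{(n-1)(1/p-1/2)}$, valid only for $p\le2$, and on the lower bound $Ru(t,\rho)\gtrsim\<t-\rho\>^{\beta+1}$, valid only for $\beta\ge-1$, i.e. $p\le(2n+\mu)/(n+\mu-1)$. Since $p_0(n+\mu)\le2$ exactly when $n+\mu\ge4$, the delicate point is to verify that whenever the Strauss exponent genuinely dominates in $\tilde p_\mu(n)$ these range restrictions are automatically satisfied, and to handle separately any residual low-dimensional configuration in which $C^2$ regularity fails while $p_0(n+\mu)>2$.
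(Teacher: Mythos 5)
Your subcritical argument coincides with the paper's own proof: the same reduction $u=\<t\>^{\mu/2}v$ (the choice $m=(\mu-2)\mu$ gives $\mu(2-\mu)+m=0$), the same two lower bounds for $\ddot F$ with $q=(n+\mu/2)(p-1)$, the same positivity of $F_1$ via Lemma~\ref{Lem:2YZ}, and the same appeal to the first part of Lemma~\ref{Lem:1YZ}, with the two conditions reducing to $p<p_\infty(n-1+\mu/2)$ and $p<p_0(n+\mu)$. In fact your exponent is the correct one: the paper's displayed value $a=-\frac{n-1+2\mu}2\,p+n+2$ contains a slip, and it is your $\beta+2=n+1+\frac{\mu}2-\frac{n-1+\mu}2\,p$ that makes the inequality $a>(q-2)/(p-1)=n+\frac{\mu}2-\frac2{p-1}$ equivalent to $(n+\mu-1)p^2-(n+\mu+1)p-2<0$, i.e.\ to $p<p_0(n+\mu)$; one can cross-check against \eqref{eq:Straussbirth} by setting $\mu=2$.

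The divergence is in the critical case $p=\tilde p_\mu(n)$: the paper does not prove it at all (its proof states ``for the sake of brevity we only consider the subcritical case''), whereas you attempt it. Your Fujita-critical branch is sound: when $p_\infty(n-1+\mu/2)\geq p_0(n+\mu)$ one has $a=1$ and $q=p+1$, hence $\ddot F\gtrsim\<t\>^{-1}$, $F\gtrsim\<t\>\ln\<t\>$, and the second part of Lemma~\ref{Lem:1YZ} closes the argument exactly as in the $1$d and $2$d cases of Proposition~\ref{prop:non}. But in the Strauss-critical branch your hope that the range restrictions ``are automatically satisfied whenever the Strauss exponent genuinely dominates'' fails: the Strauss exponent strictly dominates for $n=2$, $\mu\in(0,2)$ and for $n=3$, $\mu\in(0,1)$, and in both regimes $n+\mu<4$, so $p_0(n+\mu)>2$ and the Radon-transform chain (which needs $p\leq2$, both for \eqref{eq:Ruest}-type integration and for the pointwise inequality $r^{(n-1)/p}\geq r^{(n-1)/2}\rho^{(n-1)(1/p-1/2)}$) breaks down; moreover, the naive bootstrap $\ddot F\gtrsim\<t\>^{-q}F^p$ produces no logarithmic gain there, since the exponent $-1$ appears only when $q=p+1$. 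So this branch remains genuinely open in your proposal --- but it is equally open in the paper, which simply omits it while asserting the closed interval $p\in(1,\tilde p_\mu(n)]$ in the statement. Net assessment: your proof establishes everything the paper's proof establishes, and somewhat more, with the remaining gap honestly and correctly located.
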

\begin{remark}
Let us compute~$\tilde p_\mu(n)$. The critical exponent~$p_0(n+\mu)$ may be written as
\[ p_0(n+\mu)= \frac{q+\sqrt{q^2+4(q-1)}}{2}, \qquad \text{where} \quad q=q(n+\mu)= 1 + \frac2{n-1+\mu} = p_\infty(n-1+\mu). \]
%
To determine~$\tilde p_\mu(n)$ we remark that~$p_0(n+\mu)\geq r\doteq p_\infty(n-1+\mu/2)$ if and only if
\[ \sqrt{q^2+4(q-1)} \geq 2r - q.  \]
Being~$q<r$ for any~$\mu>0$ we may take the squared powers:
\[ q^2+4(q-1) \geq 4r^2-4rq+q^2, \]
that is, $q-1 \geq r(r-q)$, explicitly,
\[ \frac{2}{n-1+\mu} \geq \frac{n+1+\mu/2}{n-1+\mu/2} \left(\frac{2}{n-1+\mu/2}-\frac2{n-1+\mu}\right) = \frac{\mu(n+1+\mu/2)}{(n-1+\mu/2)^2(n-1+\mu)}, \]
that is,
\[ \mu(n-3) + 2(n-1)^2 \geq 0. \]
It follows that~$\tilde{p}_\mu(n)=p_0(n+\mu)$ for any~$n\geq3$, $\tilde p_\mu(1)=p_\infty(\mu/2)$, and
\[ \tilde p_\mu(2) = \begin{cases}
p_\infty(1+\mu/2) & \text{if~$\mu\geq2$,}\\
p_0(2+\mu) & \text{if~$\mu\in[0,2]$.}
\end{cases} \]
\end{remark}
The statements of Theorem~\ref{th:mu} are consistent with known results for the classical semi-linear wave equation (i.e. $\mu=0$) and with Theorem~\ref{th:mu2} (i.e.
$\mu=2$), which are the only two cases in which~$m=0$.
\begin{proof}[Proof of Theorem~\ref{th:mu}]
We only sketch the differences to the proof of Theorem~\ref{th:mu2}. For the sake of brevity we only consider the subcritical case. It is clear that we
obtain
\begin{align*}
\ddot F(t)
    & \gtrsim \<t\>^{-(n+\mu/2)(p-1)}|F(t)|^o, \\
\ddot F(t)
    & \gtrsim \<t\>^{-(n-1+\mu/2)(p-1)+(n-1)\frac{p}2}\,|F_1(t)|^p.
\end{align*}
By virtue of Lemma~\ref{Lem:2YZ} we derive again~$F_1(t)>0$, and integrating twice the estimate for~$\ddot F(t)$, we derive~$F(t)\gtrsim \<t\>^a$, where
\[ a = \max \left\{-\frac{n-1+2\mu}2\,p+n+2, 1\right\}. \]
Setting~$q=(n+\mu/2)(p-1)$ we immediately obtain the blow-up in finite time if~$1>(q-2)/(p-1)$, i.e. $p<p_\infty(n-1+\mu/2)$, or if
\[ -\frac{n-1+2\mu}2\,p+n+2 > \frac{q-2}{p-1} = n+\frac\mu2 -\frac2{p-1}, \]
i.e.~$p<p_0(n+\mu)$.
\end{proof}
We may conjecture that global existence of small data solutions holds for some range of~$p>\tilde p_\mu(n)$. For this reason we propose as critical exponent $p_{crit}$
the value $\tilde p_\mu(n)$.
\begin{remark}
Let~$\mu\in(0,2)$ in~\eqref{eq:CP}. We may expect that the critical exponent $p_\mu(n)$ is not larger than~$\tilde p_\mu(n)$ due to the fact that the model in~\eqref{eq:CPgen}
with~$m=(\mu-2)\mu$ has an additional negative mass term with respect to the model in~\eqref{eq:CP}. Moreover, we know that the critical exponent has to be not smaller
than~$p_\infty(n-(1-\mu)_+)$. Therefore, we expect that
\[ p_\infty(n-(1-\mu)_+) \leq p_\mu(n) \leq \tilde p_\mu(n). \]
If~$n\geq2$, then we may replace~$\tilde p_\mu(n)=p_0(n+\mu)$, whereas if~$n=1$, we replace~$\tilde p_\mu(1)=p_\infty(\mu/2)$, i.e. we expect that
$p_\mu(1)\in [1+2/\mu,1+4/\mu]$. Indeed, if~$n=1$ our considerations are restricted to~$\mu\in(0,5/3)$, since we already know that the critical exponent
is~$3$ for~$\mu\geq 5/3$.

On the other hand, if~$n\geq3$ and $\mu\in(2,n+2)$ in~\eqref{eq:CP}, then we may expect that the critical exponent $p_\mu(n)$ is not smaller than~$\tilde p_\mu(n)$, due to the fact that the model in~\eqref{eq:CPgen} with~$m=(\mu-2)\mu$ has an additional positive mass term with respect to the model in~\eqref{eq:CP}. Moreover, we know that the critical exponent may not be smaller than~$p_\infty(n)$. Therefore, we expect that
\[ \max \left\{ p_0(n+\mu), p_\infty(n)\right\} \leq p_\mu(n) \leq p_0(n+2). \]
\end{remark}
\section{Concluding remarks and open problems}
\begin{remark}
In the statement of Theorem~\ref{Thm:ex} we may relax the assumption on the data from $(\bar v_0, \bar v_1)\in\mathcal{C}^2_c\times\mathcal{C}^1_c$
to~$(\bar v_0, \bar v_1)\in H^2\times H^1$, compactly supported.
\end{remark}
\begin{remark}
In the paper \cite{DAL14} the first two authors deal with the
odd dimensional cases $n \geq 5$. They prove the global existence of small data solutions to (\ref{eq:CP2}) for some admissible range of $p \in (p_0(n+2),p_1)$.
This yields together with the statement from Theorem \ref{th:mu2}, that $p_0(n+2)$ is the critical exponent for (\ref{eq:CP2}) in odd space dimensions $n \geq 5$, too.
It remains to analyze the case of even $n \geq 4$. But the authors expect the {\it shift $p_0(n) \to p_0(n+2)$ in all space dimensions $n \geq 4$}, too.
\end{remark}

\end{document}